\newtheorem{theorem}{Theorem}[section]
\newtheorem{corollary}[theorem]{Corollary}
\newtheorem{lemma}[theorem]{Lemma}
\newtheorem{proposition}[theorem]{Proposition}
\newtheorem{conjecture}{Conjecture}
\theoremstyle{definition}
\newtheorem{definition}[theorem]{Definition}
\newtheorem{remark}[theorem]{Remark}
\providecommand{\skp}[2]{\langle#1,#2\rangle}
\newcommand{\vecIII}[3]{ 
\ensuremath{
\begin{pmatrix}
#1 \\ #2 \\ #3 \\
\end{pmatrix}}}
\newcommand{\matIII}[9]{
\ensuremath{ 
\begin{pmatrix}  
#1 & #2 & #3\\
#4 & #5 & #6\\
#7 & #8 & #9 
\end{pmatrix}}}
\providecommand{\sm}{\setminus}
\providecommand{\N}{\mathbb{N}}
\providecommand{\R}{\mathbb{R}}
\providecommand{\Z}{\mathbb{Z}}
\providecommand{\C}{\mathbb{C}}
\providecommand{\cN}{\mathcal{N}}
\providecommand{\eps}{\varepsilon}
\providecommand{\skp}[2]{\langle#1,#2\rangle}
\providecommand{\les}{\lesssim}
\DeclareMathOperator{\dist}{dist}
\renewcommand{\qed}{\hfill $\Box$}
\author{Rainer Mandel}
\email{rainer.mandel@kit.edu}
\author[Robert Schippa]{Robert Schippa*}
\email{robert.schippa@kit.edu}
\address{Department of Mathematics, Karlsruhe Institute of Technology, Englerstrasse 2, 76131 Karlsruhe, Germany}
\begin{document}
\title[Solutions for anisotropic Maxwell's equations]{Time-harmonic Solutions for Maxwell's equations in anisotropic media and Bochner--Riesz estimates with negative index for non-elliptic surfaces}

\begin{abstract}
  We solve time-harmonic Maxwell's equations in anisotropic, spatially homogeneous media in intersections of
$L^p$-spaces. The material laws are time-independent. The analysis requires Fourier restriction--extension estimates for perturbations of Fresnel's wave surface. 
This surface can be decomposed into finitely many components of the following three types: smooth surfaces with non-vanishing Gaussian curvature, smooth surfaces with
Gaussian curvature vanishing along one-dimensional submanifolds but without flat points, and surfaces with conical singularities. Our estimates are based on new Bochner--Riesz estimates
with negative index for non-elliptic surfaces.
\end{abstract}

\subjclass[2020]{Primary: 42B37, Secondary: 35Q61.}
\keywords{Maxwell's equations, time-harmonic solutions, Bochner-Riesz estimates of negative index}
\thanks{*Corresponding author}

\maketitle

\section{Introduction}

The purpose of this article is to prove the existence of solutions to the time-harmonic Maxwell's equations
and estimating the solutions (electromagnetic fields) in terms of the input data (currents) in $L^p$-spaces. Let
$(\mathcal{E},\mathcal{H}): \R \times \R^3 \to \R^3 \times \R^3$ denote the \emph{electric and magnetic
field}, $(\mathcal{D},\mathcal{B}): \R \times \R^3 \to \R^3 \times \R^3$ the \emph{displacement field} and \emph{magnetic induction}, and $(\mathcal{J}_e,\mathcal{J}_m): \R \times \R^3 \to \R^3 \times \R^3$ the \emph{electric and magnetic current}. Maxwell's equations in the absence of charges are given by
\begin{equation}
\label{eq:3dMaxwell}
\left\{ \begin{array}{cl}
\partial_t \mathcal{D} &= \nabla \times \mathcal{H} - \mathcal{J}_e, \quad \nabla \cdot \mathcal{D} = \nabla \cdot \mathcal{B} = \nabla \cdot \mathcal{J}_e = \nabla \cdot \mathcal{J}_m = 0, \\
\partial_t \mathcal{B} &= - \nabla \times \mathcal{E} + \mathcal{J}_m, \quad (t,x) \in \R \times \R^3.
\end{array} \right.
\end{equation}

We suppose that displacement and magnetic field are related with electric field and magnetic induction through time-independent and spatially homogeneous material laws. This leads to supplementing \eqref{eq:3dMaxwell} with
\begin{equation}
\label{eq:MaterialLaws}
\mathcal{D}(t,x) = \varepsilon \mathcal{E}(t,x), \quad \mathcal{B}(t,x) = \mu \mathcal{H}(t,x) , \quad \varepsilon \in \R^{3 \times 3}, \; \mu \in \R^{3 \times 3}.
\end{equation}
$\varepsilon$ is referred to as \emph{permittivity}, and $\mu$ is referred to as \emph{permeability}. Permittivity and permeability are positive-definite in classical physical applications. We suppose in the following that $\varepsilon$ and $\mu$ are diagonal matrices and write
\begin{equation}
\label{eq:DiagonalPermittivityPermeability}
\varepsilon = \text{diag}(\varepsilon_1,\varepsilon_2,\varepsilon_3), \quad \mu =
\text{diag}(\mu_1,\mu_2,\mu_3), \quad \varepsilon_i,\mu_j > 0.
\end{equation}
Maxwell's equations are invariant under change of basis, i.e., the transformations $X'(t,x) = MX(t,M^t x)$
for the involved vector fields with $M \in SO(3)$, and time-parity symmetry $(t,x) \to (-t,-x)$. Hence, the
more general case when $\varepsilon$ and $\mu$ are commuting positive-definite matrices,  or
equivalently: simultaneously orthogonally diagonalizable, reduces to
\eqref{eq:DiagonalPermittivityPermeability}. For physical explanations, we refer to
\cite{FeynmanLeightonSands1964,LandauLifschitz1990}.  The assumption $\nabla \cdot \mathcal{D} = 0$
corresponds to the absence of electrical charges and
$\nabla \cdot \mathcal{B} = 0$ translates to the absence of magnetic monopoles.
 Due to conservation of charges, the currents are likewise divergence-free. Since magnetic monopoles are hypothetical, $\mathcal{J}_m$ is vanishing for most applications. Here, we consider the more general case, which will highlight symmetry between $\mathcal{E}$ and $\mathcal{H}$. In this paper we
focus on the fully anisotropic case
\begin{equation}\label{eq:FullAnisotropy}
  \frac{\eps_1}{\mu_1}\neq \frac{\eps_2}{\mu_2}\neq \frac{\eps_3}{\mu_3} \neq \frac{\eps_1}{\mu_1}.
\end{equation}

Upon considering the time-harmonic, monochromatic ansatz
\begin{equation}
\label{eq:TimeHarmonicAnsatz}
\left\{ \begin{array}{cl}
\mathcal{D}(t,x) &= e^{i \omega t} D(x), \quad \mathcal{B}(t,x) = e^{i \omega t} B(x), \\
\mathcal{J}_e(t,x) &= e^{i \omega t} J_{e}(x), \quad \mathcal{J}_m(t,x) = e^{i \omega t} J_{m}(x)
\end{array} \right.
\end{equation}
with $(D,B): \R^{3} \to \R^3 \times \R^3$, $(J_{e},J_{m}) : \R^3 \to \R^3 \times \R^3$ divergence-free, \eqref{eq:3dMaxwell} becomes
\begin{equation*}
\left\{ \begin{array}{cl}
i \omega D &= \nabla \times H - J_{e}, \quad \nabla \cdot J_{e} = \nabla \cdot J_{m} = 0, \\
i \omega B &= - \nabla \times E + J_{m}.
\end{array} \right.
\end{equation*}
With \eqref{eq:MaterialLaws} we arrive at the equations
\begin{equation}
\label{eq:TimeHarmonicEquationsII}
\left\{ \begin{array}{cl}
\nabla \times E + i \omega \mu H &= J_m, \quad \nabla \cdot J_m = \nabla \cdot J_e = 0, \\
\nabla \times H - i \omega \varepsilon E &= J_e.
\end{array} \right.
\end{equation}
Below $W^{m,p}(\R^d)$ denotes the $L^p$-based Sobolev space defined by
\begin{equation*}
W^{m,p}(\R^d) = \{ f \in L^p(\R^d) \, : \; \partial^\alpha f \in L^p(\R^d) \text{ for all } \alpha \in \N_0^d, \, |\alpha| \leq m \}.
\end{equation*}
 We prove the following:
\begin{theorem}
\label{thm:LpLqEstimatesTimeHarmonicMaxwell}
Let $1 \leq p_1, p_2, q \leq \infty$, $\eps,\mu\in\R^3$ as in 
\eqref{eq:DiagonalPermittivityPermeability},\eqref{eq:FullAnisotropy}
and $(J_e,J_m) \in L^{p_1}(\R^3) \cap L^{p_2}(\R^3)$ divergence-free. If
\begin{align}
\label{eq:ConditionsOnpq}
  \begin{aligned}
&\qquad \frac{1}{p_1} > \frac{3}{4}, \quad \frac{1}{q} < \frac{1}{4}, \quad \frac{1}{p_1} - \frac{1}{q} \geq \frac{2}{3}, \\
 &\text{ and } 0 \leq \frac{1}{p_2} - \frac{1}{q} \leq \frac{1}{3}, \quad (p_2,q) \notin \{(1,1), (3,\infty), (\infty,\infty) \},
 \end{aligned}
\end{align}
then, for any given $\omega\in\R\sm\{0\}$, there exists a distributional time-harmonic solution to fully
anisotropic Maxwell's equations \eqref{eq:TimeHarmonicEquationsII} that satisfies
\begin{equation}
\label{eq:ResolventEstimates}
\| (E,H) \|_{L^q(\R^3)} \lesssim_{p,q,\omega} \| (J_e,J_m) \|_{L^{p_1}(\R^3) \cap L^{p_2}(\R^3)}
\end{equation}
with locally uniform dependence with respect to $\omega \in \R \backslash \{0\}$.

If additionally $J_e,J_m\in L^q(\R^3)$, $q<\infty$, then $E,H\in W^{1,q}(\R^3)$
is a weak solution satisfying 
$$
 \|(E,H)\|_{W^{1,q}(\R^3)} \lesssim_{p,q,\omega}  \| (J_e,J_m) \|_{L^{p_1}(\R^3) \cap L^q(\R^3)}.
$$
\end{theorem}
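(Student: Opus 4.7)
The plan is to reduce the problem to Fourier multiplier estimates on an explicit family of surfaces and then invoke the Bochner–Riesz machinery announced earlier in the paper. First I would take the Fourier transform of system \eqref{eq:TimeHarmonicEquationsII} to obtain the algebraic system
\begin{equation*}
  M(\xi)\begin{pmatrix}\hat E\\\hat H\end{pmatrix}=\begin{pmatrix}\hat J_e\\\hat J_m\end{pmatrix},\qquad M(\xi)=\begin{pmatrix}-i\omega\eps & i\xi\times\\ i\xi\times & i\omega\mu\end{pmatrix},
\end{equation*}
whose determinant, up to a non-vanishing factor, is the Fresnel polynomial $P(\xi)$. The zero set $\{P=0\}$ is Fresnel's wave surface $\mathcal F_\omega$; by full anisotropy \eqref{eq:FullAnisotropy} it decomposes into the three geometric pieces advertised in the abstract. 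Using the divergence-free conditions on $J_e,J_m$, one can restrict to the orthogonal complements of the Fourier modes annihilated by the formal symbol and produce an explicit matrix-valued symbol $m(\xi)$ such that $(E,H)=m(D)(J_e,J_m)$ is a tempered distributional solution provided the singularity of $m$ on $\mathcal F_\omega$ is properly regularised (limiting absorption: take $\omega\pm i\eta$, $\eta\downarrow 0$, and interpret the boundary values in a Banach space large enough to contain $L^q$).

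Next I would perform a dyadic frequency decomposition of $m(D)$. Away from $\mathcal F_\omega$ and from infinity, the symbol is smooth and compactly supported, so the contribution is harmless. For large $|\xi|$, elliptic considerations combined with the extra factor of $|\xi|^{-1}$ in $m(\xi)$ give the claimed $L^{p_2}\to L^q$ bound with loss one derivative, explaining the wider range $0\le 1/p_2-1/q\le 1/3$ and the Stein–Tomas-like exclusion of the endpoints $(1,1),(3,\infty),(\infty,\infty)$. The delicate contribution is the piece supported on a neighbourhood of $\mathcal F_\omega$. Localising to each of the three geometric components, I would write the associated truncated multiplier as an oscillatory integral of Bochner–Riesz type with negative index, parametrising the normal direction to $\mathcal F_\omega$ and using $m\sim\operatorname{dist}(\cdot,\mathcal F_\omega)^{-1}$ near the surface.

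The main obstacle is controlling this last piece at the conical singularities and along the curves of vanishing Gaussian curvature. On the smooth curved part the classical Stein–Tomas / Carbery–Kenig–Ziesler estimates suffice; on the piece with curvature vanishing on $1$-dimensional submanifolds the recently developed anisotropic decoupling / restriction theory provides the correct loss; at conical singularities curvature degenerates in two directions, and this is exactly where the stronger hypothesis $1/p_1>3/4$ and $1/p_1-1/q\ge 2/3$ is needed. These are precisely the Bochner–Riesz estimates with negative index for non-elliptic surfaces that the paper develops in the preceding sections, and I would quote them as black boxes, checking only that the localisation of $\mathcal F_\omega$ around each singular point is, after an affine change of coordinates, a perturbation of the model surface covered by those theorems. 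Summing over the finitely many components yields \eqref{eq:ResolventEstimates}, with the locally uniform dependence on $\omega$ following from the scaling $\mathcal F_\omega=\omega\mathcal F_1$.

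For the $W^{1,q}$ upgrade, given that $E,H,J_e,J_m\in L^q$ the equations \eqref{eq:TimeHarmonicEquationsII} already yield $\nabla\times E,\nabla\times H\in L^q$, while the divergence-free conditions on $J_e,J_m$ combined with $\nabla\cdot(\eps E)=0$ and $\nabla\cdot(\mu H)=0$ (obtained by taking divergences in \eqref{eq:TimeHarmonicEquationsII}) provide a weighted divergence constraint. After the diagonal rescaling $x_j\mapsto\eps_j^{-1/2}x_j$ the pair (weighted curl, weighted divergence) becomes an elliptic first-order system of Hodge type, and Calderón–Zygmund theory gives $\nabla E,\nabla H\in L^q$ for $1<q<\infty$ with the stated norm bound; the case $q=1$ is excluded by the hypothesis $q<\infty$ together with $1/q<1/4$. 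This completes the argument.
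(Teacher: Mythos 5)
Your high-level decomposition is essentially the paper's: split the symbol into a non-resonant part (Mikhlin plus Bessel-potential estimates, giving the $L^{p_2}\cap L^q$ intersection) and a resonant part near Fresnel's surface, which is further split according to the three curvature regimes and estimated via the Bochner--Riesz theorems with negative index. However, the regularization you propose is exactly the one the paper rejects. You suggest the classical limiting absorption $\omega\mapsto\omega\pm i\eta$, but the paper regularizes by $p(\omega,\xi)+i\delta$ and explicitly states that a classical LAP is not proved here. The obstruction is concrete: on the characteristic surface one has
$\partial_\omega p(\omega,\xi)=2\omega^3\bigl(q_0(\xi)-2\omega^2\bigr)=\mp 2\omega^3\sqrt{q_0(\xi)^2-4q_1(\xi)}$,
with opposite signs on the inner and outer sheets, so $\operatorname{Im} p(\omega+i\eta,\xi)\approx\eta\,\partial_\omega p(\omega,\xi)$ is not sign-definite across the surface; worse, at the four singular points, where the sheets meet, the discriminant $q_0^2-4q_1$ vanishes, so $\partial_\omega p=0$ and the linearization in $\eta$ degenerates. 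This is precisely where your argument relies on the conical Bochner--Riesz estimate, so the step ``regularize and pass to the limit uniformly'' cannot be run as written. Replacing $\omega+i\eta$ by $p(\omega,\xi)+i\delta$ is not cosmetic: it pins $|\operatorname{Im}|\equiv\delta$ globally and makes the foliation-by-level-sets argument uniform.

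A second gap is the passage from the regularized multiplier to an actual solution of the first-order system~\eqref{eq:TimeHarmonicEquationsII}. Inverting $M_E(\xi)-\omega^2$ and $M_H(\xi)-\omega^2$ by Cramer's rule only produces solutions of the second-order block system, and the symmetrizer relating the two has a nontrivial kernel; so one has to verify by explicit computation (as the paper does, using cancellations among the adjugate matrices $Z_{\eps,\mu}$ and $Z_{\mu,\eps}$) that $ib(\xi)\hat E_\delta+i\omega\mu\hat H_\delta=\hat J_m-\tfrac{i\delta}{p+i\delta}\hat J_m$ and the analogous identity for the second equation, and only then send $\delta\to 0$. Your outline simply asserts the solution property, which is exactly where the restriction to divergence-free data and the kernel of the symbol have to be reconciled.

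On the positive side, your $W^{1,q}$ upgrade via the weighted div--curl system ($\nabla\times E=J_m-i\omega\mu H\in L^q$, $\nabla\cdot(\eps E)=0$, diagonal rescaling, Calder\'on--Zygmund) is a genuinely different argument from the paper's, which instead notes that $\|\cdot\|_{W^{1,q}}\lesssim\|\cdot\|_{L^q}$ trivially on the compactly supported frequency blocks and that the symbol gains a derivative at high frequency. Your route requires $1<q<\infty$, which here holds automatically ($q>4$), and does yield the stated bound after using the first part of the theorem with $p_2=q$; it is a valid alternative, though less in keeping with the paper's multiplier-side viewpoint.
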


We shall see that the Fourier multiplier derived by inverting \eqref{eq:TimeHarmonicEquationsII} for
$\omega\in \R$ is not well-defined in the sense of distributions. A common regularization is to consider
$\omega \in \C \backslash \R$ and derive estimates independent of $\text{dist}(\omega, \R)$. This program
was carried out in our previous works \cite{CossettiMandel2020,ResolventEstimates2d}, which were concerned with
isotropic, possibly inhomogeneous, respectively, partially anisotropic, but homogeneous media. The necessity of considering $(J_e,J_m)$
within intersections of $L^p$-spaces and the connection with resolvent estimates for the Half-Laplacian was
discussed in~\cite{ResolventEstimates2d}. In the present work we need to regularize
differently due to a more complicated behaviour of the involved Fourier symbols with respect to the change
$\omega\mapsto \omega+i\eps$. In other words, we do not prove a Limiting Absorption Principle in the
classical sense.

In the proof we will reduce the analysis to the case $\mu_1 = \mu_2 = \mu_3=1$ as in~\cite{Liess1991} in order
to simplify the notation. We will justify this step in Section~\ref{section:FresnelSurface}.
In the partially anisotropic case $\varepsilon_1 = \varepsilon_2 \neq \varepsilon_3$ the
matrix-valued Fourier multiplier associated with Maxwell's equations can be diagonalized easily and a
combination of Riesz transform estimates and resolvent estimates for the Half-Laplacian are used to prove
uniform bounds. In our fully anisotropic case~\eqref{eq:FullAnisotropy} 
this does not work at all. Instead of diagonalizing the symbol, we take the more direct approach of
inverting the matrix Fourier multiplier associated with~\eqref{eq:TimeHarmonicEquationsII}. Taking the Fourier
transform in $\R^3$, denoting with $\xi \in \R^3$ the dual variable of $x \in \R^3$, and the vector-valued
Fourier transform of $E$ with $\hat{E}$, likewise for the other vector-valued quantities, we find
that~\eqref{eq:TimeHarmonicEquationsII} is equivalent to
\begin{equation}
\label{eq:TimeHarmonicEquationsFourier}
\left\{ \begin{array}{cl}
i b(\xi) \hat{E}(\xi) + i \omega \mu \hat{H}(\xi) &= \hat{J}_m, \quad \xi \cdot \hat{J}_m = \xi \cdot \hat{J}_e = 0, \\
i b(\xi) \hat{H}(\xi) - i \omega \varepsilon \hat{E}(\xi) &= \hat{J}_e.
\end{array} \right.
\end{equation}
In the above display, we denote
\begin{equation*}
(\nabla \times f) \widehat (\xi) = i b(\xi) \hat{f}(\xi), \quad b(\xi) = 
\begin{pmatrix}
0 & -\xi_3 & \xi_2 \\
\xi_3 & 0 & -\xi_1 \\
-\xi_2 & \xi_1 & 0
\end{pmatrix}
.
\end{equation*}
In the first step, we use the block structure to show that solutions to
\eqref{eq:TimeHarmonicEquationsFourier} solve the following two $3 \times 3$-systems of second order:
\begin{proposition}
\label{prop:ReductionTimeHarmonicMaxwell}
If $(E,H) \in \mathcal{S}'(\R^3)^2$ solve \eqref{eq:TimeHarmonicEquationsFourier}, then the following holds true: \begin{equation}
\label{eq:ReductionFourierMaxwell}
\left\{ \begin{array}{cl}
(M_E(\xi) - \omega^2) \hat{E} &= - i \omega \varepsilon^{-1} \hat{J}_e + i \varepsilon^{-1} b(\xi) \mu^{-1} \hat{J}_m, \\
(M_H(\xi) - \omega^2) \hat{H} &= i \mu^{-1} b(\xi) \varepsilon^{-1} \hat{J}_e + i \omega \mu^{-1} \hat{J}_m.
\end{array} \right.
\end{equation}
Here,
\begin{align*}
M_E(\xi) = - \varepsilon^{-1} b(\xi) \mu^{-1} b(\xi), \qquad M_H(\xi) = - \mu^{-1} b(\xi) \varepsilon^{-1} b(\xi).
\end{align*}
\end{proposition}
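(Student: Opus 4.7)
The proof is a direct elimination, so the plan is to decouple the first-order $6\times 6$ system \eqref{eq:TimeHarmonicEquationsFourier} into two $3\times 3$ second-order systems, one for $\hat E$ and one for $\hat H$, by using one equation to express one field in terms of the other and then substituting into the remaining equation. Since $\omega\neq 0$ and $\mu,\varepsilon$ are invertible (by \eqref{eq:DiagonalPermittivityPermeability}), all the inversions needed are legitimate pointwise in $\xi$.

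For the equation governing $\hat E$, I would solve the first line of \eqref{eq:TimeHarmonicEquationsFourier} for $\hat H$, obtaining
\[
\hat H(\xi) \;=\; \frac{1}{i\omega}\,\mu^{-1}\bigl(\hat J_m(\xi)-i b(\xi)\hat E(\xi)\bigr),
\]
and insert this into the second line $i b(\xi)\hat H - i\omega\varepsilon\hat E = \hat J_e$. After multiplying through by $i\omega$ and then by $\varepsilon^{-1}$, the terms in $\hat E$ combine to $\bigl(\varepsilon^{-1}b(\xi)\mu^{-1}b(\xi)+\omega^{2}\bigr)\hat E = (\omega^{2}-M_E(\xi))\hat E$ times $(-1)$, and rearranging yields exactly the first identity in \eqref{eq:ReductionFourierMaxwell}. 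For the equation governing $\hat H$, the symmetric procedure applies: solve the second line for $\hat E$, substitute into the first, multiply by $i\omega$ and then by $-\mu^{-1}$, which gives the second identity. The manipulation is literally symmetric under $(\hat E,\hat H,\varepsilon,\mu,\hat J_e,\hat J_m)\leftrightarrow (\hat H,\hat E,\mu,\varepsilon,-\hat J_m,-\hat J_e)$ up to signs coming from $i\omega\mu$ vs.\ $-i\omega\varepsilon$, so after verifying one of the two displays carefully, the other follows by this symmetry.

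There is no real obstacle here; the step that needs attention is only keeping track of the signs and of the side on which $\varepsilon^{-1}$ or $\mu^{-1}$ appears, since $\varepsilon^{-1}$ and $b(\xi)$ do not commute in the fully anisotropic regime \eqref{eq:FullAnisotropy}. In particular, I would emphasize that the two multipliers $M_E$ and $M_H$ are \emph{not} similar in general (they agree only up to conjugation by $\varepsilon^{-1}\mu$), which is why the reduction naturally produces two genuinely different second-order operators rather than a single scalar one. The divergence conditions $\xi\cdot\hat J_e=\xi\cdot\hat J_m=0$ are not used in this proposition; they are preserved automatically under multiplication by $b(\xi)$ only because $b(\xi)\xi=0$, but this observation belongs to the later analysis of the inverse multiplier, not to the elimination step itself. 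Working in $\mathcal S'(\R^3)$ poses no issue since \eqref{eq:TimeHarmonicEquationsFourier} is a pointwise (in $\xi$) algebraic identity between tempered distributions.
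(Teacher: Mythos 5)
Your elimination is correct and is, up to presentation, the same argument the paper gives: the paper packages your two substitutions into a single left-multiplication of the $6\times 6$ block system by the symmetrizer matrix \eqref{eq:Diagonalizer}, whose block entries are exactly the coefficients you apply when you solve one equation and insert it into the other, so multiplying out that product reproduces your computation line by line. One side remark you should correct: $M_E$ and $M_H$ do not ``agree up to conjugation by $\varepsilon^{-1}\mu$''. Writing $A=-\varepsilon^{-1}b(\xi)$ and $B=\mu^{-1}b(\xi)$ one has $M_E=AB$ and $M_H=BA$, which always share the same characteristic polynomial — this is precisely the reason $\det(M_E-\omega^2)=\det(M_H-\omega^2)$ in \eqref{eq:Determinant} — but since $b(\xi)$ is singular, $AB$ and $BA$ need not be conjugate, and conjugation by the diagonal matrix $\varepsilon^{-1}\mu$ does not intertwine them because diagonal matrices (other than scalars) do not commute with $b(\xi)$. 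This is tangential to the proof, which is otherwise fine.
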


The proof of the proposition follows from rewriting~\eqref{eq:TimeHarmonicEquationsFourier} as
\begin{equation*}
\begin{pmatrix}
-i \omega \varepsilon & i b(\xi) \\
i b(\xi) & i \omega \mu
\end{pmatrix}
\begin{pmatrix}
\hat{E} \\ \hat{H}
\end{pmatrix}
= 
\begin{pmatrix}
\hat{J}_{e} \\ \hat{J}_{m}
\end{pmatrix}
\end{equation*}
and multiplying this equation with  
\begin{equation}
\label{eq:Diagonalizer}
\begin{pmatrix}
-i \omega \varepsilon^{-1} & i \varepsilon^{-1} b(\xi) \mu^{-1} \\
i \mu^{-1} b(\xi) \varepsilon^{-1} & i \omega \mu^{-1}
\end{pmatrix}
. 
\end{equation}
Notice, however, that~\eqref{eq:TimeHarmonicEquationsFourier} and~\eqref{eq:ReductionFourierMaxwell}
are not equivalent because the symmetrizer~\eqref{eq:Diagonalizer} has a nontrivial kernel. A lengthy, but
straight-forward computation reveals
\begin{equation}
\label{eq:Determinant}
  p(\omega,\xi)
  := \det (M_E(\xi) - \omega^2) = \det(M_H(\xi) - \omega^2) = - \omega^2 (\omega^4 - \omega^2 q_0(\xi) + q_1(\xi)),
\end{equation}
where
\begin{align*}
q_0(\xi) &= \xi_1^2 \big( \frac{1}{\varepsilon_2 \mu_3} + \frac{1}{\mu_2 \varepsilon_3} \big) + \xi_2^2 \big( \frac{1}{\varepsilon_1 \mu_3} + \frac{1}{\mu_1 \varepsilon_3} \big) + \xi_3^2 \big( \frac{1}{\varepsilon_1 \mu_2} + \frac{1}{\varepsilon_2 \mu_1} \big), \\
q_1(\xi) &= \frac{1}{\varepsilon_1 \varepsilon_2 \varepsilon_3 \mu_1 \mu_2 \mu_3} (\varepsilon_1 \xi_1^2 + \varepsilon_2 \xi_2^2 + \varepsilon_3 \xi_3^2) (\mu_1 \xi_1^2 + \mu_2 \xi_2^2 + \mu_3 \xi_3^2).
\end{align*}
In the case $\mu_1=\mu_2=\mu_3>0$ this corresponds to~\cite[Eq.~(1.4)]{Liess1991} by Liess.

\medskip

From Proposition~\ref{prop:ReductionTimeHarmonicMaxwell} we infer that solutions to anisotropic Maxwell's
equations can be found provided that the mapping properties of the Fourier multiplier with symbol
$p^{-1}(\omega,\xi)$ or, actually, an adequate regularization of this, can be controlled. The first step of
this analysis is to develop a sound understanding of the geometry of 
$S:=\{\xi\in\R^3: p(\omega,\xi)=0\}$, with an emphasis on its principal curvatures. This has essentially been
carried out by Darboux~\cite{Darboux1993} and Liess~\cite[Appendix]{Liess1991}. We devote
Section~\ref{section:FresnelSurface} to recapitulate these facts along with some computational
details that were omitted in~\cite{Liess1991}.  
$S$ is known as Fresnel's wave surface, which was previously described, e.g., in \cite{Darboux1993,Liess1991,Knoerrer1986,FladtBaur1975}. We refer
to Figure~\ref{fig:Fresnel} for visualizations. Despite its seemingly complicated structure, 
this surface can be perceived as non-smooth deformation of the doubly covered sphere in $\R^3$. For the involved algebraic computations we provide a MAPLE\textsuperscript{TM} sheet for verification.

\medskip

We turn to a discussion of the regularization of $p(\omega,\xi)^{-1}$. Motivated by Cramer's rule, we
multiply~\eqref{eq:ReductionFourierMaxwell} with the adjugate matrices and divide by $p(\omega,\xi) + i
\delta$. This leads us to approximate solutions $(E_\delta,H_\delta)$. We postpone the precise
definition to Section~\ref{section:Reductions}. 
The main part of the proof of Theorem~\ref{thm:LpLqEstimatesTimeHarmonicMaxwell} is then to show uniform
bounds in $\delta \neq 0$:
\begin{equation*}
\| (E_\delta,H_\delta) \|_{L^q(\R^3)} \lesssim \| (J_e,J_m) \|_{L^{p_1}(\R^3) \cap L^{p_2}(\R^3)}
\end{equation*}
for $q$, $p_1$, $p_2$ as in Theorem~\ref{thm:LpLqEstimatesTimeHarmonicMaxwell}. In
Section~\ref{section:Reductions} we shall see how this allows us to infer the existence of distributional solutions to~\eqref{eq:TimeHarmonicEquationsII} and how the limits can be understood as
principal value distribution and delta distribution for Fresnel's wave surface in Fourier space. Moreover, the distributional solutions are weak solutions provided that the currents have sufficiently high integrability.

\medskip

We point out the connection to Bochner-Riesz operators of negative index and seemingly digress for a moment to explain key points for these operators. For $0 < \alpha <1$, consider the Bochner-Riesz operator with negative index given by
\begin{equation}
\label{eq:BochnerRieszNegative}
S^\alpha f (x) =  \frac{C_d}{\Gamma(1-\alpha)} \int_{\R^d} e^{ix.\xi} (1-|\xi|^2)^{-\alpha}_+ \hat{f}(\xi) d\xi.
\end{equation}
$C_d$ denotes a dimensional constant, $\Gamma$ denotes the Gamma function, and $x_+ = \max(x,0)$. For $1 \leq \alpha \leq (d+1)/2$, $S^\alpha$ is explained by analytic continuation. The body of literature concerned with Bochner-Riesz estimates with negative index is huge, see, e.g., \cite{Sogge1986,Boerjeson1986,Gutierrez2000,ChoKimLeeShim2005,KwonLee2020}. In Section \ref{section:BochnerRieszEstimates} we give a more exhaustive overview.
For $\alpha = 1$, we find
\begin{equation*}
S^\alpha f (x ) = C_d \int_{\mathbb{S}^{d-1}} e^{i x. \xi} \hat{f}(\xi) d\sigma(\xi) = C_d \int_{\R^d} e^{ix.\xi} \delta(|\xi|^2 - 1) \hat{f}(\xi) d\xi
\end{equation*}
because the distribution in \eqref{eq:BochnerRieszNegative} for $\alpha = 1$ coincides with the delta
distribution up to a factor. Estimates for such Fourier restriction-extension operators are the
backbone of the Limiting Absorption Principle for the Helmholtz equation (cf. \cite{Gutierrez2004}). It
turns out that we need more general Fourier restriction-extension estimates than the ones associated with
elliptic surfaces because the Gaussian curvature of the Fresnel surface $S$ changes sign, as we shall see
in Section~\ref{section:FresnelSurface}. We take the opportunity to prove estimates for generalized
Bochner-Riesz operators of negative index for non-elliptic surfaces as the associated Fourier restriction-extension operators will be important in the proof of Theorem \ref{thm:LpLqEstimatesTimeHarmonicMaxwell}. 

\medskip

To describe our results in this direction, let $d \geq 3$ and $S = \{ (\xi',\psi(\xi')) : \, \xi' \in
[-1,1]^{d-1} \}$ be a smooth surface with $k \in \{1,\ldots,d-1 \}$ principal curvatures bounded from below. The case $d=2$ was disclosed by Bak \cite{Bak1997} and Guti\'errez \cite{Gutierrez2000}. Let
\begin{equation*}
(T^\alpha f) \widehat (\xi) = \frac{1}{\Gamma(1-\alpha)} \frac{\chi(\xi')}{(\xi_d - \psi(\xi'))_+^\alpha}
\hat{f}(\xi), \; \chi \in C^\infty_c([-1,1]^{d-1}), \; 0 < \alpha < \frac{k+2}{2}.
\end{equation*}
In the following theorem, we show $L^p$-$L^q$-bounds
\begin{equation}
\label{eq:GeneralizedBochnerRieszEstimate}
\| T^\alpha f \|_{L^q(\R^d)} \lesssim \| f \|_{L^p(\R^d)}.
\end{equation}
 within a pentagonal region (see Figure~\ref{fig:Riesz1})
\begin{equation*}
( \frac{1}{p}, \frac{1}{q} ) \in \text{conv}^0 (C_{\alpha,k}, B_{\alpha,k},B'_{\alpha,k},C_{\alpha,k}',A ), \qquad A:=(1,0).
\end{equation*}
For $0<\alpha< \frac{k+2}{2}$, let
\begin{equation}
\label{eq:RangeGeneralizedBochnerRiesz}
\mathcal{P}_{\alpha}(k) = 
\left\{ (x,y) \in [0,1]^2 : \, x > \frac{k+2\alpha}{2(k+1)}, \; y < \frac{k+2-2\alpha}{2(k+1)}, \; x - y \geq
\frac{2\alpha}{k+2} \right\}.
\end{equation}
For two points $X$, $Y \in [0,1]^2$, let 
\begin{align*}
[X,Y] &= \{ Z \in [0,1]^2 \, : \, Z = \lambda X + (1-\lambda)Y \text{ for some } \lambda \in [0,1] \}, \\
\text{ and } (X,Y] &= [X,Y] \backslash \{X\}, \; [X,Y) = [X,Y] \backslash \{Y \}, \; (X,Y) = [X,Y] \backslash \{X,Y\}.
\end{align*}

At its inner endpoints $B_{\alpha,k}$, $B'_{\alpha,k}$, we show restricted weak bounds
\begin{equation}
\label{eq:WeakTypeBoundIII}
\| T^\alpha f \|_{L^{q,\infty}(\R^d)} \lesssim \| f \|_{L^{p,1}(\R^d)},
\end{equation}
and on part of its boundary, we show weak bounds
\begin{align}
\label{eq:WeakTypeBound}
\| T^\alpha f \|_{L^q(\R^d)} &\lesssim \| f \|_{L^{p,1}(\R^d)}, \\
\label{eq:WeakTypeBoundII}
\| T^\alpha f \|_{L^{q,\infty}(\R^d)} &\lesssim \| f \|_{L^p(\R^d)}.
\end{align}
\begin{figure}
\centering
\begin{tikzpicture}[scale=0.6] 
\draw[->] (0,0) -- (13,0); \draw[->] (0,0) -- (0,13);
\draw (0,0) --(12,12); \draw(0,12) -- (12,12); \draw (12,12) -- (12,0);
\coordinate (E) at (6,3);
\coordinate [label=left:$\frac{k}{2(k+2)}$] (EX) at (0,3);
\coordinate [label=above:$B_{\alpha_2,k}$] (B) at (7,2.5);
\coordinate [label=above:$B'_{\alpha_2,k}$] (B') at (9.5,5);
\coordinate [label=below:$C_{\alpha_2,k}$] (C) at (7,0);
\coordinate [label=right:$C'_{\alpha_2,k}$] (C') at (12,5);
\coordinate [label=above:$B_{\alpha_1,k}$] (U) at (5,3);
\coordinate [label=above:$B'_{\alpha_1,k}$] (U') at (9,7);
\coordinate [label=below:$C_{\alpha_1,k}$] (V) at (5,0);
\coordinate [label=right:$C'_{\alpha_1,k}$] (V') at (12,7);
\coordinate [label=below:$A$] (XX) at (12,0);
\coordinate [label=left:$\frac{1}{2}$] (Y) at (0,6);
\coordinate [label=left:$1$] (YY) at (0,12);
\coordinate [label=below:$\frac{1}{2}$] (X) at (6,0);
\coordinate [label=left:$\frac{1}{q}$] (YC) at (0,9);
\coordinate [label=below:$\frac{1}{p}$] (XC) at (9,0);
\draw [dotted] (EX) -- (E); \draw [dotted] (E) -- (X); \draw [dotted] (E) -- (XX); \draw [dotted] (XX) -- (YY);
\draw [help lines] (C) -- (B); \draw (B) -- (B');
\draw [help lines] (B') -- (C'); \draw [help lines] (U) -- (V); \draw [help lines] (U') -- (V');
\draw (U) -- (U');
\foreach \point in {(C),(C'),(B),(B'),(U),(U'),(V),(V'),(XX)}
\fill [black, opacity = 1] \point circle (3pt);
\end{tikzpicture}
 \caption{Riesz diagram for Theorem~\ref{thm:GeneralizedBochnerRieszEstimates} with 
  $\alpha_1 < \frac{1}{2} < \alpha_2$.
 }
  \label{fig:Riesz1}
\end{figure}
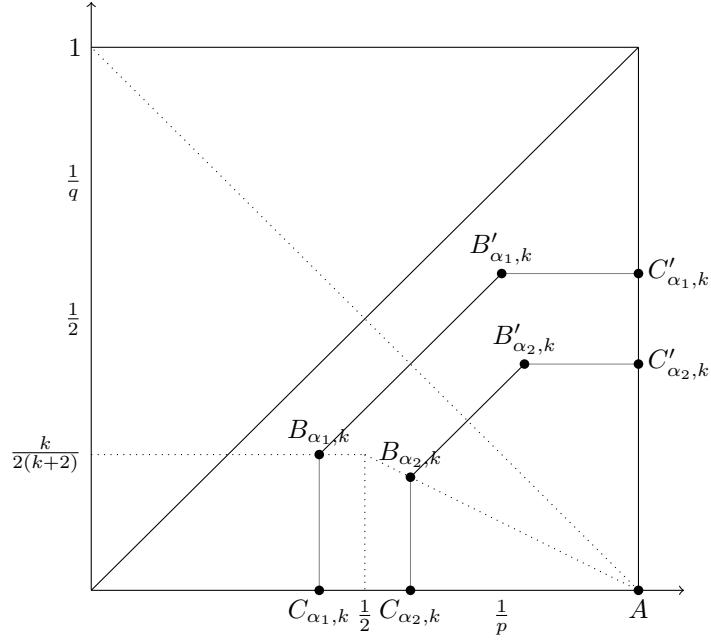

\begin{theorem}
\label{thm:GeneralizedBochnerRieszEstimates}
Let $1 \leq p, q \leq \infty$ and $d\in\N,d\geq 3$. 
\begin{itemize}
\item[(i)] For $\frac{1}{2} \leq \alpha < \frac{k+2}{2}$ let
\begin{align*}
B_{\alpha,k} &= \big( \frac{k+2\alpha}{2(k+1)}, \frac{k(k+2-2\alpha)}{2(k+1)(k+2)} \big), &C_{\alpha,k} = \big( \frac{k+2\alpha}{2(k+1)}, 0 \big), \\
B_{\alpha,k}' &= \left( \frac{ k^2 + 2(2+\alpha)k +4}{2(k+1)(k+2)}, \frac{k+2-2\alpha}{2(k+1)} \right), &C_{\alpha,k}' = (1, \frac{k+2-2\alpha}{2(k+1)}).
\end{align*}
\eqref{eq:GeneralizedBochnerRieszEstimate} holds true for $(\frac{1}{p},\frac{1}{q}) \in \mathcal{P}_{\alpha}(k)$ defined in
\eqref{eq:RangeGeneralizedBochnerRiesz}.\\
For $\alpha > \frac{1}{2}$, we find estimates \eqref{eq:WeakTypeBound} to hold for $(\frac{1}{p},\frac{1}{q})
\in (B_{\alpha,k}, C_{\alpha,k}]$; \eqref{eq:WeakTypeBoundII} for $(\frac{1}{p}, \frac{1}{q}) \in
(B'_{\alpha,k},C_{\alpha,k}']$, and \eqref{eq:WeakTypeBoundIII} for $(\frac{1}{p},\frac{1}{q}) \in \{
B_{\alpha,k}, B'_{\alpha,k} \}$.
\item[(ii)] For $0<\alpha < \frac{1}{2}$ let
\begin{align*}
B_{\alpha,k} &= \big( \frac{d-1+2\alpha}{2d}, \frac{k}{2(2+k)} \big), &C_{\alpha,k} = \big( \frac{d-1+2\alpha}{2d}, 0 \big), \\ \quad B_{\alpha,k}' &= \big( \frac{4+k}{2(2+k)}, \frac{d+1-2\alpha}{2d} \big), &C_{\alpha,k}' = \big(1, \frac{d+1-2\alpha}{2d} \big).
\end{align*}
\eqref{eq:GeneralizedBochnerRieszEstimate} holds true for
\begin{equation*}
\frac{1}{p} > \frac{d-1+2\alpha}{2d}, \quad \frac{1}{q} < \frac{d+1-2\alpha}{2d}, \quad \frac{1}{p} - \frac{1}{q} \geq \frac{2(d-1+2\alpha) + k (2\alpha-1)}{2d(2+k)}.
\end{equation*}
Furthermore, we find estimates \eqref{eq:WeakTypeBound} to hold for $(\frac{1}{p},\frac{1}{q}) \in
(B_{\alpha,k}, C_{\alpha,k}]$; \eqref{eq:WeakTypeBoundII} for $(\frac{1}{p}, \frac{1}{q}) \in
(B'_{\alpha,k},C_{\alpha,k}']$, and \eqref{eq:WeakTypeBoundIII} for $(\frac{1}{p},\frac{1}{q}) \in \{
B_{\alpha,k}, B'_{\alpha,k} \}$.
\end{itemize}
For any $\alpha$ the constant in \eqref{eq:GeneralizedBochnerRieszEstimate}-\eqref{eq:WeakTypeBoundII} depends on the lower bounds of
the principal curvatures and $\| \chi \|_{C^N}$ and $\| \psi \|_{C^N}$ for $N=N(p,q,d)$. In particular it is
stable under smooth perturbations of $\chi$ and $\psi$.
\end{theorem}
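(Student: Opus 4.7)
The plan is to perform a dyadic Littlewood--Paley decomposition of the singular Fourier multiplier in the distance from the surface $S$, bound each dyadic piece by combining Plancherel with stationary phase and a Stein--Tomas type adjoint restriction estimate on the thickened surface, and then interpolate and sum. Concretely, fix $\phi \in C_c^\infty((\tfrac12,2))$ with $\phi_0(t) + \sum_{j \geq 1} \phi(2^j t) \equiv 1$ on $(0,\infty)$ and decompose $T^\alpha = T^\alpha_0 + \sum_{j \geq 1} T^\alpha_j$ where
\begin{equation*}
\widehat{T^\alpha_j f}(\xi) = \frac{1}{\Gamma(1-\alpha)} \phi\bigl(2^j (\xi_d - \psi(\xi'))\bigr) (\xi_d - \psi(\xi'))^{-\alpha} \chi(\xi') \hat f(\xi),
\end{equation*}
so that the multiplier of $T^\alpha_j$ is supported in a $2^{-j}$-tube about $S$ with amplitude $\sim 2^{j\alpha}$; the tail $T^\alpha_0$ is a smooth compactly supported Fourier multiplier and is harmless.

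For each $T^\alpha_j$ I would derive three fundamental bounds:
\begin{equation*}
\|T^\alpha_j\|_{L^2 \to L^2} \lesssim 2^{j\alpha}, \quad \|T^\alpha_j\|_{L^1 \to L^\infty} \lesssim 2^{j(\alpha-1)}, \quad \|T^\alpha_j\|_{L^{p_0} \to L^{p_0'}} \lesssim 2^{j(\alpha-1)},
\end{equation*}
where $p_0 = \tfrac{2(k+2)}{k+4}$ is the Stein--Tomas exponent for surfaces with $k$ non-vanishing principal curvatures (in the spirit of Greenleaf). The first is Plancherel; the second is the trivial bound $\|K_j\|_\infty \leq (2\pi)^{-d} \|m_j\|_{L^1} \lesssim 2^{j(\alpha - 1)}$, with $m_j$ the multiplier and $K_j$ its convolution kernel. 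The third is the adjoint restriction estimate for the $2^{-j}$-thickened surface: the change of variable $\eta_d = 2^j(\xi_d - \psi(\xi'))$ factors $K_j$ as a Schwartz factor $\hat\phi(-2^{-j} x_d)$ (concentrated in $|x_d| \lesssim 2^j$) times the Fourier transform of the surface measure $\chi \, d\sigma_S$, and stationary phase in $\xi'$ using the $k$ non-vanishing eigenvalues of $\mathrm{Hess}(\psi)$ yields the $\langle x_d\rangle^{-k/2}$-decay driving Stein--Tomas.

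Real interpolation between these bounds yields $L^p \to L^q$ bounds on each $T^\alpha_j$ with strict geometric decay $2^{-jc(p,q)}$, $c(p,q) > 0$, throughout the open pentagon $\mathcal{P}_\alpha(k)$, so that summation produces \eqref{eq:GeneralizedBochnerRieszEstimate}. The dichotomy $\alpha \geq 1/2$ versus $\alpha < 1/2$ in the statement reflects which interpolation vertex is binding: the Stein--Tomas exponent $p_0$ controls the pentagon in (i), whereas for $\alpha < 1/2$ the multiplier becomes locally integrable and a Hardy--Littlewood--Sobolev type bound in the transverse fractional-integration direction takes over, producing the different, $d$-dependent pentagon in (ii).

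The main obstacle will be the critical diagonal $\tfrac1p - \tfrac1q = \tfrac{2\alpha}{k+2}$, where $c(p,q)$ vanishes and only the restricted weak-type bound \eqref{eq:WeakTypeBoundIII} at the vertices $B_{\alpha,k}, B'_{\alpha,k}$ can survive. I would obtain this via Bourgain's summation trick, testing $T^\alpha$ against characteristic functions and combining the off-diagonal strong-type bounds on $T^\alpha_j$ slightly above and below the critical line with an atomic decomposition of Lorentz spaces. The weak-type bounds \eqref{eq:WeakTypeBound}, \eqref{eq:WeakTypeBoundII} on the adjacent boundary segments $(B_{\alpha,k}, C_{\alpha,k}]$, $(B'_{\alpha,k}, C'_{\alpha,k}']$ then follow by Marcinkiewicz real interpolation between these endpoint restricted weak-type bounds and the strong-type bounds along the horizontal and vertical sides of the pentagon.
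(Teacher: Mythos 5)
Your framework (dyadic decomposition near $S$, three anchor bounds, interpolation, Bourgain's summation trick, self-duality) matches the paper's in outline, but the specific decomposition you chose makes your $L^1\to L^\infty$ anchor irreparably weak, and as a consequence the interpolation cannot reach the claimed pentagon. You take $\phi$ compactly supported in $(1/2,2)$, so the multiplier of $T^\alpha_j$ is compactly supported in $\{\xi_d-\psi(\xi')\sim 2^{-j}\}$. The price is that $\widehat{\phi}(0)\neq 0$: the kernel factors as $K_j(x)\sim 2^{j\alpha}\cdot 2^{-j}\,\widehat{\tilde\phi}(2^{-j}x_d)\cdot a(x)$ with $a(x)=\int e^{i(x'\cdot\xi'+x_d\psi(\xi'))}\chi(\xi')\,d\xi'$, so $|K_j(0)|\sim 2^{j(\alpha-1)}$ and the bound $\|T^\alpha_j\|_{L^1\to L^\infty}\lesssim 2^{j(\alpha-1)}$ that you state is actually \emph{sharp} for your decomposition — curvature cannot improve it, because the kernel peaks at $x=0$ where the stationary-phase decay of $a$ has not set in. The paper decomposes instead via \cite[Lemma 2.1]{ChoKimLeeShim2005}, which produces a $\phi_\alpha$ whose \emph{Fourier transform} is compactly supported in $|\tau|\sim 1$; then $K_\delta$ is supported in $|x_d|\sim\delta^{-1}$, so $|a|\lesssim\langle x_d\rangle^{-k/2}\lesssim\delta^{k/2}$ applies everywhere on the support and yields $\|T^\alpha_j\|_{L^1\to L^\infty}\lesssim 2^{j(\alpha-\frac{k+2}{2})}$. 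This curvature-improved anchor is what makes the scheme work. Concretely: for $1<\alpha<\frac{k+2}{2}$ all three of your anchors carry a strictly positive power of $2^j$ ($\alpha$, $\alpha-1$, $\alpha-1$), so no convex combination is even summable; and for $\alpha\leq 1$ the critical interpolation locus is the segment from $\big(\tfrac{1+\alpha}{2},\tfrac{1-\alpha}{2}\big)$ to $\big(\tfrac{k+2+2\alpha}{2(k+2)},\tfrac{k+2-2\alpha}{2(k+2)}\big)$, which does not contain $B_{\alpha,k}$, so the region you can prove strong bounds on is a proper subset of $\mathcal{P}_\alpha(k)$ and misses the claimed endpoints.

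There is a second, independent gap for part (ii). The paper's argument there does not rely on a ``Hardy--Littlewood--Sobolev type bound in the transverse direction''; it needs the Fefferman block-localization step: decompose $f$ over cubes of side $\sim\delta^{-1}$, apply H\"older on the diagonal blocks (using $p\geq 2$ and block measure $\sim\delta^{-d}$), and sum the off-diagonal contributions via the rapid spatial decay of $K_\delta$. This yields the family $2^{j\alpha}\|T_{2^{-j}}f\|_{L^q}\lesssim 2^{j(\frac{d-1}{2}-\frac{d}{p}+\alpha)}\|f\|_{L^p}$ for $2\leq p\leq q$, $q\geq\tfrac{2(k+2)}{k}$, and it is the $p>2$ members of this family (not attainable by interpolating your three anchors) that reach the vertex $B_{\alpha,k}=\big(\tfrac{d-1+2\alpha}{2d},\tfrac{k}{2(k+2)}\big)$ when $\alpha<\tfrac12$. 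Your proposal omits this step entirely. Finally, for $\alpha\geq 1$ your decomposition of the distribution $(\xi_d-\psi(\xi'))_+^{-\alpha}$ is not defined without analytic continuation, which is precisely what the compactly-Fourier-supported $\phi_\alpha$ in \cite[Lemma 2.1]{ChoKimLeeShim2005} is designed to deliver.
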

The proof is based on the decay of the Fourier transform of the surface measure on $S$ (cf.
\cite{Littman1963}, \cite[Section~VIII.5.8]{Stein1993}) and convenient decompositions of the distribution
$\frac{1}{\Gamma(1-\alpha)} x^{-\alpha}_+$ (cf. \cite[Section~3.2]{Hoermander2003},
\cite[Lemma~2.1]{ChoKimLeeShim2005}). We also show that the strong bounds are sharp for $\alpha \geq
\frac{1}{2}$. In the elliptic case the currently best results were shown by Kwon--Lee
\cite[Section~2.6]{KwonLee2020}.
This also shows that our strong bounds are not sharp for $\alpha < \frac{1}{2}$. We refer to Section
\ref{section:BochnerRieszEstimates} for further discussion.
 
To describe the remainder of our analysis, we recall important properties of the Fresnel surface. Up to arbitrary neighbourhoods of four singular points, the surface is a smooth compact manifold with two connected components. The Gaussian curvature vanishes precisely along the
so-called Hamiltonian circles on the outer sheet. However, the surface is never flat, i.e., there is always a principal section
away from zero. Around the singular points, the surface looks conical and ceases to be a smooth
manifold.

We briefly explain how this leads to an analysis of the Fourier multiplier
$(p(\omega,\xi) + i\delta)^{-1}$, $\omega \in \R \backslash \{ 0 \}$, $0 < |\delta| \ll 1$. We recall that solutions
to time-harmonic Maxwell's equations are constructed by considering $\delta \to 0$ with bounds independent of
$\delta$. The non-resonant contribution of $\{\xi\in\R^3 \, : \, |p(\omega,\xi)| \geq t_0 \}$, $t_0>0$ away from
Fresnel's wave surface is estimated by Mikhlin's theorem and standard estimates for Bessel potentials. This
high-frequency part of the solutions is responsible for the condition $ 0 \leq \frac{1}{p_2} -
\frac{1}{q} \leq \frac{1}{3}$ in~\eqref{eq:ConditionsOnpq}. This contribution was called global in
\cite{ResolventEstimates2d}. We refer to \cite[Section~3]{ResolventEstimates2d} for further explanation how
this contribution does not allow for an estimate $\| (E,H) \|_{L^q(\R^3)} \lesssim \| (J_e,J_m)
\|_{L^p(\R^3)}$. 

After smoothly cutting away the global contribution, we focus on estimates for the multiplier $(p(\omega,\xi)
+ i \delta)^{-1}$ in a neighbourhood $\{ |p(\omega,\xi)| \leq t_0 \}$ near the surface. It
turns out that around the smooth elliptic part with Gaussian curvature bounded away from zero, we can use the
estimates for the Bochner-Riesz operator from Theorem~\ref{thm:GeneralizedBochnerRieszEstimates} for
$d=3,k=2,\alpha=1$. However, there is also a smooth non-elliptic part where the modulus of the Gaussian
curvature is small and vanishes precisely along the Hamiltonian circles. Here,
Theorem~\ref{thm:GeneralizedBochnerRieszEstimates} applies for $d=3,k=1,\alpha=1$. In the corresponding
analyis of the multiplier $(p(\omega,\xi)+i\delta)^{-1}$ we foliate the neighbourhoods of the Fresnel surface
by level sets of $p(\omega,\xi)$.
The contributions of the single layers are estimated with the Fourier restriction-extension theorem. In the analysis we use decompositions in Fourier space generalizing arguments of Kwon--Lee \cite[Section~4]{KwonLee2020}, where the decompositions were adapted to the sphere.

 For the contribution coming from neighbourhoods of the four isolated conical singularities, we will apply
Theorem~\ref{thm:GeneralizedBochnerRieszEstimates} once more for $d=3,k=1,\alpha=1$. On a technical level,
a major difference compared to the other regions comes from the fact that the cone is not a smooth manifold: we use an additional Littlewood-Paley decomposition and scaling to uncover its mapping properties. Jeong--Kwon--Lee \cite{JeongKwonLee2016} previously applied related arguments to analyze Sobolev inequalities for second degree non-elliptic operators.

We further mention the very recent preprint by Cast\'eras--F\"oldes \cite{CasterasFoldes2021} (see also \cite{BonheureCasterasMandel2019}). In \cite{CasterasFoldes2021} $L^p$-mapping properties of Fourier multipliers $(Q(\xi) + i \varepsilon)^{-1}$ for fourth order polynomials $Q$ were analyzed in the context of traveling waves for nonlinear equations. The analysis in \cite{CasterasFoldes2021} does not cover surfaces $\{Q(\xi) = 0 \}$ containing singular points, and the $L^p$-$L^q$-boundedness range stated in \cite[Theorem~3.3]{CasterasFoldes2021} is strictly smaller than in the corresponding results given in Theorem \ref{thm:GeneralizedBochnerRieszEstimates}.
\vspace*{0.5cm}

\emph{Outline of the paper.} In Section \ref{section:Reductions} we carry out reductions to prove
Theorem~\ref{thm:LpLqEstimatesTimeHarmonicMaxwell}.  We anticipate the uniform estimates of the regularized
solutions that we will prove in Sections \ref{section:CurvatureBoundedBelow} -
\ref{section:SingularPoints}, by which we finish the proof of Theorem~\ref{thm:LpLqEstimatesTimeHarmonicMaxwell}.  
In Section~\ref{section:FresnelSurface} we recall the relevant geometric properties of the Fresnel surface
and reduce our analysis to the case $\omega=\mu_1 = \mu_2 = \mu_3 = 1$. In
Section~\ref{section:BochnerRieszEstimates} we recall results on Bochner-Riesz estimates with negative index
for elliptic surface and extend those to estimates for a class of more general nondegenerate surfaces by
proving Theorem~\ref{thm:GeneralizedBochnerRieszEstimates}.
In Section~\ref{section:CurvatureBoundedBelow} we use these estimates to uniformly bound
solutions to \eqref{eq:TimeHarmonicAnsatz} corresponding to the smooth part of the Fresnel
surface. In Section~\ref{section:SingularPoints} we finally estimate the contribution with Fourier support
close to the four singular points.

\section{Reduction to multiplier estimates related to the Fresnel surface}
\label{section:Reductions}

The purpose of this section is to carry out the reductions indicated in the Introduction. We first define
suitable approximate solutions $(E_\delta,H_\delta)$ and present estimates for those related to 
the different parts of the Fresnel surface and away from the Fresnel surface. With these estimates
at hand, to be shown in the upcoming sections, we finish the
proof of Theorem~\ref{thm:LpLqEstimatesTimeHarmonicMaxwell}. At the end of the section we give explicit
formulae for the solution.

We work with the following convention for the Fourier transform: For $f \in \mathcal{S}(\R^d)$ the Fourier transform is defined by
\begin{equation*}
\hat{f}(\xi) = \int_{\R^d} e^{-ix.\xi} f(x) dx
\end{equation*}
and as usually extended by duality to $\mathcal{S}'(\R^d)$. The Fourier inversion formula reads for $f \in
\mathcal{S}(\R^d)$
\begin{equation*}
f(x) = (2 \pi)^{-d} \int_{\R^d} e^{ix.\xi} \hat{f}(\xi) d\xi.
\end{equation*}

\subsection{Approximate solutions}

By Proposition~\ref{prop:ReductionTimeHarmonicMaxwell} the original anisotropic Maxwell system leads to the
following second order $3 \times 3$-system for $E$ and $H$
 \begin{equation} \label{eq:MaxwellSystem}
\left\{ \begin{array}{cl}
(M_E(\xi) - \omega^2) \hat{E} &= - i \omega \varepsilon^{-1} \hat{J}_e + i \varepsilon^{-1} b(\xi) \mu^{-1} \hat{J}_m, \\
(M_H(\xi) - \omega^2) \hat{H} &= i \mu^{-1} b(\xi) \varepsilon^{-1} \hat{J}_e + i \omega \mu^{-1} \hat{J}_m
\end{array} \right.
\end{equation}
 where $M_E(\xi) = - \varepsilon^{-1} b(\xi) \mu^{-1} b(\xi)$ and $M_H(\xi) = - \mu^{-1} b(\xi)
 \varepsilon^{-1} b(\xi)$.  From~\eqref{eq:Determinant} we recall 
\begin{equation*}
  p(\omega,\xi)
  = \det (M_E(\xi) - \omega^2) = \det(M_H(\xi) - \omega^2) = - \omega^2 (\omega^4 - \omega^2 q_0(\xi) + q_1(\xi)),
\end{equation*}
for the polynomials $q_0,q_1$ as defined there. Inverting $M_E(\xi)-\omega^2$ using Cramer's rule, we
find for all $\xi\in\R^3$ such that $p(\omega,\xi)\neq 0$:
\begin{align} \label{eq:Inverses}
  \begin{aligned}
  (M_E(\xi)-\omega^2)^{-1}
  &= \frac{1}{p(\omega,\xi)} \text{adj}(M_E(\xi)-\omega^2) 
  = \frac{1}{\eps_1\eps_2\eps_3 p(\omega,\xi)} Z_{\eps,\mu}(\xi) \eps, \\
  (M_H(\xi)-\omega^2)^{-1}
  &= \frac{1}{p(\omega,\xi)} \text{adj}(M_H(\xi)-\omega^2) 
  = \frac{1}{\mu_1\mu_2\mu_3 p(\omega,\xi)} Z_{\mu,\eps}(\xi) \mu.
\end{aligned}
\end{align}  
Here, $\text{adj}(M)$ denotes the adjugate matrix of $M$. Sarrus's rule and lengthy computations yield that the components of $Z=Z_{\varepsilon,\mu}$ are given as
follows:  
\begin{align} \label{eq:ZMatrix}
  \begin{aligned}
  Z_{11}(\xi)
  &= \xi_1^2( \frac{\xi_1^2}{\mu_2\mu_3}+\frac{\xi_2^2}{\mu_1\mu_3}+\frac{\xi_3^2}{\mu_1\mu_2}) 
  - \omega^2(
  \frac{\eps_2}{\mu_2}\xi_1^2+\frac{\eps_3}{\mu_3}\xi_1^2
  +\frac{\eps_2}{\mu_1}\xi_2^2+\frac{\eps_3}{\mu_1}\xi_3^2) +\omega^4\eps_2\eps_3, \\
  Z_{12}(\xi)
  &= Z_{21}(\xi) 
  =  \xi_1\xi_2(\frac{\xi_1^2}{\mu_2\mu_3}+\frac{\xi_2^2}{\mu_1\mu_3}+\frac{\xi_3^2}{\mu_1\mu_2}-\omega^2\frac{\eps_3}{\mu_3}),
  \\
  Z_{13}(\xi)
  &= Z_{31}(\xi)
  =
  \xi_1\xi_3(\frac{\xi_1^2}{\mu_2\mu_3}+\frac{\xi_2^2}{\mu_1\mu_3}+\frac{\xi_3^2}{\mu_1\mu_2}-\omega^2\frac{\eps_2}{\mu_2}),
  \\
  Z_{22}(\xi)
  &= \xi_2^2( \frac{\xi_1^2}{\mu_2\mu_3}+\frac{\xi_2^2}{\mu_1\mu_3}+\frac{\xi_3^2}{\mu_1\mu_2}) 
  - \omega^2(
  \frac{\eps_1}{\mu_2}\xi_1^2+\frac{\eps_3}{\mu_3}\xi_2^2
  +\frac{\eps_1}{\mu_1}\xi_2^2+\frac{\eps_3}{\mu_2}\xi_3^2) +\omega^4\eps_1\eps_3, \\
  Z_{23}(\xi)
  &= Z_{32}(\xi)
   =\xi_2\xi_3(\frac{\xi_1^2}{\mu_2\mu_3}+\frac{\xi_2^2}{\mu_1\mu_3}+\frac{\xi_3^2}{\mu_1\mu_2}-\omega^2\frac{\eps_1}{\mu_1}),\\ 
  Z_{33}(\xi) 
  &= \xi_3^2( \frac{\xi_1^2}{\mu_2\mu_3}+\frac{\xi_2^2}{\mu_1\mu_3}+\frac{\xi_3^2}{\mu_1\mu_2}) 
  - \omega^2(
  \frac{\eps_1}{\mu_3}\xi_1^2+\frac{\eps_2}{\mu_3}\xi_2^2
  +\frac{\eps_1}{\mu_1}\xi_3^2+\frac{\eps_2}{\mu_2}\xi_3^2) +\omega^4\eps_1\eps_2.
\end{aligned}
\end{align}
A crucial observation is that the associated matrix-valued Fourier multiplier will be applied to
divergence-free functions. This is a
consequence of~\eqref{eq:MaxwellSystem} and~\eqref{eq:Inverses}. For that reason the fourth order terms in the
entries can be ignored (if convenient), which becomes important when estimating the large frequency parts of
our approximate solutions.
Let $Z^{\text{eff}}(\xi)=Z_{\varepsilon,\mu}^{\text{eff}}(\xi)$ denote the unique matrix-valued polynomial of
degree~$2$ such that
\begin{align*}
Z_{\varepsilon,\mu}(\xi) &= O(|\xi|^4) + Z_{\varepsilon,\mu}^{\text{eff}}(\xi), \\ \qquad \forall \xi \in \R^3: \,
Z_{\varepsilon,\mu}(\xi)v &= Z_{\varepsilon,\mu}^{\text{eff}}(\xi)v \text{ for all } v \in \R^3 \text{ with } v \cdot \xi = 0.
\end{align*}
In view of~\eqref{eq:MaxwellSystem} and~\eqref{eq:Inverses} it is natural to define the approximate solutions
$(E_\delta,H_\delta)$ for $|\delta| \neq 0$ as  follows:
\begin{equation}
\label{eq:RegularizedSolutions}
\left\{ \begin{array}{cl}
\hat{E}_\delta(\xi) &= \frac{i}{\eps_1\eps_2\eps_3(p(\omega,\xi)+i\delta)} Z_{\eps,\mu}(\xi)( - \omega  \hat{J}_e(\xi) +  b(\xi) \mu^{-1} \hat{J}_m(\xi)), \\
\hat{H}_\delta(\xi) &= \frac{i}{\mu_1\mu_2\mu_3(p(\omega,\xi)+i\delta)} Z_{\mu,\eps}(\xi)( 
b(\xi) \varepsilon^{-1} \hat{J}_e(\xi)+  \omega \hat{J}_m(\xi)).
\end{array} \right.
\end{equation}
To prove Theorem~\ref{thm:LpLqEstimatesTimeHarmonicMaxwell}, we show estimates for these
functions that  are uniform with respect to $\delta$. The global part away from the Fresnel surface is
considered in the next subsection, the remaining estimates will be done later. 
Then, taking these estimates for granted, we
show how to conclude the argument. 

\subsection{Local and global contributions}

We turn to the description of the different contributions of $(E_\delta,H_\delta)$. We split the local and
global contribution. Let $\beta_1,\beta_2 \in C^\infty(\R^3)$ satisfy $\beta_1(\xi) + \beta_2(\xi)=1$ with
\begin{equation*}
\beta_1(\xi) = 1 \text{ if } |p(\omega,\xi)| \leq t_0  
\quad\text{and}\quad \text{supp}(\beta_1) \subseteq \{ \xi \in \R^3 \,: \, |p(\omega,\xi)| \leq 2 t_0 \}
\end{equation*}
where $t_0>0$ denotes a small constant. $t_0$ will be chosen later when carrying out the estimates close to the surface. Also, for $m \in C^\infty(\R^d)$ we write
\begin{equation*}
(m(D) f) \widehat (\xi) = m(\xi) \hat{f}(\xi).
\end{equation*}


\begin{proposition}
\label{prop:GlobalEstimate}
Let $E_\delta,H_\delta$ be given by~\eqref{eq:RegularizedSolutions}. Then, we find the following
estimate to hold uniformly in $|\delta| >0$:
\begin{equation}
\label{eq:GlobalEstimate}
\| \beta_2(D) (E_\delta,H_\delta) \|_{L^q(\R^3)} \lesssim \| \beta_2(D)(J_e,J_m) \|_{L^p(\R^3)},
\end{equation}
provided that $1 \leq p,q \leq \infty$ with $0 \leq \frac{1}{p} - \frac{1}{q} \leq \frac{1}{3}$ and $(p,q) \notin \{(1,1), (3,\infty), (\infty, \infty) \}$.
 If additionally $J_e,J_m\in L^q(\R^3)$, $q<\infty$, then $E,H\in W^{1,q}(\R^3)$ with
$$
 \| \beta_2(D)(E,H)\|_{W^{1,q}(\R^3)} \lesssim_{p,q,\omega}  \|  \beta_2(D)(J_e,J_m) \|_{L^{p_1}(\R^3) \cap
 L^q(\R^3)}. 
$$
\end{proposition}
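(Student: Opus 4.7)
The starting point of the argument is the observation that $p(\omega,\cdot)$ is real-valued, so on $\supp\beta_2$ we have the uniform lower bound
\[
|p(\omega,\xi)+i\delta|^2 = p(\omega,\xi)^2 + \delta^2 \geq p(\omega,\xi)^2 \geq t_0^2
\]
for every $\delta\in\R$. Combined with $p(\omega,\xi) = -\omega^2 q_1(\xi) + O(|\xi|^2) \sim |\xi|^4$ for large $|\xi|$, a Fa\`a di Bruno computation would show that $\beta_2(\xi)/(p(\omega,\xi)+i\delta)$ belongs to the H\"ormander class $S^{-4}$ with seminorms bounded uniformly in $\delta\in\R\setminus\{0\}$. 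This is where uniformity in $\delta$ comes from; note that no limiting absorption in the classical sense is required because the lower bound does not degenerate.

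Next, I would rewrite the approximate solutions \eqref{eq:RegularizedSolutions} as a $2\times 2$ block of scalar Fourier multipliers acting on the divergence-free data $(J_e,J_m)$. The antisymmetry of $b$ gives $\xi^\top b(\xi)=0$, hence for any $v$ the vector $b(\xi)v$ is orthogonal to $\xi$. Together with $\xi\cdot\hat J_e=\xi\cdot\hat J_m=0$ this allows us to replace $Z$ by the reduced polynomial $Z^{\mathrm{eff}}$ of degree $\leq 2$ in all four blocks. As a result, the effective symbols of the diagonal blocks $J_e\mapsto E_\delta$, $J_m\mapsto H_\delta$ lie in $S^{-2}$, while those of the off-diagonal blocks $J_e\mapsto H_\delta$, $J_m\mapsto E_\delta$ lie in $S^{-1}$, all uniformly in $\delta$.

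To prove \eqref{eq:GlobalEstimate}, I would factorize each effective symbol as $\tilde m(\xi)\langle\xi\rangle^{-s}$ with $\tilde m\in S^0$ and $s\in\{1,2\}$. Mikhlin's multiplier theorem yields boundedness of $\tilde m(D)$ on every $L^r(\R^3)$ with $1<r<\infty$, and the Bessel potential $\langle D\rangle^{-s}$ maps $L^p\to L^q$ in the Hardy--Littlewood--Sobolev range $0\leq \frac1p-\frac1q\leq \frac{s}{3}$ with the standard endpoint exclusions at $p=1$, $q=\infty$, and the diagonal. The worst case $s=1$, produced by the off-diagonal blocks, yields precisely the range $0\leq \frac1p-\frac1q\leq \frac13$ with the excluded triples $(1,1)$, $(3,\infty)$, $(\infty,\infty)$, as claimed; the diagonal blocks with $s=2$ are strictly better and do not constrain the range.

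For the $W^{1,q}$-estimate, differentiating multiplies each symbol by $i\xi_j$ and thus raises the order by one: the diagonal blocks become $S^{-1}$ and the off-diagonal blocks become $S^0$. The gradient of the off-diagonal contribution is then controlled on $L^q$ by Mikhlin, whereas the gradient of the diagonal contribution can be bounded either by $L^{p_1}\to L^q$ via HLS (when $\frac1{p_1}-\frac1q\leq \frac13$) or by $L^q\to L^q$ via Mikhlin. In every case the bound is dominated by $\|\beta_2(D)(J_e,J_m)\|_{L^{p_1}\cap L^q}$. The main technical step, and the only place where some care is needed, is the verification of the uniform H\"ormander symbol estimates for $\beta_2(\xi)/(p(\omega,\xi)+i\delta)$; once these are in place, the rest of the proof is a routine combination of Mikhlin's theorem and the HLS inequality.
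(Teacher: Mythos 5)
Your proposal is correct and follows essentially the same route as the paper: both arguments use the trivial pointwise lower bound $|p(\omega,\xi)+i\delta|\geq|p(\omega,\xi)|\geq t_0$ on $\supp\beta_2$, replace $Z_{\eps,\mu}$ by the effective matrix $Z^{\mathrm{eff}}_{\eps,\mu}$ of degree two by exploiting divergence-freeness and $\xi^\top b(\xi)=0$, factor the resulting symbols as a uniformly-in-$\delta$ Mikhlin multiplier times $\langle\xi\rangle^{-1}$ or $\langle\xi\rangle^{-2}$, and conclude via Mikhlin plus Bessel-potential estimates, with $q=\infty$ handled by Sobolev embedding. Your phrasing in terms of H\"ormander symbol classes $S^{-4}$, $S^{-2}$, $S^{-1}$ is a stylistic repackaging of the paper's explicit derivative bounds rather than a genuinely different argument.
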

\begin{proof}
Choose $\chi\in C_c^\infty(\R^3)$ with $|p(\omega,\xi) | \geq c>0$ on $\text{supp}(\chi)$ and $\chi(\xi)=1$ on
$\text{supp}(\beta_2)$. We first consider the case $q\neq \infty$. Then
\begin{align*}
  \beta_2(\xi)\hat E_\delta(\xi) 
  &=  \frac{i\beta_2(\xi)}{\eps_1\eps_2\eps_3(p(\omega,\xi)+i\delta)} 
  Z_{\eps,\mu}(\xi)( - \omega  \hat{J}_e(\xi) +  b(\xi) \mu^{-1} \hat{J}_m(\xi)), \\
   &=  -\frac{i\omega \chi(\xi)\langle \xi \rangle^2
   Z^{\text{eff}}_{\eps,\mu}(\xi)}{\mu_1\mu_2\mu_3(p(\omega,\xi)+i\delta)}
   \langle \xi \rangle^{-2}\beta_2(\xi) \hat{J}_e(\xi) \\
   &\; + \frac{i \chi(\xi)\langle \xi \rangle
   Z^{\text{eff}}_{\eps,\mu}(\xi)b(\xi)\mu^{-1}}{\tilde{p}_{\varepsilon,\mu,\omega}(\xi) + i \delta}
     \langle \xi \rangle^{-1}\beta_2(\xi) \hat{J}_m(\xi).
\end{align*}
By the choice of $\chi$ we have the following uniform estimates with respect to~$\delta$:  
\begin{align*}
\left| \partial^\alpha 
\left(\frac{\omega\chi(\xi)\langle \xi \rangle^2
   Z^{\text{eff}}_{\eps,\mu}(\xi)}{\eps_1\eps_2\eps_3(p(\omega,\xi)+i\delta)}\right) \right| + \left| \partial^\alpha 
\left(\frac{\chi(\xi)\langle \xi \rangle
   Z^{\text{eff}}_{\eps,\mu}(\xi)b(\xi)\mu^{-1}}{\mu_1\mu_2\mu_3(p(\omega,\xi)+i\delta)}\right)
   \right| 
\lesssim_\alpha |\xi|^{-\alpha}
\text{ for } \alpha \in \mathbb{N}_0^3.
\end{align*}
Since $1 < q < \infty$, Mikhlin's theorem (cf. \cite[Chapter~6]{Grafakos2014}) applies and Bessel
potential estimates (see for instance~\cite[Theorem~30]{CossettiMandel2020}) yield
\begin{align*}
  \|\beta_2(D) (E_\delta,H_\delta)\|_{L^q(\R^3)}
  &\les \|\langle D \rangle^{-2}\beta_2(D) J_e\|_{L^q(\R^3)} 
  + \|\langle D \rangle^{-1}\beta_2(D) J_m\|_{L^q(\R^3)} \\
  & \quad + \|\langle D \rangle^{-2}\beta_2(D) J_m\|_{L^q(\R^3)} 
  + \|\langle D \rangle^{-1}\beta_2(D) J_e\|_{L^q(\R^3)} \\
  &\les \|\beta_2(D)(J_e,J_m)\|_{L^p(\R^3)}
\end{align*}
for the claimed range of exponents. If $q=\infty$, we first use Sobolev embedding to lower $q< \infty$,
and applying the previous argument gives \eqref{eq:GlobalEstimate} for $0 < \frac{1}{p}  <
\frac{1}{3}$, which is all we had to show in this case. This gives the claim concerning
$L^q(\R^3)$-integrability.
For the Sobolev regularity we obtain in a similar fashion

\begin{align*}
  \|\beta_2(D) (E_\delta,H_\delta)\|_{W^{1,q}(\R^3)}
  &\les \|\beta_2(D) (E_\delta,H_\delta)\|_{L^q(\R^3)}
   + \|\langle D\rangle \beta_2(D) (E_\delta,H_\delta)\|_{L^q(\R^3)} \\
  &\les \|\langle D \rangle^{-1}\beta_2(D) J_e\|_{L^q(\R^3)} 
  + \| \beta_2(D) J_m\|_{L^q(\R^3)} \\
  & \quad + \|\langle D \rangle^{-1}\beta_2(D) J_m\|_{L^q(\R^3)} 
  + \|\beta_2(D) J_e\|_{L^q(\R^3)} \\
  &\les \|\beta_2(D)(J_e,J_m)\|_{L^q(\R^3)}
\end{align*} 
The proof is complete.
\end{proof}

The paper is mainly devoted to estimate the local contribution close to the Fresnel surface $S= \{p(\omega,\xi) = 0\}$. In  
Section~\ref{section:FresnelSurface}  we shall see that the Fresnel surface has components of the following type:
\begin{itemize}
\item smooth components with non-vanishing Gaussian curvature,
\item smooth components with curvature vanishing along a one-dimensional submanifold (Hamiltonian
circles), but without flat points,
\item neighbourhoods of conical singularities.
\end{itemize}
This fact is established in Corollary~\ref{cor:Fresnel}. Precisely, it suffices to consider six components of the first kind, and four components of the second and third type.

Corresponding to the three types listed above, we split
\begin{equation*}
\beta_1(\xi) = \beta_{11}(\xi) + \beta_{12}(\xi) + \beta_{13}(\xi)
\end{equation*}
with smooth compactly supported functions localizing to neighbourhoods of the components of the above types.  
 The estimate for the smooth components with non-vanishing Gaussian curvature is a consequence of estimates for
 Bochner-Riesz operators with negative index that we will prove in
 Section~\ref{section:BochnerRieszEstimates}:
\begin{proposition}
\label{prop:PositiveCurvatureEstimate}
Let $1 \leq p,q \leq \infty$ and $(E_\delta,H_\delta)$ as in~\eqref{eq:RegularizedSolutions}. We find the
following estimate to hold uniformly in $|\delta| \neq 0$:
\begin{equation*}
\| \beta_{11}(D) (E_\delta,H_\delta) \|_{L^q(\R^3)} \lesssim \| \beta_{11}(D) (J_e,J_m) \|_{L^{p}(\R^3)}
\end{equation*} 
provided that $\frac{1}{p} > \frac{2}{3}$, $\frac{1}{q} < \frac{1}{3}$, and $\frac{1}{p} - \frac{1}{q} \geq \frac{1}{2}$.
\end{proposition}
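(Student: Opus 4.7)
The idea is to use the non-vanishing Gaussian curvature of the Fresnel surface on $\supp(\beta_{11})$ to reduce, via the implicit function theorem, to a model resolvent multiplier associated with a graph of a function whose Hessian is non-degenerate, and then invoke Theorem~\ref{thm:GeneralizedBochnerRieszEstimates} with $d=3$, $k=2$, $\alpha=1$. Indeed, the vertices listed there give
\[
\mathcal{P}_1(2) = \left\{(1/p,1/q):\; 1/p>2/3,\; 1/q<1/3,\; 1/p-1/q\geq 1/2\right\},
\]
which matches the hypotheses of the proposition exactly. Thus the proposition will follow once the multiplier $(p(\omega,\xi)+i\delta)^{-1}$, localized to the smooth elliptic piece, is shown to be controlled by the operator $T^1$ of Theorem~\ref{thm:GeneralizedBochnerRieszEstimates}.

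\textbf{Step 1 (localization and implicit function theorem).} By the geometric analysis of the Fresnel surface (Section~\ref{section:FresnelSurface}), on $\supp(\beta_{11})$ we have $\nabla_\xi p(\omega,\xi)\neq 0$ on $S$ and the two principal curvatures of $S$ are bounded away from zero. After a finite partition of unity and a permutation of coordinates we may assume $\partial_{\xi_3}p(\omega,\xi)\neq 0$ on a neighbourhood of the chosen patch. The implicit function theorem then yields a smooth non-vanishing function $a$ and a smooth function $\psi$ with $|\det D^2\psi|\gtrsim 1$ on the relevant set such that
\[
p(\omega,\xi) = a(\xi)\bigl(\xi_3 - \psi(\xi_1,\xi_2)\bigr).
\]
Inserting this into \eqref{eq:RegularizedSolutions} and absorbing the smooth, compactly supported factor $a(\xi)^{-1}\eps_1^{-1}\eps_2^{-1}\eps_3^{-1}Z_{\eps,\mu}(\xi)$ (respectively the analogous factor for $H_\delta$) into a smooth cut-off $\tilde\chi$, we reduce the proposition to proving a uniform-in-$\delta$ bound
\[
 \Bigl\| \mathcal F^{-1}\Bigl[\tfrac{\tilde\chi(\xi)}{\xi_3-\psi(\xi_1,\xi_2)+i\delta/a(\xi)}\,\hat g(\xi)\Bigr] \Bigr\|_{L^q(\R^3)} \lesssim \|g\|_{L^p(\R^3)}
\]
for $g$ a component of $\omega\hat J_e$ or $b(\xi)\mu^{-1}\hat J_m$ (up to absorbing the $b(\xi)$ factor, which is a smooth symbol on $\supp(\tilde\chi)$, into $\tilde\chi$).

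\textbf{Step 2 (extracting the Bochner--Riesz bound at $\alpha=1$).} The remaining multiplier is the resolvent model for the graph $\{\xi_3 = \psi(\xi_1,\xi_2)\}$. By the Sokhotski--Plemelj decomposition
\[
 \frac{1}{\xi_3-\psi+i\delta/a(\xi)} \;=\; \mathrm{PV}\frac{1}{\xi_3-\psi} - i\pi\,\delta\!\bigl(\xi_3-\psi\bigr) + \text{remainder uniform in }\delta,
\]
the delta part is precisely the symbol of $T^1$ in Theorem~\ref{thm:GeneralizedBochnerRieszEstimates}(i) and the principal-value part is controlled by the same family of estimates after dyadically decomposing in $|\xi_3-\psi|$ and summing. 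A clean implementation is to split the cut-off according to $|\xi_3-\psi|\leq \delta$ and $|\xi_3-\psi|>\delta$: the first piece is a smooth symbol supported in a layer of thickness $\delta$ with $L^\infty$ norm $\lesssim \delta^{-1}$, which is equivalent (after scaling in the normal direction) to the Fourier extension operator at scale $\delta$ on the foliating surfaces $\{\xi_3=\psi+t\}$, bounded uniformly in $|t|$ since the Hessian of $\psi$ is non-degenerate; the second piece is a superposition of such extension operators against $1/(t+i\delta)$ for $|t|>\delta$, summable dyadically in $|t|$ using the better-than-trivial decay of the $T^1$-bound. Both parts thus fall into the range $\mathcal{P}_1(2)$ of Theorem~\ref{thm:GeneralizedBochnerRieszEstimates}(i).

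\textbf{Main obstacle.} The conceptual step is routine once Theorem~\ref{thm:GeneralizedBochnerRieszEstimates} is available; the technical difficulty lies in Step~2, where one must convert the Bochner--Riesz bound at $\alpha=1$ (a bound for the distribution $\delta(\xi_3-\psi)$) into a uniform-in-$\delta$ bound for the Cauchy-type multiplier $1/(\xi_3-\psi+i\delta)$. This requires the dyadic foliation argument to reproduce on each level set $\{\xi_3=\psi+t\}$ the curvature lower bounds needed for Theorem~\ref{thm:GeneralizedBochnerRieszEstimates}, and a careful transversal summation so that the constants remain independent of $\delta$.
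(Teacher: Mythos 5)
Your high-level strategy matches the paper: change to graph representation via the implicit function theorem, then use the non-vanishing curvature to foliate near the surface by level sets and apply Theorem~\ref{thm:GeneralizedBochnerRieszEstimates} with $d=3$, $k=2$, $\alpha=1$ on each layer. The identification $\mathcal P_1(2) = \{1/p>2/3,\,1/q<1/3,\,1/p-1/q\geq 1/2\}$ is correct. However, Step~2 does not actually close the argument, and the gap occurs exactly where you yourself flag the "main obstacle."

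The issue is the convergence of the dyadic sum at the endpoint $\tfrac{1}{p}-\tfrac{1}{q}=\tfrac12$. If you localize to slabs $\{|\xi_3-\psi|\sim 2^{-j}\}$ and estimate each slab by integrating the layer-wise restriction--extension bound of Theorem~\ref{thm:GeneralizedBochnerRieszEstimates}, each slab contributes a factor comparable to its thickness, while the singular multiplier $1/(\xi_3-\psi+i\delta)$ contributes the reciprocal; these cancel and the sum over dyadic scales is $\sum_j 1$, which diverges. There is no "better-than-trivial decay of the $T^1$-bound" to exploit without a sharper slab estimate. The paper supplies precisely this in Lemma~\ref{lem:CSum}, which for a slab of width $\lambda$ gives the off-diagonal bound
\[
\| \mathcal{F}^{-1} ( \phi(\lambda^{-1} P) \chi \hat{f} ) \|_{L^{q}}
\lesssim \lambda^{\frac{k+2}{2} - \frac{k+1}{q} } \| f \|_{L^{p}},
\quad \frac{1}{q}\geq\frac{k+2}{k}\Bigl(1-\frac1p\Bigr),
\]
whose exponent changes sign across the critical index; this is then fed into Bourgain's summation argument (Lemma~\ref{lem:SummationLemma}) to obtain the restricted weak-type bound at $B'_{1,2}$, from which the strong bounds on the rest of the pentagon follow by real interpolation and duality. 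You neither prove an analogue of Lemma~\ref{lem:CSum} nor invoke a Bourgain-type summation scheme, so your claimed bound at $\tfrac1p-\tfrac1q=\tfrac12$ is not established; only the strict interior $\tfrac1p-\tfrac1q>\tfrac12$ could plausibly be handled by a straightforward dyadic sum. In addition, a minor but real technical wrinkle: absorbing the smooth factor $a(\xi)^{-1}$ into the numerator replaces $\delta$ by the $\xi$-dependent quantity $\delta/a(\xi)$, which is no longer constant on the foliating level sets $\{\xi_3-\psi=t\}$; the clean route (taken in the paper) is to keep the multiplier $1/(P+i\delta)$ intact and foliate by the level sets of $P$ itself, on which $\mathrm{Im}\bigl((P+i\delta)^{-1}\bigr)=-\delta/(t^2+\delta^2)$ is genuinely constant and the coarea argument goes through without extra bookkeeping.
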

By similar means, we show the inferior estimate for components with vanishing Gaussian curvature along the
Hamiltonian circles:
\begin{proposition}
\label{prop:HamiltonEstimates}
Let $1 \leq p, q \leq \infty$ and $(E_\delta,H_\delta)$ as in~\eqref{eq:RegularizedSolutions}. We find the
following estimate to hold uniformly in $|\delta| \neq 0$:
\begin{equation*}
\| \beta_{12}(D) (E_\delta,H_\delta) \|_{L^q(\R^3)} \lesssim \| \beta_{12}(D) (J_e,J_m) \|_{L^p(\R^3)}
\end{equation*}
provided that $\frac{1}{p} > \frac{3}{4}$, $\frac{1}{q} < \frac{1}{4}$, and $\frac{1}{p} - \frac{1}{q} \geq \frac{2}{3}$.
\end{proposition}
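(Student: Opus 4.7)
The plan is to reduce this to an application of Theorem~\ref{thm:GeneralizedBochnerRieszEstimates} with the parameters $d=3$, $k=1$, $\alpha=1$. A direct check shows that these choices give $\frac{k+2\alpha}{2(k+1)} = \frac{3}{4}$, $\frac{k+2-2\alpha}{2(k+1)} = \frac{1}{4}$, and $\frac{2\alpha}{k+2} = \frac{2}{3}$, so the set $\mathcal{P}_{1}(1)$ is precisely the range of exponents in the statement. Thus the hypothesis $k=1$ (one principal curvature bounded below on each of the smooth non-elliptic Fresnel components) matches the constraint that characterizes the Hamiltonian circles region.

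\textbf{Localization and graph parametrization.} Using the description of the Fresnel surface from Section~\ref{section:FresnelSurface} (in particular Corollary~\ref{cor:Fresnel}), I would cover the support of $\beta_{12}$ by finitely many small patches on which, after an orthogonal change of coordinates, the surface $S=\{p(\omega,\xi)=0\}$ is a smooth graph $\xi_3=\psi(\xi_1,\xi_2)$ with $\nabla_\xi p \neq 0$, one principal curvature of $\psi$ bounded from below, and the other vanishing only along a $1$-dimensional submanifold (no flat point). By a finite partition of unity I may assume $\beta_{12}$ is supported on one such patch.

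\textbf{Reduction of the symbol.} The multipliers appearing in~\eqref{eq:RegularizedSolutions} have the form $\frac{\beta_{12}(\xi) M(\xi)}{p(\omega,\xi)+i\delta}$, where $M(\xi)$ is a smooth, compactly supported matrix-valued polynomial coming from $Z_{\eps,\mu}(\xi)$, $Z_{\mu,\eps}(\xi)$, $b(\xi)$. Since $\beta_{12}$ has compact support bounded away from the origin, $M$ together with any harmless Jacobian factors can be absorbed into a new smooth cutoff $\tilde\chi$ of the form required in Theorem~\ref{thm:GeneralizedBochnerRieszEstimates}; the stability statement in that theorem (dependence only on $\|\tilde\chi\|_{C^N}$, $\|\psi\|_{C^N}$ and the principal-curvature lower bound) makes this absorption legitimate.

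\textbf{Foliation and passage to the Bochner--Riesz operator.} Because $\nabla_\xi p\neq 0$ on $S$, I change variables $(\xi_1,\xi_2,\xi_3)\mapsto (\eta_1,\eta_2,\tau)$ with $\tau=p(\omega,\xi)$ and $(\eta_1,\eta_2)$ smooth coordinates along the level sets $\{p=\tau\}$. Each level set is a graph $\eta_3=\psi_\tau(\eta_1,\eta_2)$ that depends smoothly on $\tau$ for $|\tau|\leq 2t_0$; uniformly in $\tau$ it has one principal curvature bounded below and the $C^N$-norms are under control. In these coordinates the multiplier becomes $\frac{\tilde\chi(\eta)}{\tau + i\delta}$, which is precisely the object whose action I must estimate uniformly in $\delta$. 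Following the distributional decomposition of $\Gamma(1-\alpha)^{-1}x_+^{-\alpha}$ (cf.~\cite[Section~3.2]{Hoermander2003}, \cite[Lemma~2.1]{ChoKimLeeShim2005}) adapted to the regularization $\tau+i\delta$, I decompose $\frac{1}{\tau+i\delta}$ into a principal-value and a delta-like piece (plus remainders), each of which, after pulling back, is handled by the $\alpha=1$ case of Theorem~\ref{thm:GeneralizedBochnerRieszEstimates} applied to the foliation $\{\psi_\tau\}$. Combining the two sign choices of $\tau$ then reproduces $\frac{1}{\tau+i\delta}$.

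\textbf{Main obstacle.} The routine parts (localization, compactness of the Fresnel surface, smooth changes of variables, and absorption of polynomial factors into $\chi$) are standard. The actual difficulty lies in making precise the last step: relating the regularization $(p(\omega,\xi)+i\delta)^{-1}$ to the family of Bochner--Riesz multipliers of index $\alpha=1$ (surface measures on $\{p=\tau\}$) with uniform bounds in both $\tau$ and $\delta$. This is where the stability assertion in Theorem~\ref{thm:GeneralizedBochnerRieszEstimates} and the explicit decomposition of the distribution $\frac{1}{x+i\delta}$ into pieces concentrated on $\{\pm x\geq 0\}$ are essential, and where the restriction $k=1$ (rather than $k=2$ as for the strictly curved components) forces the range $\frac{1}{p}-\frac{1}{q}\geq \frac{2}{3}$ that appears in the proposition.
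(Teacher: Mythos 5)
Your plan correctly identifies the parameter choice $(d,k,\alpha)=(3,1,1)$ in Theorem~\ref{thm:GeneralizedBochnerRieszEstimates}, the localization via Corollary~\ref{cor:Fresnel}, the absorption of $Z_{\eps,\mu}$, $b(\xi)$ into the cutoff (cf.\ Remark~\ref{rem:MultiplierEstimate}), and the foliation by level sets of $p(\omega,\cdot)$; this matches the paper's approach at a high level, which proves Propositions~\ref{prop:PositiveCurvatureEstimate} and~\ref{prop:HamiltonEstimates} simultaneously via the restriction--extension bound of Corollary~\ref{cor:RestrictionEstimate}.

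However, the ``main obstacle'' you flag is where the actual work lies, and your proposed resolution does not close the gap. You cannot simply apply the level-set restriction--extension estimate with a weight $\frac{1}{\tau+i\delta}$ and integrate in $\tau$: that produces a $\log(1/\delta)$ and is not uniform. The paper's treatment is substantially more structured. It splits $\frac{\chi\beta}{P+i\delta}=\mathfrak R+i\mathfrak I$; the $\mathfrak I$ (delta-like) piece is handled directly by coarea plus Corollary~\ref{cor:RestrictionEstimate} and Young in Lorentz spaces. For $\mathfrak R$ (the p.v.-like piece) the paper performs a dyadic decomposition in the variable $t=P(\xi)$, following Jeong--Kwon--Lee: choosing $\phi$ with $\text{supp}(\hat\phi)\subset\{|t|\sim 1\}$, $\tilde\phi(t)=t\phi(t)$, and $\sum_j\tilde\phi(2^{-j}t)=1$, it writes $\mathfrak R$ as a sum of pieces $A_j$ (inner scales $2^j<|\delta|$), $B_j$, $C_j$ (outer scales $2^j\geq|\delta|$). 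Estimating the $C_j$ pieces then needs a refined scaling estimate
$
\|\mathcal{F}^{-1}(\phi(\lambda^{-1}P(\xi))\chi(\xi)\hat f(\xi))\|_{L^q}\lesssim\lambda^{\frac{k+2}{2}-\frac{k+1}{q}}\|f\|_{L^p}
$
(Lemma~\ref{lem:CSum}), proved via Plancherel on $L^2$ and a stationary-phase kernel bound for $L^1\to L^\infty$; since the exponent $\frac{k}{2}-\frac{k+1}{q_0}$ vanishes exactly at the restricted weak-type corner $B'_{1,k}$, you cannot sum the $C_j$ geometrically and must invoke Bourgain's summation lemma (Lemma~\ref{lem:SummationLemma}). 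None of these three ingredients --- the dyadic $A_j,B_j,C_j$ decomposition, Lemma~\ref{lem:CSum}, and Bourgain's summation --- appear in your plan, and without them the passage from Theorem~\ref{thm:GeneralizedBochnerRieszEstimates} to the uniform bound on $(P(D)+i\delta)^{-1}$ does not go through.
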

At last, the estimate around the singular points is shown in Section \ref{section:SingularPoints}:
\begin{proposition}
\label{prop:SingularEstimate}
Let $1 \leq p, q \leq \infty$ and $(E_\delta,H_\delta)$ as in~\eqref{eq:RegularizedSolutions}. We find the
following estimates to hold uniformly in $|\delta| \neq 0$:
\begin{equation*}
\| \beta_{13}(D) (E_\delta,H_\delta) \|_{L^q(\R^3)} \lesssim \| \beta_{13}(D) (J_e,J_m) \|_{L^p(\R^3)}
\end{equation*}
provided that $\frac{1}{p} > \frac{3}{4}$, $\frac{1}{q} < \frac{1}{4}$, and $\frac{1}{p} - \frac{1}{q} \geq \frac{2}{3}$.
\end{proposition}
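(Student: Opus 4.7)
Since $\beta_{13}$ is supported near the four conical singularities, I would split $\beta_{13}=\sum_{j=1}^{4}\beta_{13}^{(j)}$ and treat each singular point $\xi_0$ separately. Translating via $\eta=\xi-\xi_0$ (which corresponds to a modulation in physical space and hence is an isometry on $L^p$), I can assume $\xi_0=0$. From Section~\ref{section:FresnelSurface} the polynomial $p(\omega,\cdot)$ has a true conical zero at $\xi_0$, so a Taylor expansion gives $p(\omega,\eta)=Q(\eta)+R(\eta)$ with $Q$ a non-degenerate indefinite quadratic form and $R(\eta)=O(|\eta|^3)$. After a linear change of coordinates I bring $Q$ to the normal form $\eta_1^2+\eta_2^2-\eta_3^2$. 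The matrix-valued numerators $Z^{\text{eff}}_{\eps,\mu}(\xi)b(\xi)\mu^{-1}$ and $\omega Z^{\text{eff}}_{\eps,\mu}(\xi)$ from \eqref{eq:RegularizedSolutions} are smooth at $\xi_0$, so on the cutoff they are harmless Mikhlin-type factors and do not affect the structure of the multiplier.

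\textbf{Dyadic decomposition and rescaling.} I would pick $\phi\in C^\infty_c(\{\tfrac12<|\eta|<2\})$ with $\sum_{k\geq k_0}\phi(2^k\eta)\equiv 1$ on a punctured neighbourhood of the origin containing $\mathrm{supp}(\beta_{13}^{(j)})$. This decomposes the local multiplier into pieces $m_k(\eta)=\phi(2^k\eta)/(p(\omega,\eta)+i\delta)$ supported in annuli of width $2^{-k}$. Since $p(\omega,2^{-k}\tilde\eta)=2^{-2k}Q(\tilde\eta)+O(2^{-3k})$, the substitution $\tilde\eta=2^k\eta$ produces a unit-scale multiplier $\tilde m_k(\tilde\eta)=\phi(\tilde\eta)\big/\bigl(Q(\tilde\eta)+i\,2^{2k}\delta+O(2^{-k})\bigr)$. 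For $k$ large the error $O(2^{-k})$ is uniformly small on the fixed annulus $\mathrm{supp}(\phi)$ and can be absorbed into the principal part via a Neumann series; the finitely many remaining small $k$ contribute a bounded Fourier multiplier. Standard dilation identities yield
\begin{equation*}
\|T_k f\|_{L^q(\R^3)} \;\lesssim\; 2^{(2+3/q-3/p)k}\,\|\tilde T_k g_k\|_{L^q(\R^3)},\qquad g_k(x)=f(2^{-k}x),
\end{equation*}
with $\|g_k\|_{L^p}=2^{3k/p}\|f\|_{L^p}$, reducing matters to a uniform bound for the unit-scale operator $\tilde T_k$.

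\textbf{Unit-scale Bochner--Riesz bound.} On the unit annulus the zero set $\{Q=0\}$ is a smooth compact piece of a cone, which is a developable surface with exactly one non-vanishing principal curvature. I would parametrize it as a graph $\tilde\eta_3=\psi(\tilde\eta')$ with $\psi(\tilde\eta')=\pm\sqrt{\tilde\eta_1^2+\tilde\eta_2^2}$ (away from the tip), split the multiplier into contributions from $\{\tilde\eta_3>\psi\}$ and $\{\tilde\eta_3<\psi\}$, and use the standard decomposition (cf.\ \cite[Lemma~2.1]{ChoKimLeeShim2005}) of $\frac{1}{x+i\tilde\delta}$ into the distributions that factor through $(\tilde\eta_3-\psi)^{-\alpha}_\pm$ with $\alpha=1$. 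Theorem~\ref{thm:GeneralizedBochnerRieszEstimates}(i) with $d=3$, $k=1$, $\alpha=1$ then gives the uniform-in-$\tilde\delta$ estimate $\|\tilde T_k g\|_{L^q}\lesssim\|g\|_{L^p}$ on precisely the range $\tfrac1p>\tfrac34$, $\tfrac1q<\tfrac14$, $\tfrac1p-\tfrac1q\geq\tfrac23$, which is exactly the hypothesis of the proposition.

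\textbf{Summation and the main obstacle.} Combining the two previous steps yields $\|T_k f\|_{L^q}\lesssim 2^{(2-3(1/p-1/q))k}\|\tilde\beta_{13}(D)f\|_{L^p}$, whose exponent is non-positive under the sharp scaling assumption $\tfrac1p-\tfrac1q\geq\tfrac23$. For the strict inequality the sum $\sum_k T_k$ converges geometrically. The delicate step, and the main obstacle, is the scaling-critical endpoint $\tfrac1p-\tfrac1q=\tfrac23$, where each dyadic piece is only bounded of size $O(1)$ and naive summation diverges. Here I would exploit that the annular Fourier supports $\{|\eta|\sim 2^{-k}\}$ have finite overlap, so a Littlewood--Paley argument in the spirit of Jeong--Kwon--Lee \cite{JeongKwonLee2016} together with the restricted weak type estimates \eqref{eq:WeakTypeBoundIII} at the endpoint $B_{\alpha,k}$ of Theorem~\ref{thm:GeneralizedBochnerRieszEstimates}, followed by real interpolation, closes the summation.
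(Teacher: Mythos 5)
Your overall strategy (translate to the singular point, dyadic annular decomposition, rescale to unit scale, apply Theorem~\ref{thm:GeneralizedBochnerRieszEstimates} with $d=3$, $k=1$, $\alpha=1$, Littlewood--Paley summation at the critical exponent) matches the paper's Section~\ref{section:SingularPoints}, and the scaling exponent $2-3(\tfrac1p-\tfrac1q)$ together with the need for almost-orthogonality at $\tfrac1p-\tfrac1q=\tfrac23$ are identified correctly; the observation that the matrix-valued numerators are harmless Mikhlin factors on bounded frequencies agrees with Remark~\ref{rem:MultiplierEstimate}. However, two steps are substantially incomplete.

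First, ``absorbing the $O(2^{-k})$ error via Neumann series'' does not work. On the unit annulus $Q(\tilde\eta)$ vanishes along the cone and $2^{2k}\delta$ can be arbitrarily small, so the denominator $Q(\tilde\eta)+i\,2^{2k}\delta$ has no lower bound; the Neumann series diverges precisely near the characteristic set, which is the region you need to control. Moreover, you estimate curvature on $\{Q=0\}$, but the level sets that actually appear are those of the rescaled $p$, not of $Q$. What is needed is a structural statement: that the zero set is a small $C^N$-perturbation of the unit cone carrying a smooth factorization. This is the content of Proposition~\ref{prop:ParametrizedCone}, which yields $\tilde p(\xi)=(\xi_d-|\xi'|+r_1(\xi'))(\xi_d+|\xi'|+r_2(\xi'))m(\xi)$ with $|\partial^\alpha r_i(\xi')|\lesssim|\xi'|^{2-|\alpha|}$ and $m$ elliptic; after the dyadic rescaling $\zeta=2^\ell\xi$ the perturbations $r_{i,\ell}(\zeta')=2^\ell r_i(2^{-\ell}\zeta')$ shrink in $C^N$ uniformly, and the \emph{stability} statement at the end of Theorem~\ref{thm:GeneralizedBochnerRieszEstimates} is exactly what gives the unit-scale bound with an $\ell$-independent constant.

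Second, the reduction from $(p(\xi)+i\delta)^{-1}$ to the Bochner--Riesz operator $T^\alpha$ is not as immediate as your cite to \cite[Lemma~2.1]{ChoKimLeeShim2005} suggests: that lemma decomposes the distribution $\Gamma(1-\alpha)^{-1}x_+^{-\alpha}$, which at $\alpha=1$ is a delta distribution, whereas $(x+i\delta)^{-1}$ is a bona fide function for every $\delta\neq 0$ whose limit carries both a principal-value and a delta piece, and the whole point of the proposition is uniformity in $\delta$. Closing this gap requires the argument of Section~\ref{subsection:UniformEstimatesSingularMultiplierI}: split into $\mathfrak R+i\mathfrak I$, foliate by level sets of $p$ via the coarea formula and apply the single-layer extension estimate (Corollary~\ref{cor:RestrictionEstimate}) to $\mathfrak I$, and decompose $\mathfrak R$ dyadically into $A_j,B_j,C_j$ according to $2^j\lessgtr|\delta|$, using Lemma~\ref{lem:CSum} and the summation Lemma~\ref{lem:SummationLemma} to close. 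You should also flag that the cases $p=1$ and $q=\infty$ fall outside the range of the Littlewood--Paley lemma (Lemma~\ref{lem:LittlewoodPaleySingularMultiplier}) and need the separate compact-frequency reduction used at the end of the proof of Proposition~\ref{prop:GeneralizedConeMultiplier}.
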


\begin{remark}
\label{rem:MultiplierEstimate}
For these estimates, due to bounded frequencies, the precise form of $Z_{\varepsilon,\mu}$ (or 
$Z^{\text{eff}}_{\varepsilon,\mu}$) is not important. It suffices to show the above estimates for the
multiplier
\begin{equation*}
A_\delta f(x) = \int_{\R^3} e^{ix.\xi} \frac{\beta_{1i}(\xi)}{p(\omega,\xi) + i\delta} \hat{f}(\xi) d\xi.
\end{equation*}
Again due to bounded frequencies, the $W^{1,q}(\R^3)$-estimates result from
$$
\| \beta_{1i}(D) (E_\delta,H_\delta) \|_{W^{m,q}(\R^3)} \lesssim_{m,q} \| \beta_{1i}(D) (E_\delta,H_\delta)
\|_{L^q(\R^3)} \qquad (i=1,2,3)
$$ 
as a consquence of Young's inequality.
\end{remark}

\subsection{Proof of Theorem \ref{thm:LpLqEstimatesTimeHarmonicMaxwell}} 

By Propositions \ref{prop:GlobalEstimate} - \ref{prop:SingularEstimate} we have uniform bounds in $\delta \neq 0$:
\begin{equation*}
\| (E_\delta,H_\delta) \|_{L^q(\R^3)} \lesssim \| (J_e,J_m) \|_{L^{p_1}(\R^3) \cap L^{p_2}(\R^3)}
\end{equation*}
for $q$, $p_1$, $p_2$ as in the assumptions of Theorem~\ref{thm:LpLqEstimatesTimeHarmonicMaxwell}.
Hence, there is a weak limit $(E,H) \in L^q(\R^3;\C^6)$, which satisfies the same bound by the
Banach--Alaoglu--Bourbaki theorem. 
We have to show that the approximate solutions weakly converge to distributional solutions of
\begin{equation}
\label{eq:OriginalFirstOrderSystem}
\left\{ \begin{array}{cl}
ib(\xi) \hat{E}(\xi) + i \omega \mu \hat{H}(\xi) &= \hat{J}_m(\xi), \\
i b(\xi) \hat{H}(\xi) - i \omega \varepsilon \hat{E}(\xi) &= \hat{J}_e(\xi).
\end{array} \right.
\end{equation}
Indeed,~\eqref{eq:RegularizedSolutions} gives
\begin{equation*}
\begin{split}
&\quad ib(\xi) \hat{E}_{\delta}(\xi) + i \omega\mu \hat{H}_{\delta}(\xi) \\
&=
\frac{b(\xi)Z_{\varepsilon,\mu}(\xi)}{\eps_1\eps_2\eps_3(p(\omega,\xi)+i\delta)} 
\big( \omega   \hat{J}_e(\xi) - b(\xi) \mu^{-1} \hat{J}_m(\xi) \big) \\
&\quad - \frac{\omega\mu Z_{\mu,\varepsilon}(\xi)}{\mu_1\mu_2\mu_3(p(\omega,\xi)+i\delta)} \big( b(\xi)
\varepsilon^{-1} \hat{J}_e(\xi) + \omega \hat{J}_m(\xi) \big) \\
&= \frac{\omega}{p(\omega,\xi) + i \delta} \left( \frac{b(\xi) Z_{\varepsilon,\mu}(\xi)}{
\varepsilon_1 \varepsilon_2 \varepsilon_3} - \frac{\mu Z_{\mu,\varepsilon}(\xi) b(\xi)
\varepsilon^{-1}}{\mu_1 \mu_2 \mu_3} \right) \hat{J}_e(\xi) \\
&\quad - \frac{1}{p(\omega,\xi) + i\delta} \left( \frac{ b(\xi) Z_{\varepsilon,\mu}(\xi) b(\xi)
\mu^{-1}}{\varepsilon_1 \varepsilon_2 \varepsilon_3} + \frac{\omega^2 \mu
Z_{\mu,\varepsilon}(\xi)}{\mu_1 \mu_2 \mu_3} \right) \hat{J}_m(\xi).
\end{split} 
\end{equation*}
 From~\eqref{eq:ZMatrix} one infers after lengthy computations 
\begin{equation*}
\begin{split}
\frac{b(\xi) Z_{\varepsilon,\mu}(\xi)}{\varepsilon_1 \varepsilon_2 \varepsilon_3} -
 \frac{\mu Z_{\mu,\varepsilon}(\xi) b(\xi) \varepsilon^{-1}}{\mu_1 \mu_2 \mu_3} &= 0, \\
\frac{b(\xi) Z_{\varepsilon,\mu}(\xi) b(\xi) \mu^{-1}}{\varepsilon_1
\varepsilon_2 \varepsilon_3} + \frac{\omega^2 \mu Z_{\mu,\varepsilon}(\xi)}{\mu_1 \mu_2 \mu_3} 
&=  -p(\omega,\xi) I_3.
\end{split}
\end{equation*}
As a consequence we obtain
\begin{equation*}
ib(\xi) \hat{E}_{\delta}(\xi) + i \omega\mu \hat{H}_{\delta}(\xi) 
= \frac{p(\omega,\xi)}{p(\omega,\xi) + i \delta} \hat{J}_m(\xi) 
= \hat{J}_m(\xi) - \frac{i \delta}{p(\omega,\xi) + i \delta} \hat{J}_m(\xi).
\end{equation*}
By Proposition \ref{prop:GlobalEstimate} - \ref{prop:SingularEstimate}, and
Remark~\ref{rem:MultiplierEstimate} we have
\begin{equation*}
\| (p(\omega,D) + i \delta)^{-1} J_m \|_{L^q(\R^3)} \lesssim \| J_m \|_{L^{p_1}(\R^3) \cap L^{p_2}(\R^3)}
\end{equation*}
and, when assuming $J_m\in L^q(\R^3)$,
\begin{equation*}
\| (p(\omega,D) + i \delta)^{-1} J_m \|_{W^{1,q}(\R^3)} \lesssim \| J_m \|_{L^{p_1}(\R^3) \cap L^q(\R^3)}
\end{equation*}
so that the only $\delta$-dependent term vanishes as $\delta\to 0$. This implies
\begin{equation*}
\nabla \times E + i \omega \mu H = J_m \quad\text{in }\R^3
\end{equation*}
in the distributional sense and even in the weak sense for $J_m\in L^q(\R^3)$. Similarly, one proves the
validity of the second equation in~\eqref{eq:OriginalFirstOrderSystem}, and the proof is complete.\\ \hfill $\Box$

\subsection{Explicit representations of solutions}
At last, we give explicit representations of the constructed solutions. By Sokhotsky's formula (cf. Sections~3.2 and 6.1 in \cite{Hoermander2003}):
\begin{proposition}
\label{prop:PrincipalValue}
Let $H: \R^d \to \R$ such that $| \nabla H(\xi) | \neq 0$ at any point where $H(\xi) = 0$, then we can define the distributional limit
\begin{equation*}
(H(\xi) \pm i0)^{-1} = \lim_{\varepsilon \to 0} (H(\xi) \pm i \varepsilon)^{-1}.
\end{equation*}
Furthermore,
\begin{equation*}
(H(\xi) \pm i0)^{-1} = v.p. \frac{1}{H(\xi)} \mp i \pi \delta(H)
\end{equation*}
in the sense of distributions.
\end{proposition}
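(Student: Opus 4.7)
The plan is to reduce the statement to the classical one-dimensional Sokhotsky--Plemelj formula via local straightening of the hypersurface $\{H=0\}$. Given a test function $\varphi\in C_c^\infty(\R^d)$, we split the argument using a partition of unity adapted to the zero set of $H$. Away from $\{H=0\}$ the function $1/H$ is smooth, and the dominated convergence theorem immediately gives $\langle (H\pm i\eps)^{-1},\varphi\rangle \to \langle 1/H,\varphi\rangle$ for $\varphi$ supported in $\{H\neq 0\}$. Since the principal value agrees with $1/H$ on this open set and $\delta(H)$ is supported on $\{H=0\}$, the formula holds trivially there.

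Near any point $\xi_0\in\{H=0\}$ the nondegeneracy assumption $\nabla H(\xi_0)\neq 0$ allows an application of the implicit function theorem: there exist a neighbourhood $U\ni\xi_0$ and a diffeomorphism $\Phi\colon U\to V\subset\R^d$ with $\Phi(\xi)=(H(\xi),\eta(\xi))=:(s,\eta)$, where $\eta$ are $d-1$ coordinates along the level sets. In these coordinates one has $dx = J(s,\eta)\,ds\,d\eta$ with $J$ smooth and positive, and for any $\varphi\in C_c^\infty(U)$
\begin{equation*}
  \int_{\R^d} \frac{\varphi(\xi)}{H(\xi)\pm i\eps}\,d\xi
  = \int \Big(\int \frac{\tilde\varphi(s,\eta)}{s\pm i\eps}\,ds\Big)\,d\eta,
\end{equation*}
where $\tilde\varphi(s,\eta):=(\varphi\cdot J)\circ\Phi^{-1}(s,\eta)\in C_c^\infty(V)$.

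The inner integral is handled by the one-dimensional Sokhotsky formula. Writing
\begin{equation*}
  \frac{1}{s\pm i\eps} = \frac{s}{s^2+\eps^2} \mp i\,\frac{\eps}{s^2+\eps^2},
\end{equation*}
the real part converges as $\eps\to 0^+$ to $\mathrm{v.p.}\,\tfrac{1}{s}$ (a standard computation using evenness of $s/(s^2+\eps^2)$ and Taylor expansion of $\tilde\varphi(\cdot,\eta)$ at $s=0$), while the imaginary part is $\mp i\pi$ times the Poisson kernel, which forms an approximate identity and therefore converges to $\mp i\pi\,\tilde\varphi(0,\eta)$. Integrating in $\eta$ yields
\begin{equation*}
  \lim_{\eps\to 0^+}\int_U \frac{\varphi}{H\pm i\eps}\,d\xi
  = \mathrm{v.p.}\int_U \frac{\varphi}{H}\,d\xi \;\mp\; i\pi \int_{\{H=0\}\cap U} \varphi\,\frac{d\sigma}{|\nabla H|},
\end{equation*}
the surface integral being exactly $\langle\delta(H),\varphi\rangle$ in the sense of Hörmander \cite[Sec.~6.1]{Hoermander2003}.

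Summing the local contributions via a partition of unity subordinate to a covering of $\supp\varphi$ by a neighbourhood of $\{H=0\}$ and its complement proves both the existence of the distributional limit $(H\pm i0)^{-1}$ and the claimed identity. The only delicate point is ensuring that the principal value $\mathrm{v.p.}\,1/H$ is a well-defined distribution on $\R^d$, which follows from the same local change of variables together with the fact that the one-dimensional principal value is; uniformity of the bounds in $\eta$ is guaranteed by compact support of $\tilde\varphi$ and the smoothness of the Jacobian $J$. No genuine obstacle arises beyond this bookkeeping, as the multidimensional statement is essentially the fibrewise 1D formula.
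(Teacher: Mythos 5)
The paper states this proposition with a pointer to H\"ormander \cite[Sections~3.2 and 6.1]{Hoermander2003} rather than giving its own proof, and your argument is precisely the standard one developed there: local straightening of $\{H=0\}$ via the implicit function theorem, a fibrewise application of the one-dimensional Sokhotsky--Plemelj formula (splitting into the odd kernel $s/(s^2+\eps^2)$ and the Poisson kernel), and the identification of $\delta(H)$ with the weighted surface measure $|\nabla H|^{-1}\,d\sigma$. Your write-up is correct and matches the cited route.
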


In the context of the easier Helmholtz equation
\begin{equation*}
(\Delta +1) u = -f,
\end{equation*}
this allows to write for so-called outgoing solutions
\begin{equation*}
\begin{split}
u(x) &= \frac{1}{(2 \pi)^d} \int_{\R^d} \frac{e^{ix.\xi}}{|\xi|^2-1 - i 0} \hat{f}(\xi) d\xi \\
 &= \frac{1}{(2 \pi)^d} v.p. \int_{\R^d} \frac{e^{ix.\xi}}{|\xi|^2-1} \hat{f}(\xi) d\xi +
  \frac{i \pi}{(2 \pi)^d} \int_{\mathbb{S}^{d-1}} \hat{f}(\xi) e^{i x. \xi} d\sigma(\xi).
\end{split}
\end{equation*}

Proposition \ref{prop:PrincipalValue} suggests that the solutions to anisotropic Maxwell's equations can
again be written as principal value and delta distribution in Fourier space. However, Proposition
\ref{prop:PrincipalValue} only allows to make sense of the principal value and delta distribution if $S =
\{\xi \in \R^3 \, : \, p(\omega,\xi) = 0 \}$ is a smooth manifold. But there are four isolated singular
points $\zeta_1,\ldots,\zeta_4\in S$ as we will prove in Proposition~\ref{prop:singularpoints}. Still, we
shall see how $v.p. \frac{1}{p(\omega,\xi)}$ and $\delta_S(\xi)$ can be understood as Fourier multipliers with certain $L^p$-mapping properties. For a dense set, e.g., $J \in \mathcal{S}(\R^3)$, $\zeta_i \notin \text{supp} (\hat{J})$, we can explain $\delta_S$ as a Fourier multiplier
\begin{equation*}
\int_{\R^3} \delta_S(\xi) e^{ix.\xi} \hat{J}(\xi) d\xi = \int_S e^{ix.\xi} \hat{J}(\xi) d\sigma(\xi).
\end{equation*}
The density follows by Littlewood-Paley theory. As a consequence of Sections
\ref{section:CurvatureBoundedBelow} and \ref{section:SingularPoints}, we have
\begin{equation*}
\left\| \int_S e^{ix.\xi} \hat{J}(\xi) d\sigma(\xi) \right\|_{L^q(\R^3)} \lesssim \| J \|_{L^p(\R^3)}
\end{equation*}
for $p$ and $q$ as in Proposition \ref{prop:SingularEstimate} with a bound independent of the support of $\hat{J}$. This allows to extend $\mathcal{F}^{-1} \delta_S \mathcal{F}: L^p(\R^3) \to L^q(\R^3)$ by density. Likewise, we can explain
\begin{equation*}
v.p. \int_{\R^3} \frac{e^{ix.\xi} \beta(\xi)}{p(\omega,\xi)} \hat{J}(\xi) d\xi
\end{equation*}
with $\beta \in C^\infty_c(\R^3)$ for $J \in \mathcal{S}(\R^3)$ and $\zeta_1,\ldots,\zeta_4 \notin \text{supp
} (\hat{J})$.
This explains the formula  
\begin{equation*}
\begin{split}
E &= \frac{-i \pi}{(2 \pi)^3\eps_1\eps_2\eps_3} \int_{\R^3} \delta_{S}(\xi) e^{ix.\xi} 
  Z_{\eps,\mu}(\xi)\big(-i \omega \hat{J}_e(\xi) + i b(\xi) \mu^{-1} \hat{J}_m(\xi) \big) d\xi \\
&\quad + \frac{1}{(2 \pi)^3\eps_1\eps_2\eps_3} v.p. \int_{\R^3}
\frac{e^{ix.\xi}}{p(\omega,\xi)} Z_{\eps,\mu}(\xi)  
\big( - i \omega \hat{J}_e(\xi) + ib(\xi) \mu^{-1} \hat{J}_m(\xi) \big) d\xi, \\
H&= \frac{-i \pi}{(2 \pi)^3\mu_1\mu_2\mu_3} \int_{\R^3} \delta_{S}(\xi) e^{ix.\xi}Z_{\mu,\eps}(\xi)
\big( i  b(\xi) \varepsilon^{-1} \hat{J}_e(\xi) + i \omega \hat{J}_m(\xi) \big) d\xi \\
&\quad + \frac{1}{(2 \pi)^3\mu_1\mu_2\mu_3 } v.p. \int_{\R^3}
\frac{e^{ix.\xi}}{p(\omega,\xi)} Z_{\mu,\eps}(\xi) \big( i
 b(\xi) \varepsilon^{-1} \hat{J}_e(\xi) + i \omega  \hat{J}_m(\xi) \big) d\xi.
\end{split}
\end{equation*}
for solutions to anisotropic Maxwell's equations. Notice that in these formulae we may replace 
the matrices $Z_{\eps,\mu}(\xi),Z_{\mu,\eps}(\xi)$ by the corresponding effective matrices. 
 
\section{Properties of the Fresnel surface}
\label{section:FresnelSurface}

As explained above, the set $\{\xi\in\R^3: p(\omega,\xi)=0\}$ plays a decisive role for our
analysis. This classical quartic surface is known as Fresnel's surface initially discovered by Augustin-Jean
Fresnel in 1822 to describe the phenomenon of double refraction. In an optically anisotropic medium, e.g., a
biaxial crystal, Fresnel's surface corresponds to Huygen's elementary spherical wave surfaces in isotropic media.
This surface was already studied in the 19th century by Darboux \cite{Darboux1993}. For an
account on classical references we refer to the survey by Kn\"orrer \cite{Knoerrer1986}. In the present context the curvature
properties will be most important, which were collected by Liess~\cite[Appendix]{Liess1991}. We think it is
worthwhile to elaborate on Liess's presentation, as we shall also discuss first and second fundamental form in suitable coordinates.

We recall the key properties of Fresnel's wave surface 
$$
  S=\{\xi\in\R^3: p(\omega,\xi)=0\},\quad  
  p(\omega,\xi)= - \omega^2 (\omega^4 - \omega^2 q_0(\xi) + q_1(\xi))
$$
and 
\begin{align*}
q_0(\xi) &= \xi_1^2 \big( \frac{1}{\varepsilon_2 \mu_3} + \frac{1}{\mu_2 \varepsilon_3} \big) + \xi_2^2 \big( \frac{1}{\varepsilon_1 \mu_3} + \frac{1}{\mu_1 \varepsilon_3} \big) + \xi_3^2 \big( \frac{1}{\varepsilon_1 \mu_2} + \frac{1}{\varepsilon_2 \mu_1} \big), \\
q_1(\xi) &= \frac{1}{\varepsilon_1 \varepsilon_2 \varepsilon_3 \mu_1 \mu_2 \mu_3} (\varepsilon_1 \xi_1^2 + \varepsilon_2 \xi_2^2 + \varepsilon_3 \xi_3^2) (\mu_1 \xi_1^2 + \mu_2 \xi_2^2 + \mu_3 \xi_3^2).
\end{align*}  
Recall that we assume full anisotropy~\eqref{eq:FullAnisotropy}.  
We first notice that we can reduce our analysis to the case  $\mu_1=\mu_2=\mu_3=\omega=1$. This results from
the change of coordinates $\xi\to \eta$ given by 
$$
  \eta_i  = \frac{\xi_i}{\omega\sqrt{\mu_{i+1}\mu_{i+2}}} \qquad (i=1,2,3) 
$$
Notice that this change of coordinates results from a suitable dilation of the coordinates, which corresponds
to an appropriate dilation in physical space. To see the equivalence, let us introduce the corresponding
quantities for $\mu_1=\mu_2=\mu_3=\omega=1$, namely $\mathcal N(\eta):= 1-q_0^*(\eta)+q_1^*(\eta)$ where 
\begin{align*}
  q_0^*(\eta)
  &=  \eta_1^2( \frac{1}{\varepsilon_2}+\frac{1}{\varepsilon_3})
  + \eta_2^2( \frac{1}{\varepsilon_1}+\frac{1}{\varepsilon_3})
  + \eta_3^2( \frac{1}{\varepsilon_1}+\frac{1}{\varepsilon_2}),  \\
  q_1^*(\eta)
  &= \frac{1}{\varepsilon_1\varepsilon_2\varepsilon_3}(\varepsilon_1\eta_1^2+\varepsilon_2\eta_2^2+\varepsilon_3\eta_3^2)(\eta_1^2+\eta_2^2+\eta_3^2).
\end{align*} 
Then one observes $\omega^4 \mathcal N(\eta)=p(\omega,\xi)$, hence the qualitative properties of Fresnel's
surface in the special case $\mu_1=\mu_2=\mu_3=\omega=1$ carry over to the general case. 
For this reason we focus on the analysis of 
$$
  S^* = \{ \eta\in\R^3: \mathcal N(\eta) = 1 -q_0^*(\eta)+q_1^*(\eta)=0\}. 
$$
Notice that~\eqref{eq:FullAnisotropy} then reads
$$
  \eps_1\neq \eps_2\neq \eps_3\neq \eps_1.
$$
In the following we write
\begin{equation*}
\eps_{i+1} \in \langle\eps_i,\eps_{i+2}\rangle \quad\text{if}\quad \eps_i<\eps_{i+1}<\eps_{i+2} \text{ or }
\eps_{i+2}<\eps_{i+1}<\eps_i.
\end{equation*}

\medskip

We first show that $S^*$ is a smooth manifold away from four singular points. To see this, we compute
$$
  \nabla \cN(\eta) = \vecIII{t_1(\eta)\eta_1}{t_2(\eta)\eta_2}{t_3(\eta)\eta_3},\quad
  t_i(\eta) = - \frac{2}{\varepsilon_{i+1}}-\frac{2}{\varepsilon_{i+2}} +
  \frac{2\varepsilon_i|\eta|^2+2(\varepsilon_1\eta_1^2+\varepsilon_2\eta_2^2+\varepsilon_3\eta_3^2)}{\varepsilon_1\varepsilon_2\varepsilon_3}.
$$

\begin{definition}
  A point $\eta\in S^*$ is called singular if $\nabla \cN(\eta)=0$.
   The set of singular points is denoted by $\Sigma$.
\end{definition}

The reason for this definition is that $S^* \sm\Sigma$ is a smooth manifold, whereas the neighbourhood of the  
singular points require a separate analysis. It turns out that there are precisely four singular
points. This is a consequence of the following result.

\begin{proposition} \label{prop:singularpoints}
  The set of singular points consists of all $\eta\in S^*$ such that 
  $$
    \eta_i^2 = \frac{\varepsilon_{i+2} (\varepsilon_i-\varepsilon_{i+1})}{\varepsilon_i - \varepsilon_{i+2}},
    \qquad \eta_{i+1}=0,
    \qquad 
    \eta_{i+2}^2 = \frac{\varepsilon_i(\varepsilon_{i+2} -\varepsilon_{i+1})}{\varepsilon_{i+2} -\varepsilon_i},
  $$
  where $i\in\{1,2,3\}$ is uniquely determined by $\varepsilon_{i+1}\in
  \langle\varepsilon_i,\varepsilon_{i+2}\rangle$.
\end{proposition}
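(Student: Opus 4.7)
The plan is to proceed by a case distinction based on how many of the three components $\eta_1, \eta_2, \eta_3$ vanish. The formula $\nabla \mathcal{N}(\eta) = (t_i(\eta)\eta_i)_{i=1,2,3}$ shows that $\nabla \mathcal{N}(\eta) = 0$ is equivalent to the condition that for each $i$, either $\eta_i = 0$ or $t_i(\eta) = 0$. Since $\mathcal{N}(0) = 1 \neq 0$, the case $\eta = 0$ is immediately ruled out, leaving the subcases with one, two, or three nonvanishing components.

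In the subcase where all three components are nonzero one needs $t_1 = t_2 = t_3 = 0$. I would exploit that the nonlinear quantity $\varepsilon_1\eta_1^2 + \varepsilon_2\eta_2^2 + \varepsilon_3\eta_3^2$ cancels cleanly out of every difference, yielding (for $\{i,j,k\} = \{1,2,3\}$) the identity
$$t_i(\eta) - t_j(\eta) = \frac{2(\varepsilon_j - \varepsilon_i)}{\varepsilon_i\varepsilon_j}\Bigl(1 - \frac{|\eta|^2}{\varepsilon_k}\Bigr).$$
Under full anisotropy~\eqref{eq:FullAnisotropy} the prefactors never vanish, forcing $|\eta|^2 = \varepsilon_1 = \varepsilon_2 = \varepsilon_3$, a contradiction.

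In the decisive subcase where exactly one coordinate vanishes, say $\eta_{i+1} = 0$ with $\eta_i, \eta_{i+2} \neq 0$, the conditions $t_i(\eta) = t_{i+2}(\eta) = 0$ amount to a linear $2\times 2$ system in $(\eta_i^2, \eta_{i+2}^2)$. The same subtraction trick gives $\eta_i^2 + \eta_{i+2}^2 = \varepsilon_{i+1}$, and substituting this back into $t_i = 0$ yields $\varepsilon_i \eta_i^2 + \varepsilon_{i+2}\eta_{i+2}^2 = \varepsilon_i \varepsilon_{i+2}$. Since $\varepsilon_i \neq \varepsilon_{i+2}$, the system is invertible and its unique solution is exactly the pair of formulas in the proposition. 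Positivity of $\eta_i^2$ and $\eta_{i+2}^2$ then forces $\varepsilon_{i+1}$ to lie strictly between $\varepsilon_i$ and $\varepsilon_{i+2}$, which under~\eqref{eq:FullAnisotropy} fixes the index $i$ uniquely and accounts for the uniqueness assertion. A short substitution using both linear relations finally shows $\mathcal{N}(\eta) = 0$ automatically, so these points indeed belong to $S^*$.

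The remaining subcase of two vanishing coordinates is the one that looks most at risk of producing spurious extra singular points, and here I see the only real delicacy. For definiteness take $\eta_2 = \eta_3 = 0$ and $\eta_1 \neq 0$; then $t_1(\eta) = 0$ forces $\eta_1^2 = (\varepsilon_2 + \varepsilon_3)/2$. On the other hand, the restriction of $\mathcal{N}$ to the $\eta_1$-axis factors cleanly as
$$\mathcal{N}(\eta_1,0,0) = \frac{(\eta_1^2 - \varepsilon_2)(\eta_1^2 - \varepsilon_3)}{\varepsilon_2 \varepsilon_3},$$
so membership in $S^*$ requires $\eta_1^2 \in \{\varepsilon_2, \varepsilon_3\}$. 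These two requirements are incompatible unless $\varepsilon_2 = \varepsilon_3$, which is excluded by~\eqref{eq:FullAnisotropy}. The analogous argument on the $\eta_2$- and $\eta_3$-axes closes this subcase. Beyond this factorisation step, I expect no significant obstacle: the whole argument reduces to elementary linear algebra once the case distinction is in place.
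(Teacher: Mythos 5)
Your proof is correct; the only thing I would double-check is the subtraction identity, and it does hold: with $\{i,j,k\}=\{1,2,3\}$, the terms $-\tfrac{2}{\varepsilon_k}$ cancel and $\varepsilon_1\eta_1^2+\varepsilon_2\eta_2^2+\varepsilon_3\eta_3^2$ drops out, leaving $t_i-t_j=\tfrac{2(\varepsilon_j-\varepsilon_i)}{\varepsilon_i\varepsilon_j}\bigl(1-\tfrac{|\eta|^2}{\varepsilon_k}\bigr)$ exactly as you wrote.

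The case decomposition you use (zero, one, two, or three nonvanishing components) mirrors the paper's, but your mechanism for the hardest subcase is genuinely different and in fact more economical. Where the paper rules out three nonvanishing components by assembling the relations $t_1=t_2=t_3=0$ into a singular $3\times 3$ linear system $M(\eta_1^2,\eta_2^2,\eta_3^2)^T = b$ and multiplying by $\mathrm{adj}(M)$ to reach a contradiction, you instead observe that the \emph{differences} $t_i-t_j$ close up into the displayed one-line identity, from which $|\eta|^2=\varepsilon_k$ for two distinct $k$ follows at once. This sidesteps the adjugate computation entirely. You then reuse the same subtraction trick to obtain $|\eta|^2=\varepsilon_{i+1}$ in the one-zero case, making that derivation tidier than the paper's ``Elementary Linear Algebra shows\ldots''; and your factorisation $\mathcal{N}(\eta_1,0,0)=(\eta_1^2-\varepsilon_2)(\eta_1^2-\varepsilon_3)/(\varepsilon_2\varepsilon_3)$ makes explicit what the paper leaves as a one-line assertion in the two-zero case. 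The trade-off is that the paper's $\mathrm{adj}(M)$ computation displays the full structure of the system and the exact quantities that fail to vanish, whereas your approach is leaner but relies on spotting the cancellation. Both are complete; yours is the shorter read.
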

\begin{proof}
  We have to prove that each solution of $\nabla \cN(\eta)=(t_1(\eta)\eta_1,t_2(\eta)\eta_2,t_3(\eta)\eta_3) =
  (0,0,0)$ satisfies the above conditions. We first show $\eta_j=0$ for some $j\in\{1,2,3\}$. Otherwise, we
  would have $t_1(\eta)=t_2(\eta)=t_3(\eta)=0$, and thus for $j\in\{1,2,3\}$,
  $$
    2\varepsilon_j\eta_j^2+(\varepsilon_j+\varepsilon_{j+1})\eta_{j+1}^2+(\varepsilon_j+\varepsilon_{j+2})\eta_{j+2}^2
    =  \varepsilon_j(\varepsilon_{j+1}+\varepsilon_{j+2}). 
  $$
  Hence,
  $$
    \underbrace{\matIII{2\varepsilon_1}{\varepsilon_1+\varepsilon_2}{\varepsilon_1+\varepsilon_3}{
    \varepsilon_1+\varepsilon_2}{2\varepsilon_2}{\varepsilon_2+\varepsilon_3}{
    \varepsilon_1+\varepsilon_3}{\varepsilon_2+\varepsilon_3}{2\varepsilon_3}}_{=:M} \vecIII{\eta_1^2}{\eta_2^2}{\eta_3^2} 
    =  \vecIII{\varepsilon_1(\varepsilon_2+\varepsilon_3)}{\varepsilon_2 
    (\varepsilon_1+\varepsilon_3)}{\varepsilon_3(\varepsilon_1+\varepsilon_2)}. 
  $$
  The adjugate matrix of $M$ is given by 
  $$
    adj(M)
    = \matIII{-(\varepsilon_2-\varepsilon_3)^2}{(\varepsilon_3-\varepsilon_1)(\varepsilon_3-\varepsilon_2)}{
    (\varepsilon_2-\varepsilon_1)(\varepsilon_2-\varepsilon_3)}{
    (\varepsilon_3-\varepsilon_1)(\varepsilon_3-\varepsilon_2)}{-(\varepsilon_1-\varepsilon_3)^2}{(\varepsilon_1-\varepsilon_2)
    (\varepsilon_1-\varepsilon_3)}{ (\varepsilon_2-\varepsilon_1)(\varepsilon_2-\varepsilon_3)}{
    (\varepsilon_1-\varepsilon_2)(\varepsilon_1-\varepsilon_3)}{-(\varepsilon_1-\varepsilon_2)^2}. 
  $$
  Multiplying this equation with $adj(M)$ and using $adj(M)M=\det(M)I_3=0$ we get
  \begin{align*}
    0
    &= adj(M)M \vecIII{\eta_1^2}{\eta_2^2}{\eta_3^2}  
    =  adj(M) 
    \vecIII{\varepsilon_1(\varepsilon_2+\varepsilon_3)}{\varepsilon_2(\varepsilon_1+\varepsilon_3)}{ \varepsilon_3(\varepsilon_1+\varepsilon_2)} \\
    &=  \vecIII{(\varepsilon_2-\varepsilon_3)^2(\varepsilon_1-\varepsilon_2)(\varepsilon_1-\varepsilon_3)}{
    (\varepsilon_1-\varepsilon_3)^2(\varepsilon_2-\varepsilon_1)(\varepsilon_2-\varepsilon_3)}{
    (\varepsilon_1-\varepsilon_2)^2(\varepsilon_3-\varepsilon_1)(\varepsilon_3-\varepsilon_2)}.
  \end{align*}
  Since this is impossible due to the full anisotropy, we conclude $\eta_j=0$ for some $j\in\{1,2,3\}$.
  
  \medskip
  
  Next we show that only one coordinate of $\eta$ vanishes. First, $\eta_1=\eta_2=\eta_3=0$ is impossible in
  view of $\eta\in S^*=\{\cN(\eta)=0\}$ and $\cN(0,0,0)=1\neq 0$. So we argue by contradiction and suppose that
  $\eta_{j+1}=\eta_{j+2}=0$ and $\eta_j\neq 0,t_j(\eta)=0$ for some $j\in\{1,2,3\}$. In view of the formula
  for $t_j$ this implies $2\eta_j^2=\varepsilon_{j+1}+\varepsilon_{j+2}$. Inserting this into $\cN(\eta)=0$, we obtain $\varepsilon_{j+1}=\varepsilon_{j+2}$ as a necessary
  condition, which contradicts our assumption of full anisotropy. Hence, precisely one
  coordinate vanishes, say $\eta_{j+1}=0,t_j(\eta)=t_{j+2}(\eta)=0, \eta_{j},\eta_{j+2}\neq 0$ for $j\in\{1,2,3\}$. Elementary Linear
  Algebra shows that these conditions are equivalent to 
  $$
    \eta_j^2 = \frac{\varepsilon_{j+2} (\varepsilon_j-\varepsilon_{j+1})}{\varepsilon_j - \varepsilon_{j+2}},
    \qquad \eta_{j+1}=0,
    \qquad  \eta_{j+2}^2 = \frac{\varepsilon_j(\varepsilon_{j+2} -\varepsilon_{j+1})}{\varepsilon_{j+2} -\varepsilon_j}.
  $$ 
  Since the expressions on the right hand-side are positive if and only if $\varepsilon_{j+1}\in
  \langle\varepsilon_j,\varepsilon_{j+2}\rangle$, we get the claim.
\end{proof}

  In particular, the Gaussian curvature is well-defined and smooth on $S^*\sm \Sigma$, i.e.,  away from
  the four singular points. We now introduce the explicit parametrization of $S^*$ by Darboux and Liess
  (\cite[A3]{Liess1991}). Our parameters $(s,t)$ correspond to $(\beta,\alpha')$ in Liess' work. As
  in~\cite{Liess1991}, this parametrization is given away from the four singular points and the principal sections $S\cap \{\eta_i=0\}$.
  
  \begin{proposition}
    Let $\sigma_1,\sigma_2,\sigma_3\in\{-1,+1\}$. Then
    a smooth parametrization of $(S^* \sm \Sigma)\cap \bigcap_{i=1}^3 \{\sigma_i \eta_i>0\}$ is given by 
    $$
      \Phi_i(s,t) 
      :=  \sigma_i \sqrt{\frac{\varepsilon_1\varepsilon_2\varepsilon_3(\varepsilon_i-s)(t^{-1}-\varepsilon_i^{-1})}{(\varepsilon_i-\varepsilon_{i+1})(\varepsilon_i-\varepsilon_{i+2})}}
      \qquad (i=1,2,3).
    $$
    For $j\in\{1,2,3\}$ such that $\eps_j<\eps_{j+1}<\eps_{j+2}$ we either have
    $\eps_j<s<\eps_{j+1}<t<\eps_{j+2}$ or $\eps_j<t<\eps_{j+1}<s<\eps_{j+2}$.
  \end{proposition}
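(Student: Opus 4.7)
The plan is to verify in three steps that the proposed map is (a) well-defined and smooth, (b) takes values in $S^*$, and (c) is a bijection onto the specified piece of $S^*\setminus \Sigma$.

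For (a), I would rewrite the second factor as $(t^{-1}-\varepsilon_i^{-1}) = (\varepsilon_i - t)/(\varepsilon_i t)$ and trace the signs of $(\varepsilon_i - s)(\varepsilon_i - t)$ against $(\varepsilon_i - \varepsilon_{i+1})(\varepsilon_i - \varepsilon_{i+2})$ for the three indices $i\in\{j,j+1,j+2\}$ under either parameter interlacing. In each of the three cases the radicand comes out strictly positive, so $\Phi_i(s,t)$ is smooth and $\sigma_i\Phi_i(s,t)>0$.

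For (b), the key algebraic input is the Lagrange interpolation of the polynomial $(x-s)(x-t)$ at the nodes $\varepsilon_1,\varepsilon_2,\varepsilon_3$. Setting $a_i := (\varepsilon_i - s)(\varepsilon_i - t)/\prod_{k\neq i}(\varepsilon_i - \varepsilon_k)$ and comparing the coefficients of $x^2, x, x^0$ yields, with $T_1:=\varepsilon_1+\varepsilon_2+\varepsilon_3$ and $T_3:=\varepsilon_1\varepsilon_2\varepsilon_3$,
$$
\sum_i a_i = 1, \qquad \sum_i a_i\varepsilon_i = T_1 - s - t, \qquad \sum_i \frac{a_i}{\varepsilon_i} = \frac{st}{T_3}.
$$
Since $\Phi_i(s,t)^2 = T_3 a_i/(\varepsilon_i t)$, these three identities give
$$
|\Phi|^2 = s, \qquad \sum_i \varepsilon_i \Phi_i^2 = \frac{T_3}{t}, \qquad \sum_i \varepsilon_i^2 \Phi_i^2 = \frac{T_3(T_1 - s - t)}{t}.
$$
Using $\varepsilon_{i+1}^{-1}+\varepsilon_{i+2}^{-1} = \varepsilon_i(T_1-\varepsilon_i)/T_3$, I then compute $q_1^*(\Phi) = s/t$ and $q_0^*(\Phi) = 1 + s/t$, so $\mathcal{N}(\Phi) = 1 - q_0^*(\Phi) + q_1^*(\Phi) = 0$.

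For (c), injectivity is immediate: from $\Phi(s,t)$ the parameters are recovered by $s = |\Phi|^2$ and $t = T_3/\sum_i \varepsilon_i \Phi_i^2$. For surjectivity onto the asserted subset, given $\eta$ in that set I define $s,t$ by the same formulas and read $\mathcal{N}(\eta)=0$ as a linear equation in the $\eta_i^2$ (since $q_1^*(\eta) = s/t$ is already known). Together with the definitions of $s$ and $t$, this yields a $3\times 3$ linear system for $(\eta_1^2,\eta_2^2,\eta_3^2)$ whose coefficient matrix reduces by elementary operations to a Vandermonde matrix in $\varepsilon_1,\varepsilon_2,\varepsilon_3$, hence is invertible by full anisotropy. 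Since $\Phi(s,t)$ satisfies the same three equations by step (b), one concludes $\eta_i^2 = \Phi_i(s,t)^2$ and the signs are forced by $\sigma_i$. The interlacement of $(s,t)$ with $\varepsilon_j,\varepsilon_{j+1},\varepsilon_{j+2}$ follows from $\Phi_i^2>0$ for all $i$, which forces $\varepsilon_{j+1}$ to lie strictly between $s$ and $t$ while both lie in $(\varepsilon_j,\varepsilon_{j+2})$. The only mildly non-routine point is the algebraic identity in step (b); Lagrange interpolation makes it transparent, whereas the sign bookkeeping in (a) and (c) is a straightforward case distinction.
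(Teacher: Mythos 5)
Your proposal is correct and follows essentially the same route as the paper: verify the three scalar identities $|\Phi|^2 = s$, $\sum_i\varepsilon_i\Phi_i^2 = \varepsilon_1\varepsilon_2\varepsilon_3/t$, $q_1^*(\Phi)=s/t$ (hence $\mathcal N(\Phi)=0$), recover $(s,t)$ from $\eta$ by the same formulas, and invert the resulting linear system in $(\eta_1^2,\eta_2^2,\eta_3^2)$ to get surjectivity, with the interlacing of $s,t,\varepsilon_j$ forced by positivity of the radicands. The Lagrange-interpolation identity for $(x-s)(x-t)$ is a clean way to organize the computations that the paper dismisses as "lengthy but straightforward," but it does not change the structure of the argument.
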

  \begin{proof}
    If we define $\eta:=\Phi(s,t)\in\R^3$, then one can subsequently verify
    \begin{align*}
      \eta_1^2+\eta_2^2+\eta_3^2
      &= s, \quad
      \varepsilon_1\eta_1^2+\varepsilon_2\eta_2^2+\varepsilon_3\eta_3^2
      =  \varepsilon_1\varepsilon_2\varepsilon_3 t^{-1}, \\
       \quad
      q_1^*(\eta) 
      &=  st^{-1},\quad
      q_0^*(\eta)
      = 1+st^{-1}.
    \end{align*}
    This implies 
    $\cN(\eta)= 1-  (1+st^{-1})+ st^{-1}=0$, 
    which proves $\Phi(s,t)\in S^*$ for all $s,t$ such that the argument of the square root is positive. On the other hand,
    every point of $(S^*\sm \Sigma)\cap \bigcap_{i=1}^3 \{\sigma_i \eta_i>0\}$ can be written in this way.
    To see this, one solves the linear system
    \begin{align*}
      s&=\eta_1^2+\eta_2^2+\eta_3^2,\qquad 
      \varepsilon_1\eta_1^2+\varepsilon_2\eta_2^2+\varepsilon_3\eta_3^2
      = \varepsilon_1\varepsilon_2\varepsilon_3 t^{-1}, \\
      0 
      &= \mathcal N(\eta) 
      = 1- \eta_1^2( \frac{1}{\varepsilon_2}+\frac{1}{\varepsilon_3})
        - \eta_2^2( \frac{1}{\varepsilon_1}+\frac{1}{\varepsilon_3})
       - \eta_3^2( \frac{1}{\varepsilon_1}+\frac{1}{\varepsilon_2})  
       + \frac{s}{t} 
    \end{align*}
    for $\eta_1^2,\eta_2^2,\eta_3^2$. In this way one finds $\eta_i^2=\Phi_i(s,t)^2$, so
    $\Phi$ is a smooth parametrization of the set $(S \sm \Sigma)\cap \bigcap_{i=1}^3 \{\sigma_i \eta_i>0\}$.
    A computation shows that $\Phi=(\Phi_1,\Phi_2,\Phi_3)$ is well-defined (the arguments of all square roots
    are positive) if and only if either $\eps_j<s<\eps_{j+1}<t<\eps_{j+2}$ or 
    $\eps_j<t<\eps_{j+1}<s<\eps_{j+2}$ holds provided that $\eps_j<\eps_{j+1}<\eps_{j+2}$.
  \end{proof}
  
   We note that the two parameter regions $\eps_j<s<\eps_{j+1}<t<\eps_{j+2}$ and
   $\eps_j<t<\eps_{j+1}<s<\eps_{j+2}$ give rise to the inner, respectively, outer sheet of the wave surface,
   cf. Figure~\ref{fig:Fresnel}. Both sheets meet at the singular points that formally correspond to
   $s=t=\eps_{j+1}$ where, in accordance with Proposition~\ref{prop:singularpoints}, one has
   $\eta_{j+1}=\Phi_{j+1}(s,t)=0$.
 We now turn towards the computation of the Gaussian curvature on $S^* \sm \Sigma$. This will first be done
 away from the principal sections, but the formula will prevail also in the principal sections since $S$ is a
 smooth manifold in that region as we showed above. We start with computing the relevant derivatives for the
 first and second fundamental form of $S^*$:
 \begin{align*}
   \partial_s\Phi_i(s,t)
   &= \frac{1}{2(s-\varepsilon_i)}\Phi_i,\quad
   \partial_t\Phi_i(s,t)
   = \frac{\varepsilon_i}{2t(t-\varepsilon_i)}\Phi_i,\quad
   \partial_{ss}\Phi_i(s,t)
   = -\frac{1}{4(s-\varepsilon_i)^2}\Phi_i,\\
   &\partial_{st}\Phi_i(s,t)
   = \frac{\varepsilon_i}{4t(t-\varepsilon_i)(s-\varepsilon_i)}\Phi_i, \qquad
   \partial_{tt}\Phi_i(s,t)
   = \frac{\varepsilon_i(3\varepsilon_i-4t)}{4t^2(t-\varepsilon_i)^2}\Phi_i.
 \end{align*}
 From these formulae one gets the following.
 
 \begin{proposition}
 \label{prop:FirstFundamentalForm}
   The first fundamental form of $S^* \sm \Sigma$ is given by 
   $$
     E(s,t)ds^2+2F(s,t)\,ds\,dt+G(s,t)\,dt^2,
   $$ 
   where
   \begin{align*}
     E(s,t)
     &= \frac{s^2t-(\varepsilon_1+\varepsilon_2+\varepsilon_3)st + (\varepsilon_1\varepsilon_2+\varepsilon_1\varepsilon_3+\varepsilon_2\varepsilon_3)t-\varepsilon_1\varepsilon_2\varepsilon_3}{4t(s-\varepsilon_1)(s-\varepsilon_2)(s-\varepsilon_3)}, 
     \\
     F(s,t)
     &= 0, \\
     G(s,t)
      &= \frac{\varepsilon_1\varepsilon_2\varepsilon_3(s-t)}{4t^2(t-\varepsilon_1)(t-\varepsilon_2)(t-\varepsilon_3)}.   
   \end{align*}
 \end{proposition}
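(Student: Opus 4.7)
The three coefficients are $E=\sum_i(\partial_s\Phi_i)^2$, $F=\sum_i\partial_s\Phi_i\,\partial_t\Phi_i$, and $G=\sum_i(\partial_t\Phi_i)^2$. Plugging in the derivative formulas just listed and using that
$$
\Phi_i(s,t)^2 = \frac{\eps_1\eps_2\eps_3\,(\eps_i-s)(\eps_i-t)}{\eps_i\,t\,P'(\eps_i)}, \qquad P(x):=(x-\eps_1)(x-\eps_2)(x-\eps_3),
$$
each coefficient becomes a weighted Lagrange-type sum $\sum_{i=1}^3 h(\eps_i)/P'(\eps_i)$ for a simple rational function $h$. The single tool I will use to evaluate these is the standard partial-fraction identity
$$
\sum_{i=1}^3 \frac{1}{(a-\eps_i)P'(\eps_i)} = \frac{1}{P(a)},
$$
together with its degree-count specialization $\sum_i 1/P'(\eps_i)=0$ (which expresses that the residues of $1/P$ sum to zero).

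For $F$ I plan to show that after the prefactors cancel one is left with exactly $F = \frac{\eps_1\eps_2\eps_3}{4t^2}\sum_i 1/P'(\eps_i) = 0$. For $E$ the relevant sum, after pulling out $\varepsilon_1\eps_2\eps_3/(4t)$, involves $h(x)=(t-x)/(x(s-x))$. I will split this via partial fractions as $h(x) = \tfrac{t}{sx} + \tfrac{s-t}{s(x-s)}$, apply the basic identity to each piece (with $a=0$ and $a=s$), and then combine everything over the common denominator $4t\,P(s)$. Expanding $P(s)$ in elementary symmetric functions of the $\eps_i$ should match the claimed numerator
$s^2t - (\eps_1+\eps_2+\eps_3)st + (\eps_1\eps_2+\eps_1\eps_3+\eps_2\eps_3)t - \eps_1\eps_2\eps_3$ line by line.

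For $G$ the sum to evaluate involves $q(x)=x(x-s)/(t-x)$. I will perform polynomial division to write $q(x) = \ell(x) + t(t-s)/(t-x)$ with $\deg\ell\le 1$; the polynomial piece contributes $0$ to the divided difference since its degree is strictly less than $n-1=2$, so only the Cauchy kernel term survives and gives $\sum_i q(\eps_i)/P'(\eps_i) = t(t-s)/P(t)$. Multiplying by the prefactor $-\eps_1\eps_2\eps_3/(4t^3)$ coming from the $(\partial_t\Phi_i)^2$ calculation, and keeping track of the cancellation $(\eps_i-t)/(t-\eps_i)^2 = -1/(t-\eps_i)$, yields the stated formula $G=\eps_1\eps_2\eps_3(s-t)/(4t^2 P(t))$.

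The computations are otherwise mechanical; the only real pitfall is keeping sign conventions consistent when toggling between $s-\eps_i$ vs.\ $\eps_i-s$, $t-\eps_i$ vs.\ $\eps_i-t$, and the factor $t^{-1}-\eps_i^{-1} = (\eps_i-t)/(\eps_i t)$ hidden inside $\Phi_i^2$. These account for essentially all the algebraic friction and are the place where I would double-check the final result against the identity $\cN(\Phi(s,t))=0$ recorded in the previous proposition.
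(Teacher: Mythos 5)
Your proposal is correct and follows the same fundamental approach as the paper: both start from the inner products $E=\langle\partial_s\Phi,\partial_s\Phi\rangle$, $F=\langle\partial_s\Phi,\partial_t\Phi\rangle$, $G=\langle\partial_t\Phi,\partial_t\Phi\rangle$ expressed as $\sum_i \Phi_i^2/(4(s-\eps_i)^2)$, $\sum_i \eps_i\Phi_i^2/(4t(t-\eps_i)(s-\eps_i))$, $\sum_i \eps_i^2\Phi_i^2/(4t^2(t-\eps_i)^2)$. Where the paper simply declares the evaluation of these sums "lengthy but straightforward," you supply a concrete and efficient organizing principle: rewrite $\Phi_i^2 = e_3(\eps_i-s)(\eps_i-t)/(\eps_i\, t\, P'(\eps_i))$ and reduce everything to the classical residue/Lagrange identities $\sum_i 1/((a-\eps_i)P'(\eps_i))=1/P(a)$ and $\sum_i \eps_i^k/P'(\eps_i)=0$ for $k<2$. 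I verified the three evaluations: $F$ collapses to $\frac{e_3}{4t^2}\sum_i 1/P'(\eps_i)=0$; the partial-fraction split of $h(x)=(t-x)/(x(s-x))$ gives $E = \frac{1}{4s}+\frac{e_3(t-s)}{4stP(s)}$, and clearing denominators recovers the stated numerator $s^2t-e_1st+e_2t-e_3$ exactly; the division $x(x-s)=(t-x)(s-t-x)+t(t-s)$ makes the linear part of $q$ drop out, leaving $G=-\frac{e_3}{4t^3}\cdot\frac{t(t-s)}{P(t)}=\frac{e_3(s-t)}{4t^2P(t)}$. All signs are tracked correctly, including the recurring cancellations $(\eps_i-s)/(s-\eps_i)^2=-1/(s-\eps_i)$ and the evaluation $P(0)=-e_3$ giving $\sum_i 1/(\eps_i P'(\eps_i))=1/e_3$. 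This is a clean completion of the computation the paper only sketches.
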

 \begin{proof}
   This follows from lengthy, but straightforward computations based on 
     \begin{align*}
     E(s,t)
     &= \skp{\partial_s\Phi(s,t)}{\partial_s\Phi(s,t)}
     = \sum_{i=1}^3  \frac{\Phi_i(s,t)^2}{4(s-\varepsilon_i)^2} ,\\
     F(s,t)
     &= \skp{\partial_s\Phi(s,t)}{\partial_t\Phi(s,t)}
     = \sum_{i=1}^3  \frac{\varepsilon_i\Phi_i(s,t)^2}{4t(t-\varepsilon_i)(s-\varepsilon_i)},  \\
     G(s,t)
     &=  \skp{\partial_t\Phi(s,t)}{\partial_t\Phi(s,t)}
      = \sum_{i=1}^3 \frac{\varepsilon_i^2\Phi_i(s,t)^2}{4t^2(t-\varepsilon_i)^2}. 
   \end{align*}
 \end{proof}
 
 To write down the second fundamental form, we introduce the following functions:
 \begin{align*}
     m(s,t)&:=\left(\frac{\varepsilon_1\varepsilon_2\varepsilon_3}{(t-s)(s^2t-(\varepsilon_1+\varepsilon_2+\varepsilon_3)ts+(\varepsilon_1\varepsilon_2+\varepsilon_1\varepsilon_3+\varepsilon_2\varepsilon_3)t-\varepsilon_1\varepsilon_2\varepsilon_3)}\right)^{1/2},  \\
     P_L(s,t) &:= s^2t-2st^2+(\varepsilon_1+\varepsilon_2+\varepsilon_3)t^2-(\varepsilon_1\varepsilon_2+\varepsilon_1\varepsilon_3+\varepsilon_2\varepsilon_3)t+\varepsilon_1\varepsilon_2\varepsilon_3, \\
     P_N(s,t) &:= -s^2t^2+(\varepsilon_1+\varepsilon_2+\varepsilon_3)st^2-(\varepsilon_1\varepsilon_2+\varepsilon_1\varepsilon_3+\varepsilon_2\varepsilon_3)t^2+\varepsilon_1\varepsilon_2\varepsilon_3(2t-s).
   \end{align*}  
  
 \begin{proposition}\label{prop:SecondFundamentalForm}
   The second fundamental form of $S^* \sm \Sigma$ is given by
   $$
     L(s,t)ds^2+2M(s,t)\,ds\,dt+N(s,t)\,dt^2,
   $$ 
   where
   \begin{align*}
     L(s,t)
     = \frac{m(s,t)P_L(s,t)}{4t(s-\varepsilon_1)(s-\varepsilon_2)(s-\varepsilon_3)}, \quad &M(s,t) 
     = \frac{m(s,t)}{4t},\\
     &N(s,t)
     =  \frac{m(s,t)P_N(s,t)}{ 4t^2(t-\varepsilon_1)(t-\varepsilon_2)(t-\varepsilon_3)}. 
   \end{align*}  
 \end{proposition}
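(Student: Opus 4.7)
My plan is to derive $L, M, N$ from the defining formulas $L = \skp{\partial_{ss}\Phi}{\nu}$, $M = \skp{\partial_{st}\Phi}{\nu}$, $N = \skp{\partial_{tt}\Phi}{\nu}$, choosing the unit normal $\nu = \nabla\cN/|\nabla\cN|$ on the regular set $S^*\sm\Sigma$. Using the expression $(\nabla\cN(\eta))_i = t_i(\eta)\eta_i$ recorded before Definition~3.2, together with the identities $|\Phi(s,t)|^2 = s$ and $\eps_1\Phi_1^2+\eps_2\Phi_2^2+\eps_3\Phi_3^2 = \eps_1\eps_2\eps_3/t$ already established in the proof of the parametrization proposition, the coefficient $t_i$ evaluated at $\eta = \Phi(s,t)$ becomes a polynomial of degree~$2$ in $\eps_i$, namely $t_i(\Phi(s,t)) = 2/t + 2[(s-\sigma_1)\eps_i + \eps_i^2]/(\eps_1\eps_2\eps_3)$ with $\sigma_1 := \eps_1+\eps_2+\eps_3$.

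Combining this with the recorded formulas for $\partial_{ss}\Phi_i$, $\partial_{st}\Phi_i$, $\partial_{tt}\Phi_i$ and with $\Phi_i^2 = \eps_1\eps_2\eps_3(s-\eps_i)(t-\eps_i)/[t\eps_i(\eps_i-\eps_{i+1})(\eps_i-\eps_{i+2})]$ (the parametrization rewritten via $t^{-1}-\eps_i^{-1} = (\eps_i-t)/(t\eps_i)$), each inner product $\skp{\partial_{ab}\Phi}{\nabla\cN}$ reduces to a sum of the form $\sum_{i=1}^3 Q(\eps_i)/[(x-\eps_i)^a\prod_{j\neq i}(\eps_i-\eps_j)]$ with $a\in\{0,1\}$ and $x\in\{s,t\}$. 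The key algebraic tools are the Lagrange-type identities $\sum_{i=1}^3 R(\eps_i)/\prod_{j\neq i}(\eps_i-\eps_j) = [\eps^2]R$ for $\deg R \leq 2$, and $\sum_{i=1}^3 R(\eps_i)/[(x-\eps_i)\prod_{j\neq i}(\eps_i-\eps_j)] = R(x)/\prod_j(x-\eps_j)$ for $\deg R \leq 2$ (with a polynomial-division correction when $\deg R = 3$). In the $st$-case both factors $(s-\eps_i)(t-\eps_i)$ of $\Phi_i^2$ cancel the denominators coming from $\partial_{st}\Phi_i$, so the Lagrange sum collapses to the constant $\skp{\partial_{st}\Phi}{\nabla\cN} = 1/(2t^2)$. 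In the $ss$- and $tt$-cases only one of the two factors cancels and the second identity produces numerators that simplify to $P_L(s,t)/t$ and $P_N(s,t)/t$ over $(s-\eps_1)(s-\eps_2)(s-\eps_3)$ and $t(t-\eps_1)(t-\eps_2)(t-\eps_3)$ respectively, up to the uniform factor~$1/2$.

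The same method applied to $|\nabla\cN|^2 = \sum_i t_i(\Phi)^2\Phi_i^2$ should yield $|\nabla\cN|^2 = 4(t-s)[s^2t-\sigma_1 st + \sigma_2 t - \sigma_3]/(t^2\eps_1\eps_2\eps_3)$, where $\sigma_j$ denotes the $j$th elementary symmetric polynomial in $\eps_1,\eps_2,\eps_3$; equivalently $|\nabla\cN| = 2/(t\,m(s,t))$, up to a sign absorbed into the orientation of $\nu$. Dividing each inner product by $|\nabla\cN|$ then produces $m(s,t)$ as the common factor in $L, M, N$ and confirms the claimed formulae. The only real obstacle is bookkeeping: four rational-function sums in $\mathbb{Q}(s,t,\eps_1,\eps_2,\eps_3)$ must be expanded and simplified, and the emerging cubic in $s$ or $t$ must be matched with $P_L$, $P_N$. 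Since each step is an elementary polynomial identity, this is best delegated to the MAPLE sheet mentioned in the introduction, in the same spirit as the identities for $Z_{\eps,\mu}$ and $p(\omega,\xi)$ verified earlier.
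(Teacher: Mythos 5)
Your proof is correct and follows essentially the same route as the paper: compute $L=\skp{\nu}{\partial_{ss}\Phi}$, $M=\skp{\nu}{\partial_{st}\Phi}$, $N=\skp{\nu}{\partial_{tt}\Phi}$ with $\nu=\nabla\cN/|\nabla\cN|$, using the formulas for $\nabla\cN$, $\Phi_i^2$ and the second-order derivatives of $\Phi$, then normalize via $|\nabla\cN|=2/(tm)$. The Lagrange-interpolation identities you isolate (of which the paper is silent, simply citing "lengthy computations") are indeed the mechanism that collapses the rational sums to the stated $P_L$, $P_N$ and the constant $1/(2t^2)$, and your derived expressions match those in the proposition.
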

 \begin{proof}
   By definition, the functions $L,M,N$ are given by
   \begin{align*}
     L(s,t) = \skp{\nu(s,t)}{\partial_{ss}\Phi(s,t)}, \quad &M(s,t) = \skp{\nu(s,t)}{\partial_{st}\Phi(s,t)}, \\
     &N(s,t) = \skp{\nu(s,t)}{\partial_{tt}\Phi(s,t)}
   \end{align*}
   where $\nu(s,t)$ denotes the outer unit normal on $S \sm \Sigma$ at the point $\Phi(s,t)$.
   In Euclidean coordinates, a normal at $\eta=\Phi(s,t)$ is given by \\
   $\nabla\cN(\eta)=(t_1(\eta)\eta_1,t_2(\eta)\eta_2,t_3(\eta)\eta_3)$. So we define     
   \begin{align*}
     \tilde \nu_i(s,t) 
     &:= 2t_i(\Phi(s,t)) \Phi_i(s,t) 
     =  \left(-\frac{1}{\varepsilon_{i+1}}-\frac{1}{\varepsilon_{i+2}}+\frac{s}{\varepsilon_{i+1}\varepsilon_{i+2}}+\frac{1}{t}\right) \Phi_i(s,t)
   \end{align*}
    and obtain after normalization 
    \begin{align*}
      \nu_i(s,t)
      &=  \frac{m(s,t)t}{2}\tilde\nu_i(s,t).
   \end{align*}
   Using this formula for the unit normal field $\nu$, and plugging in the formulae for
   $\Phi_{ss},\Phi_{st},\Phi_{tt}$, one obtains the above expressions for $L(s,t),M(s,t),N(s,t)$.
 \end{proof}
 
  We continue with the formulae for the Gaussian and mean curvature, which were given in (A.1),(A.2) in
  Liess' work~\cite{Liess1991}.

  \begin{proposition}
    The Gaussian curvature at $\Phi(s,t)\in S^* \sm\Sigma$ is given by
    \begin{align*}
      K(s,t)
      = \frac{ (st-(\varepsilon_1+\varepsilon_2)t+\varepsilon_1\varepsilon_2)  (st-(\varepsilon_1+\varepsilon_3)t+\varepsilon_1\varepsilon_3) (st-(\varepsilon_2+\varepsilon_3)t+\varepsilon_2\varepsilon_3)}{
      (s-t)(s^2t-(\varepsilon_1+\varepsilon_2+\varepsilon_3)st + (\varepsilon_1\varepsilon_2+\varepsilon_1\varepsilon_3+\varepsilon_2\varepsilon_3)t-\varepsilon_1\varepsilon_2\varepsilon_3)^2}. 
    \end{align*}
  \end{proposition}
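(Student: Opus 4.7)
The plan is to compute $K$ directly from the classical formula $K = (LN-M^2)/(EG-F^2)$, feeding in the expressions from Propositions~\ref{prop:FirstFundamentalForm} and~\ref{prop:SecondFundamentalForm}. Since the chosen parametrization is orthogonal, $F\equiv 0$, so the denominator collapses to $EG$. Collecting $LN-M^2$ over the common denominator $16 t^3 \prod_i (s-\varepsilon_i)(t-\varepsilon_i)$ yields
\begin{equation*}
LN-M^2 = \frac{m(s,t)^2\bigl[P_L(s,t)P_N(s,t) - t\prod_{i=1}^3(s-\varepsilon_i)(t-\varepsilon_i)\bigr]}{16\,t^3 \prod_{i=1}^3(s-\varepsilon_i)(t-\varepsilon_i)}.
\end{equation*}

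Writing $Q(s,t) := s^2t - (\varepsilon_1+\varepsilon_2+\varepsilon_3)st + (\varepsilon_1\varepsilon_2+\varepsilon_1\varepsilon_3+\varepsilon_2\varepsilon_3)t - \varepsilon_1\varepsilon_2\varepsilon_3$ for the numerator of $E$, one obtains an equally transparent expression for $EG$. The key simplification is the defining relation $m(s,t)^2 = \varepsilon_1\varepsilon_2\varepsilon_3/((t-s)Q(s,t))$: after substitution, the factor $m^2$ in the numerator cancels $\varepsilon_1\varepsilon_2\varepsilon_3$ in the denominator and one of the two $Q$-factors, while the prefactors $16 t^3 \prod_i(s-\varepsilon_i)(t-\varepsilon_i)$ cancel exactly. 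What remains is
\begin{equation*}
K = \frac{t\prod_{i=1}^3(s-\varepsilon_i)(t-\varepsilon_i) - P_L(s,t)P_N(s,t)}{(s-t)^2 Q(s,t)^2}.
\end{equation*}

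The main obstacle is then the purely algebraic polynomial identity
\begin{equation*}
t\prod_{i=1}^3(s-\varepsilon_i)(t-\varepsilon_i) - P_L(s,t)P_N(s,t) = (s-t)\prod_{1\le i<j\le 3}\bigl(st - (\varepsilon_i+\varepsilon_j)t + \varepsilon_i\varepsilon_j\bigr),
\end{equation*}
whose quotient by $(s-t)$ gives precisely the claimed factored form for $K$. Divisibility of the left-hand side by $(s-t)$ is the first sanity check: a direct computation gives $P_L(s,s) = -\prod_i(s-\varepsilon_i)$ and $P_N(s,s) = -s\prod_i(s-\varepsilon_i)$, so $P_LP_N|_{s=t} = s\prod_i(s-\varepsilon_i)^2$, which matches $t\prod_i(s-\varepsilon_i)(t-\varepsilon_i)|_{s=t}$. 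Both sides are polynomials of bidegree $(4,4)$ in $(s,t)$ with matching coefficient $+1$ at $s^4t^3$, so the identity follows once each linear factor $R_{ij}(s,t) := st-(\varepsilon_i+\varepsilon_j)t+\varepsilon_i\varepsilon_j$ on the right is shown to divide the left-hand side — checked by substituting $s = \varepsilon_i+\varepsilon_j-\varepsilon_i\varepsilon_j/t$ and verifying that $P_LP_N$ reduces to $t\prod_k(s-\varepsilon_k)(t-\varepsilon_k)$ under this constraint. The verification is tedious by hand but routine, and is most efficiently carried out by the CAS whose MAPLE sheet the authors reference in the introduction.
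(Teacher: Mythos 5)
Your proposal follows the paper's own argument exactly: form $K=(LN-M^2)/(EG-F^2)$ from Propositions~\ref{prop:FirstFundamentalForm} and~\ref{prop:SecondFundamentalForm}, substitute $m(s,t)^2=\varepsilon_1\varepsilon_2\varepsilon_3/((t-s)Q(s,t))$, and reduce everything to the single polynomial factorization $P_LP_N-t\prod_i(s-\varepsilon_i)(t-\varepsilon_i)=(t-s)\prod_{i<j}R_{ij}$, which both you and the authors ultimately delegate to the MAPLE verification. The added divisibility/degree-count scaffolding (vanishing at $s=t$, vanishing at the roots of each $R_{ij}$, matching leading coefficient at $s^4t^3$) is a sound and slightly more transparent way to organize that algebraic check, but it does not change the proof strategy.
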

  \begin{proof}
    The determinant of the first fundamental form is given by
    \begin{align*}
     & (EG-F^2)(s,t) \\
     &=  \frac{s^2t-(\varepsilon_1+\varepsilon_2+\varepsilon_3)st + (\varepsilon_1\varepsilon_2+\varepsilon_1\varepsilon_3+\varepsilon_2\varepsilon_3)t-\varepsilon_1\varepsilon_2\varepsilon_3
     }{4t(s-\varepsilon_1)(s-\varepsilon_2)(s-\varepsilon_3)}   \\
     &\quad \times 
     \frac{\varepsilon_1\varepsilon_2\varepsilon_3(s-t)}{4t^2(t-\varepsilon_1)(t-\varepsilon_2)(t-\varepsilon_3)} \\
     &=  \frac{\varepsilon_1\varepsilon_2\varepsilon_3(s-t)(s^2t-(\varepsilon_1+\varepsilon_2+\varepsilon_3)st + (\varepsilon_1\varepsilon_2+\varepsilon_1\varepsilon_3+\varepsilon_2\varepsilon_3)t-\varepsilon_1\varepsilon_2\varepsilon_3)
     }{16t^3(s-\varepsilon_1)(s-\varepsilon_2)(s-\varepsilon_3)(t-\varepsilon_1)(t-\varepsilon_2)(t-\varepsilon_3)}  
    \end{align*}
    The determinant of the second fundamental form is
    \begin{align*}
      &(LN-M^2)(s,t) \\
      &= \frac{m(s,t)P_L(s,t)}{4t(s-\varepsilon_1)(s-\varepsilon_2)(s-\varepsilon_3)}
      \cdot\frac{m(s,t)P_N(s,t)}{ 4t^2(t-\varepsilon_1)(t-\varepsilon_2)(t-\varepsilon_3)} - \frac{m(s,t)^2}{16t^2}     \\
     &= \frac{m(s,t)^2P_L(s,t)P_N(s,t)}{16t^3(s-\varepsilon_1)(s-\varepsilon_2)(s-\varepsilon_3)(t-\varepsilon_1)(t-\varepsilon_2)(t-\varepsilon_3)} - \frac{m(s,t)^2}{16t^2}     \\
     &= \frac{m(s,t)^2\left[P_L(s,t)P_N(s,t) - t(s-\varepsilon_1)(s-\varepsilon_2)(s-\varepsilon_3)(t-\varepsilon_1)(t-\varepsilon_2)(t-\varepsilon_3)  \right] 
     }{16t^3(s-\varepsilon_1)(s-\varepsilon_2)(s-\varepsilon_3)(t-\varepsilon_1)(t-\varepsilon_2)(t-\varepsilon_3)}   \\
     &= \frac{\varepsilon_1\varepsilon_2\varepsilon_3  (st-(\varepsilon_1+\varepsilon_2)t+\varepsilon_1\varepsilon_2)  }{16t^3(s-\varepsilon_1)(s-\varepsilon_2)(s-\varepsilon_3)(t-\varepsilon_1)(t-\varepsilon_2)(t-\varepsilon_3)
     } \\
     &\quad \times \frac{(st-(\varepsilon_1+\varepsilon_3)t+\varepsilon_1\varepsilon_3)
     (st-(\varepsilon_2+\varepsilon_3)t+\varepsilon_2\varepsilon_3)}{(s^2t-(\varepsilon_1+\varepsilon_2+\varepsilon_3)ts+(\varepsilon_1\varepsilon_2+\varepsilon_1\varepsilon_3+\varepsilon_2\varepsilon_3)t-\varepsilon_1\varepsilon_2\varepsilon_3)}.
    \end{align*}    
    So the Gaussian curvature at the point $\Phi(s,t)$ is
    \begin{align*}
      &\quad K(s,t) \\
      &= \frac{(LN-M^2)(s,t)}{(EG-F^2)(s,t)} \\
      &= \frac{ (st-(\varepsilon_1+\varepsilon_2)t+\varepsilon_1\varepsilon_2)  (st-(\varepsilon_1+\varepsilon_3)t+\varepsilon_1\varepsilon_3) (st-(\varepsilon_2+\varepsilon_3)t+\varepsilon_2\varepsilon_3)}{
      (s-t)(s^2t-(\varepsilon_1+\varepsilon_2+\varepsilon_3)st + (\varepsilon_1\varepsilon_2+\varepsilon_1\varepsilon_3+\varepsilon_2\varepsilon_3)t-\varepsilon_1\varepsilon_2\varepsilon_3)^2} .
    \end{align*}
  \end{proof}

  Following Liess, we define $\alpha(s,t)$ to be the squared distance of the origin to the tangent plane
  through $\Phi(s,t)\in S^* \sm\Sigma$. Then
  \begin{align*}
    \alpha(s,t)
    &:= \left( \frac{\nabla\cN(\eta)\cdot\eta}{|\nabla\cN(\eta)|}\right)^2\Big|_{\eta=\Phi(s,t)} \\
    &= \frac{(\nabla\cN(\Phi(s,t))\cdot\Phi(s,t))^2}{|\nabla\cN(\Phi(s,t))|^2}  \\
    &= \frac{(\sum_{i=1}^3 t_i(\Phi(s,t))\Phi_i(s,t))^2}{\sum_{i=1}^3 t_i(\Phi(s,t))^2\Phi_i(s,t)^2}  \\
    &= 
    \frac{(t-s)\varepsilon_1\varepsilon_2\varepsilon_3}{s^2t-(\varepsilon_1+\varepsilon_2+\varepsilon_3)st  +
    (\varepsilon_1\varepsilon_2+\varepsilon_1\varepsilon_3+\varepsilon_2\varepsilon_3)t-\varepsilon_1\varepsilon_2\varepsilon_3}.
  \end{align*} 
  From this we deduce
  \begin{align*}
    K(s,t) = \frac{(\alpha(s,t)-\varepsilon_1)(\alpha(s,t)-\varepsilon_2)(\alpha(s,t)-\varepsilon_3)}{
    \alpha(s,t)(s-\varepsilon_1)(s-\varepsilon_2)(s-\varepsilon_3)}.
  \end{align*}
  
  \begin{proposition} \label{prop:meancurvature}
      The mean curvature at $\Phi(s,t)\in S^* \sm\Sigma$ is given by $(\alpha=\alpha(s,t))$
    \begin{align*}
     K_m (s,t)
      &= - \frac{1}{2} \big( \frac{s}{\sqrt{\alpha}} K(s,t) \\
      &\quad - \frac{1}{\sqrt{\alpha}} \left( \frac{(\alpha -
      \varepsilon_1)(\alpha-\varepsilon_2)}{(s-\varepsilon_1)(s-\varepsilon_2)} +
      \frac{(\alpha-\varepsilon_2)(\alpha-\varepsilon_3)}{(s-\varepsilon_2)(s-\varepsilon_3)} + \frac{(\alpha
      - \varepsilon_1)(\alpha - \varepsilon_3)}{(s-\varepsilon_1)(s-\varepsilon_3)} \right) \big).
    \end{align*}
  \end{proposition}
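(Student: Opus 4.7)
The plan is to derive the formula for the mean curvature from the classical identity
\begin{equation*}
K_m = \frac{EN - 2FM + GL}{2(EG - F^2)}
\end{equation*}
combined with the explicit expressions for $E,F,G,L,M,N$ given in Propositions~\ref{prop:FirstFundamentalForm} and~\ref{prop:SecondFundamentalForm}. Since Proposition~\ref{prop:FirstFundamentalForm} yields $F(s,t)=0$, the formula reduces to the pleasant expression
\begin{equation*}
K_m(s,t) = \frac{1}{2}\!\left(\frac{L(s,t)}{E(s,t)} + \frac{N(s,t)}{G(s,t)}\right),
\end{equation*}
so the problem reduces to computing the two ratios $L/E$ and $N/G$.

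Next I would substitute the formulae from Propositions~\ref{prop:FirstFundamentalForm} and~\ref{prop:SecondFundamentalForm}. Several factors, including $4t$ and $(s-\varepsilon_1)(s-\varepsilon_2)(s-\varepsilon_3)$ in $L/E$, and $4t^2(t-\varepsilon_1)(t-\varepsilon_2)(t-\varepsilon_3)$ in $N/G$, cancel cleanly, leaving
\begin{equation*}
\frac{L(s,t)}{E(s,t)} = \frac{m(s,t)\,P_L(s,t)}{A(s,t)}, \qquad \frac{N(s,t)}{G(s,t)} = \frac{m(s,t)\,P_N(s,t)}{\varepsilon_1\varepsilon_2\varepsilon_3(s-t)},
\end{equation*}
where $A(s,t) := s^2 t - (\varepsilon_1+\varepsilon_2+\varepsilon_3) s t + (\varepsilon_1\varepsilon_2+\varepsilon_1\varepsilon_3+\varepsilon_2\varepsilon_3)t - \varepsilon_1\varepsilon_2\varepsilon_3$ is the numerator appearing in $E$.

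The key bookkeeping step is then to translate the formal variables $s,t$ into the geometric quantity $\alpha(s,t)$. From the explicit computation of $\alpha$ already carried out in the text one has the identities
\begin{equation*}
\alpha(s,t) = \frac{(t-s)\,\varepsilon_1\varepsilon_2\varepsilon_3}{A(s,t)}, \qquad m(s,t)^2 = \frac{\varepsilon_1\varepsilon_2\varepsilon_3}{(t-s) A(s,t)}, \qquad \sqrt{\alpha(s,t)} = (t-s)\,m(s,t),
\end{equation*}
so multiplicative factors of $m$ and $A$ can be replaced by functions of $\alpha$ and $(t-s)$. The point of the argument is then to show the two polynomial identities (in $s,t$, with $\alpha=\alpha(s,t)$)
\begin{align*}
\frac{P_L(s,t)}{A(s,t)} &= \frac{1}{\sqrt{\alpha}}\Bigl( \tfrac{(\alpha-\varepsilon_1)(\alpha-\varepsilon_2)}{(s-\varepsilon_1)(s-\varepsilon_2)}+\tfrac{(\alpha-\varepsilon_2)(\alpha-\varepsilon_3)}{(s-\varepsilon_2)(s-\varepsilon_3)}+\tfrac{(\alpha-\varepsilon_1)(\alpha-\varepsilon_3)}{(s-\varepsilon_1)(s-\varepsilon_3)}\Bigr) - \tfrac{s}{\sqrt{\alpha}}K \cdot (\text{suitable factor}), \\
\frac{P_N(s,t)}{\varepsilon_1\varepsilon_2\varepsilon_3(s-t)} &= (\text{a similar expression})
\end{align*}
so that after adding $L/E$ and $N/G$ one recovers the claim, where I rewrote $K(s,t)$ through its established formula
\begin{equation*}
K(s,t) = \frac{(\alpha-\varepsilon_1)(\alpha-\varepsilon_2)(\alpha-\varepsilon_3)}{\alpha(s-\varepsilon_1)(s-\varepsilon_2)(s-\varepsilon_3)}.
\end{equation*}

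The only real obstacle is the algebraic verification of these polynomial identities, in particular locating the factors $(\alpha-\varepsilon_i)$ inside $P_L$ and $P_N$. A clean way of doing this is to use symmetric-function notation $\sigma_1=\varepsilon_1+\varepsilon_2+\varepsilon_3$, $\sigma_2=\sum \varepsilon_i\varepsilon_j$, $\sigma_3=\varepsilon_1\varepsilon_2\varepsilon_3$, so that $A = s^2 t - \sigma_1 s t + \sigma_2 t - \sigma_3$ and $\alpha \, A = (t-s)\sigma_3$, and to expand both sides as polynomials in $s,t$ after clearing the denominator $\alpha(s-\varepsilon_1)(s-\varepsilon_2)(s-\varepsilon_3)$. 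Given the length of such a calculation, I would (as the authors do elsewhere in the paper) defer the routine but tedious identity check to the accompanying MAPLE\textsuperscript{TM} worksheet and present here only the reduction to that identity via $F=0$ together with the $\alpha$-substitutions above.
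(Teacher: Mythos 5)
Your proposal is correct and takes the same approach as the paper: the paper's proof simply invokes the formula $K_m = \dfrac{GL - 2FM + EN}{2(EG-F^2)}$ together with the coefficients from Propositions~\ref{prop:FirstFundamentalForm} and~\ref{prop:SecondFundamentalForm}, leaving the algebraic simplification unstated, exactly as you do (your $F=0$ reduction to $\tfrac12(L/E+N/G)$ and the cancellations against the shared $4t$, $4t^2$ and $(s-\eps_i)$, $(t-\eps_i)$ factors are the correct intermediate steps). One small imprecision: once $m(s,t)$ and $\sqrt{\alpha}$ enter, the identities you must verify are rational identities in $s,t$ after substituting $\sqrt{\alpha}=(t-s)m(s,t)$, not polynomial identities as stated; this is cosmetic and does not affect the argument.
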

  \begin{proof}
    This is a consequence of the formula 
\begin{equation*}
\mathcal K_m(s,t) = \frac{G(s, t)L(s, t)-2F(s, t)M(s, t)+E(s, t)N(s, t))}{2(E(s, t)G(s, t)-F(s, t)^2)},
\end{equation*}    
    and the coefficients of first and second fundamental form computed in Propositions~\ref{prop:FirstFundamentalForm}-\ref{prop:SecondFundamentalForm}.
  \end{proof}
  We remark that our result deviates by the factor $- \frac{1}{2}$ from Liess' formula \cite[(A.2),~p.~91]{Liess1991}. This does not change the curvature properties, which we describe in the following:
   
 The Gaussian curvature $K$ vanishes precisely in those points where
   $\alpha(s,t)$ attains one of the values $\eps_1,\eps_2,\eps_3$. We assume $\eps_1<\eps_2<\eps_3$ for
   simplicity. Then one has $\eps_1<\alpha(s,t)<\eps_3$ so that the Gaussian curvature vanishes
   precisely at those points where $\alpha(s,t)=\eps_2$. Those are given by $t=T(s)$ where
   \begin{equation}
   \label{eq:ParametrizationHamiltonianCircles}
     T(s) = \frac{\varepsilon_1 \varepsilon_3 (\varepsilon_2 -s) }{s^2 - (\varepsilon_1 + \varepsilon_2 +
   \varepsilon_3) s + (\varepsilon_1 \varepsilon_2 + \varepsilon_1 \varepsilon_3 + \varepsilon_2
   \varepsilon_3) - \varepsilon_1 \varepsilon_3} = \frac{\varepsilon_1 \varepsilon_3}{\varepsilon_1 + \varepsilon_3 -s}.
   \end{equation}
   This is the parametrization of a one-dimensional submanifold that is called a Hamiltonian circle.
   Notice that each of the four singular point has its own Hamiltonian circle.
   (They are distinguished by $\sigma_i,\sigma_{i+2}\in\{-1,+1\}$ in Proposition~\ref{prop:singularpoints}).  
   By Proposition~\ref{prop:meancurvature} the mean curvature is non-zero along the Hamiltonian circles. We
   thus conclude that in the smooth regular part of Fresnel's wave surface, there is at least one
   principal curvature bounded away from zero. The Gaussian curvature is positive on the inner sheet and on
   the parts on the outer sheet that lie outside the Hamiltonian circles, while it is negative inside the
   Hamiltonian circles, i.e., close to the singular points on the outer sheet. In Proposition \ref{prop:ParametrizedCone} we show that the Hessian matrix at a singular point $D^2 p(\omega,\zeta)$ is indefinite.
   
    To summarize the geometric properties, we can perceive $S$ as union of two sheets $A$ and $B$, linked
together at the singular points, when $A$ is completely encased by $B$. $A$ is convex, but $B$ is not. Close
to the singular points, $B$ is not convex, and the Gaussian curvature is negative. Increasing geodesic
distance from the singular points on $B$, we reach the Hamiltonian circles: the curvature vanishes. Beyond
the Hamiltonian circles, $B$ is locally convex, too, and has again positive Gaussian curvature.
   
       \begin{figure}
      \includegraphics[scale=0.35]{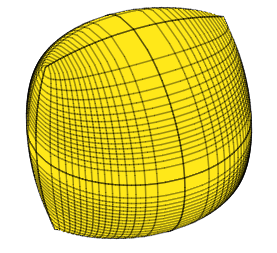}
      \includegraphics[scale=0.3]{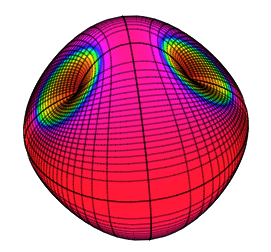}
      \includegraphics[scale=0.3]{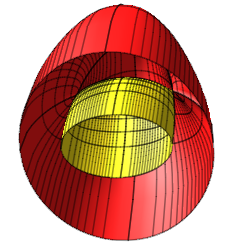}
      \caption{Fresnel's wave surface: inner sheet (top left) and outer
sheet (top right) for $\varepsilon_1 = 1$, $\varepsilon_2 = 5$, $\varepsilon_3 = 15$.
      The colours on the outer sheet highlight regions of identical
Gaussian curvature. The blue Hamiltonian
      circles encase the singular points.
       The contact of inner (yellow) and half of the outer sheet (red)
at two singular points is depicted
      in the figure below.}
      \label{fig:Fresnel}
    \end{figure}   
   
   \begin{corollary} \label{cor:Fresnel}
     The wave surface $S=\{\xi\in\R^3: p(\omega,\xi)=0\}$ admits a decomposition $S=S_1\cup S_2\cup S_3$, where  
     \begin{itemize}
       \item[(i)] $S_1$ is a compact smooth regular manifold with two non-vanishing principal
       curvatures in the interior,
       \item[(ii)] $S_2$ is a compact smooth regular manifold with one non-vanishing principal
       curvature in the interior,
       \item[(iii)] $S_3$ is the union of (small) neighbourhoods of the singular points described in
       Proposition~\ref{prop:singularpoints}.
     \end{itemize}
\end{corollary}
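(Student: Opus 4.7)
The plan is to assemble the decomposition directly from the preceding curvature analysis. First I would fix an ordering of the permittivities, say $\varepsilon_1<\varepsilon_2<\varepsilon_3$, so that by Proposition~\ref{prop:singularpoints} the singular set $\Sigma=\{\zeta_1,\zeta_2,\zeta_3,\zeta_4\}$ consists of precisely four points lying in the plane $\{\eta_2=0\}$. Since $S^*$ is a compact real algebraic variety (the level set of the polynomial $\cN$) and $\Sigma$ is a finite collection of isolated critical points of $\cN$, I can choose pairwise disjoint open balls $U_k\subset\R^3$ centered at $\zeta_k$ with radius so small that $S^*\cap U_k$ deformation retracts onto $\{\zeta_k\}$. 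Define
\[
  S_3 := \bigcup_{k=1}^4 \big(S^*\cap U_k\big).
\]
Then $S^*\sm S_3$ is a compact subset of $S^*\sm \Sigma$, hence a compact smooth regular $2$-manifold.

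Next I would identify where $S^*\sm S_3$ fails to have two non-vanishing principal curvatures. The formula
\[
   K(s,t) = \frac{(\alpha-\varepsilon_1)(\alpha-\varepsilon_2)(\alpha-\varepsilon_3)}{\alpha(s-\varepsilon_1)(s-\varepsilon_2)(s-\varepsilon_3)}
\]
derived above, together with the bound $\varepsilon_1<\alpha(s,t)<\varepsilon_3$ (inherited from $\varepsilon_1<s<\varepsilon_3$), shows that $K$ vanishes on the smooth part exactly on the set $H:=\{\alpha=\varepsilon_2\}$, which by~\eqref{eq:ParametrizationHamiltonianCircles} is the union of the four Hamiltonian circles, one encircling each singular point. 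These are compact one-dimensional embedded submanifolds of $S^*$, and by Proposition~\ref{prop:meancurvature} the mean curvature $K_m$ does not vanish on $H$ (the expression reduces to a nonzero combination of quotients when $\alpha=\varepsilon_2$, using full anisotropy). By continuity and compactness, $|K_m|\geq c>0$ on a tubular open neighborhood $V$ of $H\sm S_3$ in $S^*\sm S_3$.

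I would then set $S_2 := V$ and $S_1 := (S^*\sm S_3)\sm V$; shrinking $V$ slightly if necessary, one can arrange that $|K|\geq c'>0$ on $S_1$ by another compactness argument, using that $K$ is continuous and nowhere zero on the compact set $S_1$. On $S_1$ both principal curvatures are therefore bounded away from zero, while on $S_2$ the bound $|K_m|\geq c$ forces at least one principal curvature to be bounded away from zero (since $2K_m=\kappa_1+\kappa_2$). Finally, to extend these conclusions to the principal sections $S^*\cap\{\eta_i=0\}$ not covered by the chart $\Phi$, I would invoke the fact that $S^*\sm\Sigma$ is globally a smooth manifold (as $\nabla\cN\neq0$ there), so the formulas for $K$ and $K_m$—being intrinsic—extend continuously by the limiting values obtained as $s$ or $t$ approaches the endpoints $\varepsilon_j$.

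The main technical obstacle is the verification that $K_m$ does not vanish on the Hamiltonian circles and that the signs/ranges of $s,t$ can be controlled uniformly up to the principal sections. Provided that is handled, the rest is essentially bookkeeping with compactness and continuity. I would relegate to the MAPLE sheet, or to a short direct computation plugging $\alpha=\varepsilon_2$ and $t=T(s)$ into the expression of Proposition~\ref{prop:meancurvature}, the explicit check that $K_m\big|_H\neq 0$ for every admissible $s$.
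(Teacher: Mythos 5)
Your proposal is correct and follows essentially the same path as the paper: identify the four singular points via Proposition~\ref{prop:singularpoints}, locate the zeros of the Gaussian curvature on the Hamiltonian circles using the formula $K(s,t)=\frac{(\alpha-\eps_1)(\alpha-\eps_2)(\alpha-\eps_3)}{\alpha(s-\eps_1)(s-\eps_2)(s-\eps_3)}$, and invoke Proposition~\ref{prop:meancurvature} to see that $K_m$ does not vanish there (indeed at $\alpha=\eps_2$ it reduces to $\frac{(\eps_2-\eps_1)(\eps_2-\eps_3)}{2\sqrt{\eps_2}\,(s-\eps_1)(s-\eps_3)}>0$), then package the pieces by compactness. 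The paper presents this as a running summary rather than a displayed proof, but the ingredients and their assembly are the same.
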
 

For later sections, it will be important to have these curvature properties likewise for level sets
$\{p(\omega,\xi) = t\}_{t \in [-t_0,t_0]}$ for some $0< t_0 \ll 1$ with uniform bounds in $t$.\\
For this purpose, recall that for an implicitly defined surface $\{ \xi\in\R^3: F(\xi) = 0 \}$ the Gaussian
curvature is given by (cf. \cite[Corollary~4.2,~p.~643]{Goldman2005})
\begin{equation*}
K = - \begin{vmatrix} D^2 F & \nabla F \\ \nabla F^t & 0 \end{vmatrix} | \nabla F |^{-4}
\end{equation*}
and hence is continuous on the level sets as long as $F$ is smooth and $|\nabla F| \geq d > 0$.
This shows that $|K| \geq c/2 > 0$ on all level sets sufficiently close to $\{\xi\in\R^3: F(\xi) = 0 \}$,
where $|K| \geq c > 0$.
Furthermore, we have the following for the mean curvature of an implicitly defined surface (cf. \cite[Corollary~4.5,~p.~645]{Goldman2005}):
\begin{equation*}
  K_m = - \nabla \cdot \left( \frac{\nabla F}{|\nabla F|} \right).
\end{equation*}
Hence, again due to smoothness of $p$ and $|\nabla F| \geq d > 0$, along the curves on the level sets, where
the Gaussian curvature vanishes, we have one principal curvature bounded from below. Choosing the level sets
close to the original surface, we find one principal curvature bounded from below likewise on
all the layers.

\section{Generalized Bochner-Riesz estimates with negative index}
\label{section:BochnerRieszEstimates}

The purpose of this section is to show Theorem \ref{thm:GeneralizedBochnerRieszEstimates}. In the following
let $d \geq 2$ and $S = \{ (\xi',\psi(\xi')) : \, \xi' \in [-1,1]^{d-1} \}$ be a smooth surface with
$k \in \{1,\ldots,d-1 \}$ principal curvatures bounded from below. Let
\begin{equation*}
(T^\alpha f) \widehat (\xi) = \frac{(\xi_d - \psi(\xi'))_+^{-\alpha}}{\Gamma(1-\alpha)}  
\chi(\xi')\hat{f}(\xi), \; \chi \in C^\infty_c([-1,1]^{d-1}), \; 0 < \alpha < \frac{k+2}{2}.
\end{equation*}
We show strong estimates for a range of $p$ and $q$
\begin{equation*}
\| T^\alpha f \|_{L^q(\R^d)} \lesssim \| f \|_{L^p(\R^d)}
\end{equation*}
with weak endpoint estimates as stated in Theorem \ref{thm:GeneralizedBochnerRieszEstimates}.
We start with recapitulating Bochner-Riesz estimates in the elliptic case, which is understood best.
 
\subsection{Bochner-Riesz estimates with negative index for elliptic surfaces }
If $\psi$ is \textit{elliptic}, i.e.,the Hessian $\partial^2 \psi$ has eigenvalues of a fixed sign 
on $[-1,1]^{d-1}$, then $T^\alpha$ is a Bochner-Riesz operator of negative index. As explained above, we
shall show bounds also for possibly degenerate $\psi$, which will be useful in the next sections.
For solutions to time-harmonic Maxwell's equations we are interested in the case $d=3$, $\alpha = 1$,
corresponding to restriction--extension operator:
\begin{equation*}
T^1 f(x) = \int_{S_{\text{loc}}} e^{ix.\xi} \hat{f}(\xi) d \sigma(\xi).
\end{equation*}
We take a more general point of view to show that the considerations in the next section also apply in higher
dimensions and general $\alpha$. To put our results into context, we digress for a moment and recapitulate
results on the classical Bochner--Riesz problem.

\medskip
 
For $\alpha > 0$ recall
\begin{equation*}
\mathcal{P}_\alpha(d-1) = 
\left\{ (x,y) \in [0,1]^2 \, : \, x-y \geq \frac{2\alpha}{d+1}, \; x > \frac{d-1}{2d} + \frac{\alpha}{d}, \; y
< \frac{d+1}{2d} - \frac{\alpha}{d} \right\}.
\end{equation*}
The Bochner--Riesz conjecture (for elliptic surfaces) with negative index states:
\begin{conjecture}
\label{conjecture:BochnerRiesz}
Let $d \geq 2$ and $0 < \alpha < \frac{d+1}{2}$. Then $T^\alpha$ is bounded from $L^p(\R^d)$ to $L^q(\R^d)$ if
and only if $(1/p,1/q) \in \mathcal{P}_\alpha(d-1)$.
\end{conjecture}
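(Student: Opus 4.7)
\emph{Proof proposal.} The conjecture splits into a necessity part and a sufficiency part; the former is elementary while the latter is, for $d \geq 3$, a well-known open problem in harmonic analysis.

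For necessity of $(1/p, 1/q) \in \mathcal{P}_\alpha(d-1)$, the plan is to test the putative $L^p(\R^d) \to L^q(\R^d)$ estimate against three families of extremizers. The scaling condition $1/p - 1/q \geq 2\alpha/(d+1)$ follows from Knapp-type caps: take $f$ with $\hat f$ a bump on a $\delta \times \cdots \times \delta \times \delta^2$ cap of the sphere, so that $T^\alpha f$ concentrates on the dual tube; balancing the $L^p$-norm of $f$ against the $L^q$-norm of $T^\alpha f$ as $\delta \to 0$ forces the scaling inequality. The conditions $1/p > (d-1)/(2d) + \alpha/d$ and (by duality) $1/q < (d+1)/(2d) - \alpha/d$ come from testing against functions whose Fourier support is a thin annular neighborhood $\{1 - |\xi|^2 \sim \delta\}$ of the sphere and letting $\delta \to 0$, where the $\delta^{-\alpha}$ blow-up of the multiplier drives the threshold.

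For sufficiency, I would use the standard dyadic decomposition
\[
\frac{(1-|\xi|^2)_+^{-\alpha}}{\Gamma(1-\alpha)} = \sum_{j \geq 0} m_j(\xi), \qquad \supp(m_j) \subset \{1 - |\xi|^2 \sim 2^{-j}\},
\]
where $m_j$ is essentially $2^{j\alpha}$ times a smooth characteristic function of a shell of thickness $2^{-j}$ around the sphere. For each piece $T_j$ one combines: the trivial $L^2 \to L^2$ bound $\lesssim 2^{j\alpha}$ from Plancherel; the $L^1 \to L^\infty$ bound $\lesssim 2^{j(\alpha - (d+1)/2)}$ from stationary phase on the sphere, accounting for the shell thickness $2^{-j}$; and intermediate bounds furnished by the Stein--Tomas restriction theorem at $L^2 \to L^{2(d+1)/(d-1)}$. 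Interpolating between these and summing geometrically in $j$ produces $L^p \to L^q$ bounds on a subset of $\mathcal{P}_\alpha(d-1)$; in the two-dimensional case (Bak, Guti\'errez) this already recovers the full conjectured pentagon.

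\textbf{The main obstacle.} Extending the sufficiency argument to the entire conjectured region for $d \geq 3$ is essentially as hard as the classical Bochner--Riesz conjecture with positive index, which remains open. Partial progress due to Kwon--Lee \cite{KwonLee2020} attains the strong bound up to the inner endpoints $B_{\alpha,d-1}$, $B'_{\alpha,d-1}$ under suitable assumptions, but closing the full range presumably requires new input from multilinear/polynomial restriction theory or $\ell^2$ decoupling. For the purposes of the present paper this obstruction is sidestepped by Theorem~\ref{thm:GeneralizedBochnerRieszEstimates}, which establishes the analogous bounds (up to the same endpoints) simultaneously for elliptic and non-elliptic surfaces, which is precisely what the curvature behaviour of Fresnel's wave surface demands.
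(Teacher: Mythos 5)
This statement is labeled a \emph{conjecture} in the paper and is not proved there: the paper only cites B\"orjeson~\cite{Boerjeson1986} for the necessity of the conditions and Kwon--Lee~\cite{KwonLee2020} for the currently widest known sufficiency range. Your proposal correctly recognises this. Your sketch of necessity via Knapp caps and thin annular shells is in the spirit of B\"orjeson's argument, and your sufficiency outline --- the dyadic decomposition in the distance $1-|\xi|^2$, the $L^2\to L^2$, $L^1\to L^\infty$, and Stein--Tomas inputs, and geometric summation in $j$ --- mirrors exactly the strategy the paper itself deploys (via Lemma~\ref{lem:DecompositionLemma}, Lemma~\ref{lem:MultiplierBound}, Lemma~\ref{lem:KernelEstimate}, and Lemma~\ref{lem:SummationLemma}) to prove the weaker Theorem~\ref{thm:GeneralizedBochnerRieszEstimates}, which covers only part of $\mathcal{P}_\alpha(d-1)$. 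You are also right that closing the remainder of the conjectured range for $d\geq 3$ is open and is believed to be of comparable difficulty to the classical (positive-index) Bochner--Riesz conjecture; the paper does not claim otherwise. In short, the proposal is a correct account of the state of the problem rather than a proof, which is appropriate here since no proof exists.

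One small imprecision worth flagging: you attribute the off-line condition $1/p - 1/q \geq 2\alpha/(d+1)$ to Knapp caps and the single-exponent conditions to annular examples, but in B\"orjeson's treatment these two families of examples interact and the resulting necessary conditions are not cleanly separated in quite the way you describe; the derivation requires a slightly more careful bookkeeping than a single scaling balance. This does not affect the overall correctness of your assessment.
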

The necessity of these conditions was proved by B\"orjeson \cite{Boerjeson1986}. We refer to
\cite[Section~2.6]{KwonLee2020} for a survey, where the currently widest range is covered. 
In the special case $\alpha>\frac{1}{2}$ contributions  
are due to Bak--McMichael--Oberlin~\cite[Theorem~3]{BakMcMichaelOberlin}
and Guti\'errez \cite[Theorem~1]{Gutierrez2000}, see also~\cite{Sogge1986, Bak1997}. In \cite[Remark~2.3]{ChoKimLeeShim2005} was also pointed out that $T^\alpha: L^1(\R^d) \to L^\infty(\R^d)$ is bounded for $\alpha = \frac{d+1}{2}$. 

In the following we recall arguments from~\cite{ChoKimLeeShim2005,KwonLee2020}, which were needed for the
proofs and will be used in the next section for more general surfaces.
In the first step we decompose the multiplier distribution.
 
 \medskip
 
For $\alpha > 0$, let $D^\alpha \in \mathcal{S}'(\R^d)$ be defined by
\begin{equation*}
\langle D^{\alpha}, g \rangle_{(\mathcal{S}', \mathcal{S})} = 
\int_{\R^d} \frac{(\xi_d -
\psi(\xi^\prime))^{-\alpha}_+}{\Gamma(1-\alpha)} \chi(\xi^\prime) g(\xi) d\xi,
\end{equation*}
which is again extended by analytic continuation to the range $1 \leq \alpha<\frac{d+1}{2}$.
We recall the following lemma to decompose the Fourier multiplier:
\begin{lemma}[{\cite[Lemma~2.1]{ChoKimLeeShim2005}}]
\label{lem:DecompositionLemma}
For $\alpha> 0$, there is a smooth function $\phi_\alpha$ satisfying $\text{supp } (\hat{\phi}_\alpha) \subseteq \{ t \, : \, |t| \sim 1 \}$ such that for all $g\in \mathcal{S}(\R^d)$,
\begin{equation*}
\langle D^{\alpha} , g \rangle_{(\mathcal{S}', \mathcal{S})}
 = \sum_{j \in \Z} 2^{\alpha j} \int_{\R^d} \phi_\alpha(2^j(\xi_d - \psi(\xi^\prime))) \chi(\xi^\prime) g(\xi) d\xi.
\end{equation*}
\end{lemma}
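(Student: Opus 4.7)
The plan is to reduce the statement to the one-variable distributional identity
\begin{equation*}
\frac{x_+^{-\alpha}}{\Gamma(1-\alpha)}=\sum_{j\in\Z}2^{\alpha j}\phi_\alpha(2^j x)\quad\text{in }\mathcal{S}'(\R),
\end{equation*}
for a suitable $\phi_\alpha\in\mathcal{S}(\R)$ with $\supp\widehat{\phi_\alpha}\subseteq\{|\tau|\sim 1\}$. Granted this, for each fixed $\xi'$ one substitutes $y=\xi_d-\psi(\xi')$ and applies the identity in $y$ against the Schwartz function $y\mapsto g(\xi',y+\psi(\xi'))$, then multiplies by $\chi(\xi')$ and integrates over $\xi'\in\R^{d-1}$ via Fubini; the cutoff $\chi$ together with $g\in\mathcal{S}(\R^d)$ ensures absolute convergence, so that switching sum and integral is justified.

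To construct $\phi_\alpha$ I use a Littlewood--Paley decomposition on the Fourier side. Fix $\eta\in C_c^\infty(\R)$ with $\supp\eta\subseteq\{|\tau|\sim 1\}$ and $\sum_{j\in\Z}\eta(2^{-j}\tau)=1$ for $\tau\neq 0$. Set $f:=x_+^{-\alpha}/\Gamma(1-\alpha)$. A classical computation yields that $\widehat f$ coincides, up to a unimodular constant, with the distribution $(\tau-i0)^{\alpha-1}$, which is smooth away from the origin and distributionally homogeneous of degree $\alpha-1$; the factor $\Gamma(1-\alpha)^{-1}$ is precisely what makes $f$, and hence $\widehat f$, holomorphic in $\alpha$ across the nonnegative integers. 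Define
\begin{equation*}
\widehat{\phi_\alpha}(\tau):=\eta(\tau)\widehat f(\tau),
\end{equation*}
which is a compactly supported smooth function on $\{|\tau|\sim 1\}$, so that $\phi_\alpha\in\mathcal{S}(\R)$ with the required Fourier support.

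To verify the one-variable identity, use the homogeneity $\widehat f(\tau)=2^{j(\alpha-1)}\widehat f(2^{-j}\tau)$ (valid away from $\tau=0$) to rewrite
\begin{equation*}
\eta(2^{-j}\tau)\widehat f(\tau)=2^{j(\alpha-1)}\widehat{\phi_\alpha}(2^{-j}\tau).
\end{equation*}
Inverting the Fourier transform and using $\mathcal{F}^{-1}[\widehat{\phi_\alpha}(2^{-j}\cdot)](x)=2^j\phi_\alpha(2^j x)$, the $j$-th dyadic piece of $f$ equals $2^{\alpha j}\phi_\alpha(2^j x)$. Summing over $j\in\Z$ and invoking $\sum_j\eta(2^{-j}\tau)=1$ for $\tau\neq 0$ reconstructs $\widehat f$ — and with it $f$ — since $\widehat f$ carries no Dirac mass at the origin for any $\alpha$ in the range considered.

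The main obstacles are twofold. First, integer values $\alpha=1,2,\ldots$ must be interpreted by analytic continuation, since $f$ itself is defined that way; both sides of the identity depend holomorphically on $\alpha$ in the relevant strip once $\eta$ is chosen independent of $\alpha$, so that the identity established for non-integer $\alpha$ extends automatically. Second, convergence of the series when tested against a Schwartz function must be checked: because $\widehat{\phi_\alpha}$ vanishes in a neighbourhood of the origin, $\phi_\alpha$ has vanishing moments of every order, which yields rapid decay of the high-frequency tail $j\to+\infty$, while the low-frequency tail $j\to-\infty$ is tamed by the prefactor $2^{\alpha j}\to 0$ together with the Schwartz decay of $\phi_\alpha$.
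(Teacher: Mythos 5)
Your argument is correct. Note that the paper does not prove this lemma itself but quotes it from \cite{ChoKimLeeShim2005}; your construction—a dyadic Littlewood--Paley decomposition on the Fourier side of the one-variable distribution $x_+^{-\alpha}/\Gamma(1-\alpha)$, exploiting the homogeneity of its Fourier transform (a constant multiple of $(\tau-i0)^{\alpha-1}$), followed by Fubini and analytic continuation in $\alpha$ to obtain the $d$-dimensional identity—is precisely the standard proof, and the two convergence issues you flag (the high-frequency tail via the vanishing moments of $\phi_\alpha$, the low-frequency tail via the prefactor $2^{\alpha j}$) are handled correctly.
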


The importance for our analysis comes from $T^\alpha f(x)=\langle D^{\alpha} , g_x \rangle_{(\mathcal{S}',
\mathcal{S})}$ where $g_x(\xi)=e^{ix.\xi}\hat f(\xi)$. We are thus reduced to study the  operators 
\begin{equation*}
\widehat{T_\delta f}(\xi) = \phi_\alpha\left( \frac{\xi_d - \psi(\xi^\prime)}{\delta} \right) \chi(\xi^\prime) 
\hat{f}(\xi)
\end{equation*}
where $\phi_\alpha \in \mathcal{S}$ satisfies $\text{supp} \hat{\phi_\alpha} \subseteq \{ t : \, |t| \sim 1
\}$ and $\delta=2^{-j} > 0$.  
Fourier restriction estimates can be applied to $T_\delta$, and interpolation with a kernel estimate takes
advantage of the decomposition given by Lemma \ref{lem:DecompositionLemma}.
The Tomas--Stein restriction theorem (cf. \cite{Tomas1975,Stein1993}) suffices already for the sharp
estimates for the restriction--extension operator ($\alpha=1$) due to Guti\'errez \cite{Gutierrez2004}. Cho
\emph{et al.} \cite{ChoKimLeeShim2005} made further progress building on Tao's bilinear restriction theorem \cite{Tao2003}.
The most recent result is due to Kwon--Lee \cite{KwonLee2020} additionally using sharp oscillatory integral
estimates by Guth--Hickman--Iliopoulou \cite{GuthHickmanIliopoulou2019}.
 
 
\subsection{Bochner-Riesz estimates with negative index for general non-flat surfaces}

In this section we extend the analysis to compact pieces of smooth regular hypersurfaces $S\subset
\R^d$ with $k$ non-vanishing principal curvatures and $k\in\{1,\ldots,d-1\}$, $d\in\N,d\geq 3$. 
Notice that the case $d=2$, $\alpha > 0$ was entirely settled by Bak \cite{Bak1997} and Guti\'errez~\cite[Theorem~1]{Gutierrez2000}.
Our argument is based on decompositions in Fourier space as in \cite{ChoKimLeeShim2005,KwonLee2020}. By
further localization in Fourier space we may suppose $S= \{(\xi^\prime, \psi(\xi^\prime)) \, : \,
\xi^\prime \in [-1,1]^{d-1} \}$.  
Notice that the case of $k=0$ corresponds to possibly flat surfaces, where no decay of the Fourier
transform can be expected. The case $k=d-1$ means that the Gaussian curvature is non-vanishing. In the
special case that all principal  curvatures have the same sign, the surface is elliptic and so is $\psi$. 

 In the following we show $L^p$-$L^q$ boundedness from Theorem~\ref{thm:GeneralizedBochnerRieszEstimates} of
the operator
\begin{equation*}
  (T^\alpha f) \widehat (\xi) = \frac{(\xi_d - \psi(\xi^\prime))_+^{-\alpha}}{\Gamma(1-\alpha)}  
  \chi(\xi^\prime)\hat{f}(\xi)
\end{equation*}
for $0<\alpha<\frac{k+2}{2}$ and $p,q\in [1,\infty]$ depending on the decay of
the Fourier transform of the surface measure and thus by the number of non-vanishing principal curvatures.
As above the operator $T^\alpha$ for $1\leq \alpha <\frac{k+2}{2}$ is
defined by analytic continuation (cf.~\cite{ChoKimLeeShim2005, KwonLee2020}). We comment on $\alpha = \frac{k+2}{2}$ after the proof of Lemma \ref{lem:KernelEstimate}.
 
\medskip
 
By Lemma \ref{lem:DecompositionLemma}, we decompose the operator $T^\alpha$ as distribution:
\begin{equation}
\label{eq:DecompositionTalpha}
T^\alpha f = \sum_{j \in \Z} 2^{\alpha j} \int e^{ix.\xi} \phi_\alpha(2^j(\xi_d - \psi(\xi')))  \chi(\xi')
\hat{f}(\xi) d\xi.
\end{equation}
where $\phi:=\phi_\alpha \in \mathcal{S}$ satisfies $\text{supp } \hat{\phi} \subseteq \{ t : \, |t| \sim 1
\}$.
In view of~\eqref{eq:DecompositionTalpha} we have
\begin{equation*}
T^\alpha f = \sum_{j \in \Z} 2^{j \alpha} T_{2^{-j}} f
\end{equation*}
so that it suffices to consider the operators
\begin{equation*}
\widehat{T_\delta f}(\xi) = \phi( \frac{\xi_d - \psi(\xi^\prime)}{\delta} )  \chi(\xi^\prime)\hat{f}(\xi).
\end{equation*}
for $\delta > 0$. The contribution away from the surface corresponding to $\delta \gtrsim 1$ or $j \leq 0$ in
the above display can be estimated by Young's inequality, see below for a precise kernel estimate. This gives
summability for $j \leq 0$. We focus on the main contribution from $j \geq 0$.
 
\medskip
 
We start with using an $L^2$-restriction theorem for the surface $S$. To begin with, we recall the classical
result due to Littman \cite{Littman1963}; see also \cite[Section~VIII.5.8]{Stein1993}, which gives the
following decay of the Fourier transform of the surface measure $\mu$:
\begin{equation}
\label{eq:DecayFourierTransform}
|\hat{\mu}(\xi)| \lesssim \langle \xi \rangle^{- \frac{k}{2}}.
\end{equation}
By the $TT^*$-argument (cf. \cite{Tomas1975,GinibreVelo1979,KeelTao1998}) this can be recast into an
$L^2$-$L^q$ estimate as already recorded by Greenleaf \cite{Greenleaf1981}. 
The decay of the Fourier transform is  crucial for the verification of assumption (ii) in the following
special case of the abstract result from \cite[Theorem~1.2]{KeelTao1998}:

\begin{theorem}[Keel--Tao] \label{thm:KeelTao}
Let $(X,dx)$ be a measure space and $H$ a Hilbert space. Suppose that for each $t \in \R$ we have an
operator $U(t):H \to L^2(X)$ which satisfies the following assumptions for $\sigma>0$:
\begin{itemize}
\item[(i)] For all $t$ and $f \in H$ we have the energy estimate:
\begin{equation*}
\| U(t) f \|_{L^2(X)} \lesssim \|f \|_H.
\end{equation*}
\item[(ii)] For all $t \neq s$ and $g \in L^1(X)$ we have the decay estimate
\begin{equation*}
\| U(s) (U(t))^* g \|_{L^\infty(X)} \lesssim (1+|t-s|)^{-\sigma} \| g \|_{L^1(X)}.
\end{equation*}
\end{itemize}
Then, for $q \geq \frac{2(1+ \sigma)}{\sigma}$, the estimate
\begin{equation*}
\| U(t) f \|_{L^q_{t,x}(\R \times X)} \lesssim \| f \|_H
\end{equation*}
holds.
\end{theorem}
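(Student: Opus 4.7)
The plan is to run the classical $TT^*$ argument, upgrade the two pointwise-in-time hypotheses to a weighted dispersive bound on $U(s)U(t)^*$ by Riesz--Thorin interpolation, and then conclude via Hardy--Littlewood--Sobolev in the time variable.

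First I would dualize: setting $T:H\to L^q_{t,x}$, $(Tf)(t)=U(t)f$, the assertion $\|Tf\|_{L^q_{t,x}}\lesssim \|f\|_H$ is equivalent, via $TT^*$, to
$$\left\|\int U(t)U(s)^*F(s,\cdot)\,ds\right\|_{L^q_t L^q_x} \lesssim \|F\|_{L^{q'}_t L^{q'}_x}.$$
Assumption (i) gives $\|U(t)^*g\|_H\lesssim \|g\|_{L^2_x}$, hence $\|U(s)U(t)^*\|_{L^2\to L^2}\lesssim 1$, while (ii) reads $\|U(s)U(t)^*\|_{L^1\to L^\infty}\lesssim (1+|t-s|)^{-\sigma}$. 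Complex interpolation between these two estimates yields, for any $q\geq 2$,
$$\|U(s)U(t)^*g\|_{L^q_x}\lesssim (1+|t-s|)^{-\sigma(1-2/q)}\|g\|_{L^{q'}_x}.$$

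Next, applying Minkowski's inequality in the spatial variable reduces the bilinear estimate to a one-dimensional convolution bound in time: it suffices to prove that convolution with the kernel $K(t):=(1+|t|)^{-\sigma(1-2/q)}$ maps $L^{q'}(\R)\to L^q(\R)$. At the critical exponent $q=\frac{2(1+\sigma)}{\sigma}$ one computes $\sigma(1-2/q)=2/q$, so the kernel decays exactly at the homogeneity of the one-dimensional Riesz potential of order $1-2/q$; since $1<q'<q<\infty$, the Hardy--Littlewood--Sobolev inequality applies and yields the desired bound. For $q>\frac{2(1+\sigma)}{\sigma}$ the kernel decays strictly faster, and the same bound follows either by real interpolation with the trivial $L^\infty_x$ embedding or by Young's inequality, using that $K\in L^{q/2,\infty}(\R)$ with better integrability as $q$ grows.

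The main obstacle would a priori be the endpoint $q=\frac{2(1+\sigma)}{\sigma}$, which sits exactly at the HLS threshold and in the Keel--Tao mixed-norm setting required a delicate atomic decomposition. Here, however, the target space is the diagonal $L^q_t L^q_x$ rather than a genuinely mixed norm $L^q_t L^r_x$ with $r\neq q$, so a single application of HLS in $t$ handles the endpoint and the Keel--Tao refinement is not needed. One subtlety worth pinning down is the verification that $U(t)^*$ is well-defined and that $(U(t)U(s)^*)^*=U(s)U(t)^*$, which is standard once one notes that assumption (i) gives $U(t):H\to L^2_x$ uniformly, so the adjoint $U(t)^*:L^2_x\to H$ is bounded and the dispersive estimate in (ii) transfers to the composition without further hypotheses.
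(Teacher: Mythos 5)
Your proof is correct, but the paper itself does not prove this statement at all: it is stated as a special case of the abstract Strichartz estimate of Keel and Tao \cite[Theorem~1.2]{KeelTao1998}, and the citation is the entire argument. So your contribution is a self-contained elementary proof of the special case actually used. The key simplification you exploit is exactly the one you flag at the end: here the target is the diagonal norm $L^q_t L^q_x$, so the Keel--Tao admissibility condition $\frac{1}{q}+\frac{\sigma}{r}\leq\frac{\sigma}{2}$ with $r=q$ forces $q\geq \frac{2(1+\sigma)}{\sigma}>2$, which keeps the exponents away from the problematic Keel--Tao endpoint $(q,r)=\bigl(2,\frac{2\sigma}{\sigma-1}\bigr)$ where the classical $TT^*$-plus-HLS scheme breaks down. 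Consequently the whole atomic/dyadic machinery of Keel--Tao is unnecessary, and the pre-1998 template — Riesz--Thorin between the $L^2\to L^2$ energy bound and the $L^1\to L^\infty$ dispersive bound to get $\|U(s)U(t)^*\|_{L^{q'}\to L^q}\lesssim (1+|t-s|)^{-\sigma(1-2/q)}$, then Minkowski in $x$ followed by HLS in $t$ — closes the estimate, including at the borderline $q=\frac{2(1+\sigma)}{\sigma}$ where one checks $\sigma(1-2/q)=2/q\in(0,1)$ and $1<q'<q<\infty$, so HLS applies with a strong bound. This buys transparency and independence from an external deep result, at the cost of not generalizing to genuinely mixed-norm Strichartz estimates, which the paper never needs.

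Two small points to tighten in a write-up. First, say explicitly that $(1+|t|)^{-2/q}\leq |t|^{-2/q}$ pointwise on $\R$, so the convolution kernel is dominated by a Riesz kernel and the one-dimensional HLS inequality applies directly. Second, for $q>\frac{2(1+\sigma)}{\sigma}$ the exponent $\lambda=\sigma(1-2/q)$ satisfies $\lambda>2/q$, hence $(1+|t|)^{-\lambda}\in L^{q/2}(\R)$ (not merely weak-$L^{q/2}$), so the ordinary Young inequality $L^{q'}*L^{q/2}\to L^q$ already suffices; the weak-type Young/Lorentz refinement you mention is not needed off the endpoint.
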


The following two lemmas are the key ingredients in the proof of Theorem~\ref{thm:GeneralizedBochnerRieszEstimates}. 
Both rely on \eqref{eq:DecayFourierTransform}, which in turn depends on the lower bounds of the $k$
non-vanishing curvatures and $\| \psi \|_{C^N}$, $\| \chi \|_{C^N}$ for some large enough $N\in\N$. This leads
to the claimed stability in Theorem \ref{thm:GeneralizedBochnerRieszEstimates} of the estimates on $\psi$ and $\chi$.

In the following lemma we apply Theorem \ref{thm:KeelTao} to $T_\delta$ and $\sigma=\frac{k}{2}$:
 
\begin{lemma}
\label{lem:MultiplierBound}
Let $q \geq \frac{2(2+k)}{k}$. Then we have
\begin{equation}
\label{eq:MultiplierBound}
\| T_\delta f \|_{L^q(\R^d)} \lesssim \delta^{\frac{1}{2}} \| f \|_{L^2(\R^d)}.
\end{equation}
\end{lemma}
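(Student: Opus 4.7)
The plan is to reduce \eqref{eq:MultiplierBound} to a Stein--Tomas type extension estimate for $S$ derived from Theorem~\ref{thm:KeelTao}. I define the extension operator
\begin{equation*}
Eg(x) := \int_{\R^{d-1}} e^{i(x'\cdot\xi' + x_d\psi(\xi'))}\chi(\xi')g(\xi')\,d\xi'
\end{equation*}
and apply Theorem~\ref{thm:KeelTao} with $X = \R^{d-1}$, $H = L^2(\R^{d-1})$ and $U(x_d)g(x') := Eg(x',x_d)$. The energy estimate (i) follows at once from Plancherel in $x'$. For the decay estimate (ii), the convolution kernel of $U(t)U(s)^*$ equals
\begin{equation*}
K_{t-s}(z') = \int_{\R^{d-1}} e^{iz'\cdot\xi' + i(t-s)\psi(\xi')}|\chi(\xi')|^2\,d\xi',
\end{equation*}
i.e., the Fourier transform of the surface measure $|\chi|^2\,d\sigma$ on $S$ evaluated at $(z',t-s)\in\R^d$. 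Littman's bound~\eqref{eq:DecayFourierTransform} yields $|K_{t-s}(z')| \lesssim (1+|(z',t-s)|)^{-k/2} \leq (1+|t-s|)^{-k/2}$ uniformly in $z'$, so (ii) holds with $\sigma = k/2$. Theorem~\ref{thm:KeelTao} then gives $\|Eg\|_{L^q(\R^d)} \lesssim \|g\|_{L^2(\R^{d-1})}$ for every $q \geq 2(k+2)/k$.

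Next I would foliate the $\delta$-neighborhood of $S$ by the level sets of $\xi_d - \psi(\xi')$ to represent $T_\delta$ as an average of extension operators. Substituting $\xi_d = \psi(\xi') + \delta\tau$ in the Fourier inversion defining $T_\delta f$ gives, up to an absolute constant,
\begin{equation*}
T_\delta f(x) = \delta\int_\R \phi(\tau)\,e^{i\delta\tau x_d}\, E g_\tau(x)\,d\tau,\qquad g_\tau(\xi') := \hat f(\xi',\psi(\xi')+\delta\tau).
\end{equation*}
Minkowski's inequality in $L^q_x$ combined with the extension bound of the previous step produces
\begin{equation*}
\|T_\delta f\|_{L^q(\R^d)} \lesssim \delta\int_\R |\phi(\tau)|\,\|\chi g_\tau\|_{L^2_{\xi'}}\,d\tau.
\end{equation*}

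To finish, Cauchy--Schwarz with weight $|\phi(\tau)|^{1/2}$ and $\|\phi\|_{L^1}<\infty$ reduces everything to controlling the weighted $L^2$-average $\int_\R |\phi(\tau)|\,\|\chi g_\tau\|_{L^2}^2\,d\tau$. Reversing the substitution turns this into
\begin{equation*}
\delta^{-1}\int_{\R^d}\left|\phi\!\left(\tfrac{\xi_d-\psi(\xi')}{\delta}\right)\right| |\chi(\xi')|^2\,|\hat f(\xi)|^2\,d\xi \;\lesssim\; \delta^{-1}\|f\|_{L^2}^2
\end{equation*}
by boundedness of $\phi$ and $\chi$ and Plancherel. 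Taking the square root and multiplying by the prefactor $\delta$ yields $\|T_\delta f\|_{L^q} \lesssim \delta \cdot \delta^{-1/2}\|f\|_{L^2} = \delta^{1/2}\|f\|_{L^2}$, as claimed. The one nontrivial step is the verification of hypothesis (ii) of Theorem~\ref{thm:KeelTao}: the key point is that Littman's $d$-dimensional decay of $\widehat{d\sigma}$ dominates the one-dimensional decay in the time variable $t-s$ required by Keel--Tao. The remainder is bookkeeping that explains the gain from the naive $\delta^1$ loss from the thickness of the multiplier support to $\delta^{1/2}$, which is exactly the power dictated by the $L^2\to L^q$ extension estimate.
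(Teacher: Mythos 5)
Your proof is correct and takes essentially the same route as the paper: change variables to flatten the surface, apply Theorem~\ref{thm:KeelTao} to the extension operator $U(t)g(x')=\int e^{i(x'.\xi'+t\psi(\xi'))}\chi(\xi')g(\xi')\,d\xi'$ (with hypothesis (ii) coming from Littman's decay estimate~\eqref{eq:DecayFourierTransform}/\eqref{eq:InnerKernelEstimate}), then conclude by Minkowski, Cauchy--Schwarz, and Plancherel. The only cosmetic difference is that you rescale to the variable $\tau=\xi_d/\delta$ before integrating, whereas the paper keeps $\xi_d$; the underlying computation and the source of the $\delta^{1/2}$ gain are identical.
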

\begin{proof}
We perform a linear change of variables to rewrite
\begin{equation}
\label{eq:MultiplierII}
\begin{split}
(2 \pi)^d T_\delta f(x)
&= \int_{\R^d} e^{ix.\xi} \chi(\xi^\prime) \phi\big( \frac{\xi_d - \psi(\xi^\prime)}{\delta} \big)
\hat{f}(\xi) d\xi
\\
&= \int_{\R^d} e^{i(x^\prime.\xi^\prime) + x_d (\xi_d + \psi(\xi^\prime))} \chi(\xi^\prime)
\phi(\frac{\xi_d}{\delta}) \hat{f}(\xi^\prime,\xi_d+ \psi(\xi^\prime)) d\xi^\prime d\xi_d \\
&= \int_{\R} e^{i x_d \xi_d} \phi( \frac{\xi_d}{\delta} ) \int_{\R^{d-1}} e^{i(x^\prime. \xi^\prime + x_d
\psi(\xi^\prime))} \chi(\xi^\prime) \hat{f}(\xi^\prime,\xi_d + \psi(\xi^\prime)) d\xi^\prime d\xi_d.
\end{split}
\end{equation}
For the kernel in the inner integral we find by the assumptions on $\psi$
\begin{equation}
\label{eq:InnerKernelEstimate}
\left| \int_{\R^{d-1}} e^{i(x^\prime.\xi^\prime + x_d \psi(\xi^\prime))} \chi(\xi^\prime) d\xi^\prime \right|
\lesssim (1+|x_d|)^{- \frac{k}{2}}.
\end{equation}
From this and Theorem~\ref{thm:KeelTao}, applied
to $U(t)g(x')=\int_{\R^{d-1}}e^{ix'.\xi'+t\psi(\xi')}\chi(\xi')g(\xi')\,d\xi'$, 
 we infer
\begin{equation*}
\left\| \int_{\R^{d-1}} e^{i(x^\prime.\xi^\prime + x_d \psi(\xi^\prime)} \chi(\xi^\prime)
\hat{f}(\xi^\prime,\xi_d + \psi(\xi^\prime)) d\xi^\prime \right\|_{L^q(\R^d)}
\lesssim \|\hat{f}(\cdot,\xi_d + \psi(\cdot)) \|_{L^2(\R^{d-1})}.
\end{equation*}
By \eqref{eq:MultiplierII} and Minkowski's inequality, we find
\begin{align*}
\| T_\delta f \|_{L^q(\R^d)}
&\lesssim \int_\R |\phi(\frac{\xi_d}{\delta})|
\left\| \int_{\R^{d-1}} e^{i(x^\prime.\xi^\prime + x_d \psi(\xi^\prime))}
\chi(\xi^\prime) \hat{f}(\xi^\prime,\xi_d + \psi(\xi^\prime)) d\xi^\prime \right\|_{L^q(\R^d)} d\xi_d \\
&\lesssim \int_\R |\phi(\frac{\xi_d}{\delta})| \| \hat{f}(\cdot,\xi_d+\psi(\cdot)) \|_{L^2(\R^{d-1})} d\xi_d
\\
&\lesssim \delta^{\frac{1}{2}} \| f \|_{L^2(\R^d)}.
\end{align*}
The ultimate estimate follows from the Cauchy-Schwarz inequality, Plancherel's theorem, and inverting the change of variables.
\end{proof}
 
Further estimates for $T_\delta$ are derived from $T_\delta f = K_\delta \ast f$ where 
\begin{equation*}
K_\delta(x) = \frac{1}{(2\pi)^{d}} \int_{\R^d} e^{ix.\xi} \chi(\xi^\prime) \phi(\frac{\xi_d -
\psi(\xi^\prime)}{\delta})d\xi.
\end{equation*}
Integration by parts leads to the following kernel estimate:
 
\begin{lemma}
\label{lem:KernelEstimate}
The function $K_\delta$ is supported in $\{(x^\prime,x_d)\in\R^d : |x_d| \sim \delta^{-1} \}$ and the
following estimates hold:
\begin{align}
\label{eq:KernelEstimate}
\begin{aligned}
|K_\delta(x)| &\lesssim_N \delta^{N+1} (1+\delta |x|)^{-N} &&, \text{ if } |x^\prime| \geq c |x_d|, \\
|K_\delta(x)| &\lesssim \delta^{\frac{k}{2}+1} &&, \text{ if } |x^\prime| \leq c|x_d|.
\end{aligned}
\end{align}
\end{lemma}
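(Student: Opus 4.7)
My plan is to factor $K_\delta$ by the change of variables $(\eta',\eta_d):=(\xi',(\xi_d-\psi(\xi'))/\delta)$. The Jacobian is $\delta$, and the exponent separates as $x\cdot\xi = x'\cdot\eta' + x_d(\delta\eta_d + \psi(\eta'))$, giving the product representation
\begin{equation*}
K_\delta(x) = \frac{\delta}{(2\pi)^d}\,\hat\phi(-\delta x_d)\,J(x),\qquad J(x) := \int_{\R^{d-1}} e^{i(x'\cdot\eta' + x_d\psi(\eta'))}\chi(\eta')\,d\eta'.
\end{equation*}
Since $\operatorname{supp}(\hat\phi)\subset\{|t|\sim 1\}$, the first factor is supported in $\{|x_d|\sim\delta^{-1}\}$ and bounded uniformly there, which immediately yields the asserted support property of $K_\delta$. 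The remaining task is to estimate $|J(x)|$ in the two regimes.

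In the regime $|x'|\leq c|x_d|$, I would invoke the stationary phase bound \eqref{eq:InnerKernelEstimate} already used in the proof of Lemma~\ref{lem:MultiplierBound}: because the Hessian $x_d D^2\psi(\eta')$ of the phase has rank at least $k$ (the number of non-vanishing principal curvatures of $S$), Littman's theorem gives $|J(x)|\lesssim(1+|x_d|)^{-k/2}$. Combined with $|x_d|\sim\delta^{-1}$ on the support, this produces
\begin{equation*}
|K_\delta(x)|\lesssim \delta\cdot\delta^{k/2}=\delta^{k/2+1},
\end{equation*}
as claimed.

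In the complementary regime $|x'|\geq c|x_d|$, I would fix $c>2\|\nabla\psi\|_{L^\infty(\operatorname{supp}\chi)}$, which forces the phase $\Phi(\eta'):=x'\cdot\eta'+x_d\psi(\eta')$ to satisfy
\begin{equation*}
|\nabla_{\eta'}\Phi(\eta')| = |x'+x_d\nabla\psi(\eta')|\geq |x'|-|x_d|\|\nabla\psi\|_{L^\infty}\geq |x'|/2
\end{equation*}
throughout $\operatorname{supp}(\chi)$. Nonstationary phase, i.e., integrating by parts $N$ times against the first-order operator $L^* = -i\nabla_{\eta'}\cdot(\nabla\Phi/|\nabla\Phi|^2\;\cdot\;)$, then yields $|J(x)|\lesssim_N |x'|^{-N}$. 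On the support, $|x_d|\sim\delta^{-1}$ together with $|x'|\geq c|x_d|$ implies $|x|\sim|x'|\gtrsim\delta^{-1}$, so that $1+\delta|x|\sim\delta|x'|$, and therefore
\begin{equation*}
|K_\delta(x)|\lesssim \delta\,|x'|^{-N}\lesssim \delta^{N+1}(1+\delta|x|)^{-N}.
\end{equation*}

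I do not anticipate a serious obstacle: all key inputs (the factorization by change of variables, stationary phase via the curvature hypothesis on $\psi$, and nonstationary phase via iterated integration by parts) are either standard or already exploited in the proof of Lemma~\ref{lem:MultiplierBound}. The only mildly delicate point is selecting the threshold $c$ large enough compared to $\|\nabla\psi\|_{L^\infty(\operatorname{supp}\chi)}$ so that the two complementary regimes admit stationary, respectively nonstationary, phase estimates uniformly on $\operatorname{supp}(\chi)$.
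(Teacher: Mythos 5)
Your proof is correct and follows essentially the same route as the paper's: factor $K_\delta$ into $\delta\cdot\check\phi(\delta x_d)\cdot J(x)$ (your change of variables $\eta_d=(\xi_d-\psi(\xi'))/\delta$ is equivalent to the paper's translation $\xi_d\mapsto\xi_d+\psi(\xi')$ followed by integrating out $\xi_d$), read off the support property from the compact support of $\check\phi$, apply the Littman/stationary-phase decay \eqref{eq:InnerKernelEstimate} in the regime $|x'|\leq c|x_d|$, and use nonstationary phase in the complementary regime. The only cosmetic difference is that you record the nonstationary-phase decay in terms of $|x'|^{-N}$ rather than $(1+|x|)^{-N}$, which is harmless since $|x|\sim|x'|\gtrsim\delta^{-1}$ there.
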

\begin{proof}
Changing variables $\xi_d \to \xi_d + \psi(\xi^\prime)$ and integrating in $\xi_d$, we have
\begin{equation*}
(2 \pi)^{d-1} K_\delta(x) = \delta \check{\phi}(\delta x_d) \int_{\R^{d-1}} e^{i(x^\prime.\xi^\prime + x_d \psi(\xi^\prime))}
\chi(\xi^\prime) d\xi^\prime.
\end{equation*}
Since $\check{\phi}$ is supported in $\{t : \, |t| \sim 1\}$, $K_\delta$ is supported in $\{(x^\prime,x_d): |x_d| \sim \delta^{-1} \}$. For the phase function $\Phi(\xi') = x'.\xi' + x_d \psi(\xi')$, we find
\begin{equation*}
|\nabla_{\xi^\prime} \Phi | \geq c_1|x|, \text{ if } |x^\prime| \geq c|x_d|.
\end{equation*}
So the method of non-stationary phase gives for $|x^\prime| \gtrsim |x_d|$
$$
|K_\delta(x)|
\lesssim_N \delta \|\check\phi\|_\infty (1+|x|)^{-N}
\lesssim_N \delta^{N+1} (1+\delta|x|)^{-N}.
$$
Notice that we used $\delta |x|\geq \delta |x_d| \gtrsim 1$ holds in this case.
On the other hand, \eqref{eq:InnerKernelEstimate} implies for $|x_d| \gtrsim |x^\prime|$
\begin{equation*}
|K_\delta(x)|
\lesssim \delta \left| \int_{\R^{d-1}} e^{i (x^\prime.\xi^\prime + x_d \psi(\xi^\prime))} \chi(\xi^\prime)
d\xi^\prime \right|
\lesssim \delta (1+ |x_d |)^{-\frac{k}{2}}
\lesssim \delta^{\frac{k+2}{2}}.
\end{equation*}
\end{proof}
We remark that the kernel estimate shows that $T^\alpha: L^1(\R^d) \to L^\infty(\R^d)$ also for $\alpha =
\frac{k+2}{2}$ by the same argument as in \cite[Remark~2.3]{ChoKimLeeShim2005}.
 
With Lemma~\ref{lem:KernelEstimate} at hand, we may now localize $f$ to cubes of size
$\delta^{-1}$ by the following argument, originally due to Fefferman \cite{Fefferman1970}; see also \cite[p.~422--423]{Stein1993}, and \cite{Lee2003,ChoKimLeeShim2005}:
Let $(Q_j)_{j \in \mathbb{Z}^d}$ denote a finitely overlapping covering of $\R^d$ with cubes of sidelength
$2\delta^{-1}$ centered at $j \delta^{-1}$ and aligned parallel to the coordinate axes.
Let $C_d>0$ be such that $|j-k|>C_d$ implies $\text{dist}(Q_j,Q_k) \gtrsim \delta^{-1}  |j-k|$ uniformly
with respect to $j,k,\delta$. Then, we obtain
\begin{equation*}
\begin{split}
\| T_\delta f \|_{L^q(\R^d)} \lesssim \big( \sum_{j \in \mathbb{Z}^d} \| T_\delta f \|_{L^q(Q_j)}^q \big)^{\frac{1}{q}} 
&\lesssim \big( \sum_{j\in\Z^d} \big( \sum_{|k-j| \leq C_d} \| T_\delta f_k \|_{L^q(Q_j)} \big)^q
\big)^{\frac{1}{q}} \\
&\qquad + \big( \sum_{j\in\Z^d} \big( \sum_{|k-j|> C_d} \|T_\delta f_k \|_{L^q(Q_j)} \big)^q
\big)^{\frac{1}{q}}.
\end{split}
\end{equation*}
If $|k-j|> C_d$, we use the first kernel estimate in ~\eqref{eq:KernelEstimate} and obtain
for all $N\in\N$
\begin{align*}
\| T_\delta f_k \|_{L^q(Q_j)}
&\lesssim \left(\int_{Q_j} \left|\int_{\R^d} K_\delta(x-y)f_k(y)\,dy\right|^q\,dx \right)^{\frac{1}{q}} \\
&\lesssim_N \delta^{N+1}(1+\delta\dist(Q_j,Q_j))^{-N} \left(\int_{Q_j} \left(\int_{Q_k} |f_k(y)|
\,dy\right)^q\,dx \right)^{\frac{1}{q}} \\
&\lesssim_N \delta^{N+1}(1+|j-k|)^{-N}\left(\int_{Q_j} \|f_k\|_p^q |Q_k|^{\frac{q}{p'}}\,dx
\right)^{\frac{1}{q}} \\
&\lesssim_N \delta^{N+1} (1+|j-k|)^{-N} \delta^{-\frac{d}{q}-\frac{d}{p'}} \|f_k\|_{L^p(\R^d)} \\
&\lesssim_N \delta^{N+1-\frac{d}{q}-\frac{d}{p'}} (1+|j-k|)^{-N} \|f_k\|_{L^p(\R^d)}.
\end{align*}
Hence, choosing $N\in\N$ large enough, these terms allow for summation by Young's inequality for series:
\begin{equation*}
\begin{split}
&\quad \big( \sum_{j\in\Z^d} \big( \sum_{|k-j| > C_d} \| T_\delta f_k \|_{L^q(Q_j)} \big)^q \big)^{\frac{1}{q}} \\
&\lesssim_N \delta^{N+1-\frac{d}{q}-\frac{d}{p'}} \big( \sum_{j\in\Z^d} \big( \sum_{k\in\Z^d}
(1+|j-k|)^{-N} \| f_k \|_{L^p(\R^d)} \big)^q \big)^{\frac{1}{q}} \\
&\lesssim_N \delta^{N+1-\frac{d}{q}-\frac{d}{p'}} \big( \sum_{k\in\Z^d} \| f_k \|_{L^p(\R^d)}^q
\big)^{\frac{1}{q}}
\\
&\lesssim \delta^{N+1-\frac{d}{q}-\frac{d}{p'}} \big( \sum_{k\in\Z^d} \| f_k \|_{L^p(\R^d)}^p
\big)^{\frac{1}{p}}
\\
&\lesssim \delta^{N+1-\frac{d}{q}-\frac{d}{p'}} \| f \|_{L^p(\R^d)}.
\end{split}
\end{equation*}
The penultimate estimate follows from the embedding $\ell^p \hookrightarrow \ell^q$, $p \leq q$, and the last
line from the finite overlapping property.
For the ``diagonal'' set, $|k-j| \leq C_d$, we use~\eqref{eq:MultiplierBound} as well as H\"older's inequality:
\begin{equation*}
\| T_\delta f_k \|_{L^q(Q_j)} \lesssim \delta^{\frac{1}{2}} \| f_k \|_{L^2(\R^d)} \lesssim \delta^{\frac{d}{p}
- \frac{d-1}{2}} \| f_k \|_{L^p(\R^d)}.
\end{equation*}
Here we have used that the support of $f_k$ has measure $\sim \delta^{-d}$ and $p\geq 2$. We conclude
\begin{equation*}
\begin{split}
\big( \sum_{j\in\Z^d} \big( \sum_{|k-j| \leq C_d} \| T_\delta f_k \|_{L^q(Q_j)} \big)^q \big)^{\frac{1}{q}}
&\lesssim \delta^{\frac{d}{p} - \frac{d-1}{2}} \big( \sum_{j\in\Z^d} \big( \sum_{|k-j| \leq C_d} \| f_k \|_{L^p(\R^d)} \big)^q \big)^{\frac{1}{q}} \\
&\lesssim \delta^{\frac{d}{p} - \frac{d-1}{2}} \big( \sum_{j\in\Z^d} \| f_j\|_{L^p(\R^d)}^q \big)^{\frac{1}{q}} \\
&\lesssim \delta^{\frac{d}{p} - \frac{d-1}{2}} \| f \|_{L^p(\R^d)},
\end{split}
\end{equation*}
like above due to the embedding $\ell^p \hookrightarrow \ell^q$ for $p \leq q$ and the finite overlapping property.
Combining the off-diagonal and the diagonal estimates for large enough~$N$, we get 
\begin{equation}
\label{eq:RestrictionBound}
2^{j \alpha} \| T_{2^{-j}} f \|_{L^q(\R^d)} 
\lesssim 2^{j \big( \frac{d-1}{2} - \frac{d}{p} +\alpha \big)} \| f\|_{L^p(\R^d)}
\end{equation}
for $q \geq \frac{2(2+k)}{k} $ and $2 \leq p \leq q$.
 
\medskip
 
By the kernel estimate~\eqref{eq:KernelEstimate}, we find $|K_\delta(x)|\les \delta^{\frac{k+2}{2}}$
for all $x\in\R^d$ and thus
\begin{equation}
\label{eq:LinftyEstimate}
2^{j \alpha} \| T_{2^{-j}} f \|_{L^\infty(\R^d)} \lesssim 2^{j(\alpha - \frac{k+2}{2})} \| f \|_{L^1(\R^d)}.
\end{equation}
Next we interpolate~\eqref{eq:RestrictionBound} and~\eqref{eq:LinftyEstimate} to prove our bounds.
To this end we distinguish the cases  $\frac{1}{2}<\alpha < \frac{k+2}{2}$ and $0<\alpha\leq \frac{1}{2}$. We obtain weak endpoint estimates using a special case of Bourgain's summation argument (cf.
\cite{Bourgain1985,CarberySeegerWaingerWright1999}). The present version is taken
from~\cite[Lemma~2.5]{ChoKimLeeShim2005}, see also~\cite[Lemma~2.3]{Lee2003} for an elementary proof:
 
\begin{lemma}
\label{lem:SummationLemma} 
Let $\varepsilon_1,\varepsilon_2 > 0$, $1 \leq p_1, \, p_2 <\infty$, $1 \leq q_1,q_2 < \infty$. For
every $j \in \mathbb{Z}$ let $\mathcal T_j$ be a linear operator, which satisfies
\begin{align*}
\| \mathcal T_jf \|_{q_1} &\leq M_1 2^{\varepsilon_1 j}  \| f \|_{p_1}, \\
\| \mathcal T_jf \|_{q_2} &\leq M_2 2^{-\varepsilon_2 j} \| f \|_{p_2}.
\end{align*}
Then, for $\theta,p,q$ defined by $\theta = \frac{\varepsilon_2}{\varepsilon_1
+\varepsilon_2}$, $\frac{1}{q} = \frac{\theta}{q_1} + \frac{1-\theta}{q_2}$ and $\frac{1}{p} =
\frac{\theta}{p_1} + \frac{1-\theta}{p_2}$, the following hold:
\begin{align}
\label{eq:SummationI}
\| \sum_{j\in\Z} \mathcal T_jf \|_{q,\infty} &\leq C M_1^\theta M_2^{1-\theta} \| f \|_{p,1}, \\
\label{eq:SummationII} \| \sum_{j\in\Z} \mathcal T_jf \|_q &\leq C M_1^\theta M_2^{1-\theta} \| f \|_{p,1} 
&&\hspace{-1cm}\text{if } q_1 = q_2 = q, \\
\label{eq:SummationIII} \| \sum_{j\in\Z} \mathcal T_jf \|_{q,\infty} &\leq C M_1^\theta M_2^{1-\theta}  \| f \|_{p}
&&\hspace{-1cm}\text{ if } p_1 = p_2.
\end{align}
\end{lemma}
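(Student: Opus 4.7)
The plan is a classical summation device of Bourgain~\cite{Bourgain1985}. The restricted weak-type estimate~\eqref{eq:SummationI} is the essential statement; the variants~\eqref{eq:SummationII} and~\eqref{eq:SummationIII} follow by real interpolation or a simplified variant of the argument. By the atomic description of the Lorentz space $L^{p,1}$ it suffices to verify~\eqref{eq:SummationI} for $f=\chi_E$ where $E\subset\R^d$ has finite Lebesgue measure, so that $\|f\|_{p_i}=|E|^{1/p_i}$ and $\|f\|_{p,1}\sim |E|^{1/p}$.

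Fix $\lambda>0$ and a parameter $j_0\in\Z$ (to be chosen depending on $\lambda$ and $|E|$). I split
\[
  \sum_{j\in\Z}\mathcal T_j f = \sum_{j<j_0}\mathcal T_j f + \sum_{j\geq j_0}\mathcal T_j f =: S_-+S_+,
\]
and apply the triangle inequality together with the elementary geometric series estimates $\sum_{j<j_0}2^{\varepsilon_1 j}\lesssim 2^{\varepsilon_1 j_0}$, $\sum_{j\geq j_0}2^{-\varepsilon_2 j}\lesssim 2^{-\varepsilon_2 j_0}$ to obtain
\[
  \|S_-\|_{q_1}\lesssim M_1\, 2^{\varepsilon_1 j_0}|E|^{1/p_1},\qquad \|S_+\|_{q_2}\lesssim M_2\, 2^{-\varepsilon_2 j_0}|E|^{1/p_2}.
\]
Chebyshev's inequality then yields
\[
  |\{|S_\pm|>\lambda/2\}|\lesssim \lambda^{-q_i}\bigl(M_i\, 2^{\pm\varepsilon_i j_0}|E|^{1/p_i}\bigr)^{q_i}\quad (i=1,2).
\]

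The core of the argument is to optimise the choice of $j_0\in\Z$ so as to minimise the sum of the two Chebyshev bounds. A direct one-variable optimisation shows that the optimal $j_0$ satisfies $2^{(\varepsilon_1 q_1+\varepsilon_2 q_2)j_0}\sim \bigl(M_2^{q_2}|E|^{q_2/p_2}/(M_1^{q_1}|E|^{q_1/p_1})\bigr)\lambda^{q_1-q_2}$, at which point both Chebyshev terms are of order $\bigl(\lambda^{-1}M_1^\theta M_2^{1-\theta}|E|^{1/p}\bigr)^q$, thanks to the identities $\theta=\varepsilon_2/(\varepsilon_1+\varepsilon_2)$, $1/p=\theta/p_1+(1-\theta)/p_2$, and $1/q=\theta/q_1+(1-\theta)/q_2$ from the statement. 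Replacing the optimising $j_0$ by the nearest integer costs only bounded multiplicative constants. Summing the two contributions proves~\eqref{eq:SummationI}.

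Part~\eqref{eq:SummationII}, where $q_1=q_2=q$, follows by applying~\eqref{eq:SummationI} at two distinct triples of auxiliary parameters to produce $L^{p^\pm,1}\to L^{q,\infty}$ with $p^-<p<p^+$, and then invoking Marcinkiewicz's real interpolation theorem with common target exponent $q$ to upgrade to the strong-type bound $L^{p,1}\to L^q$. Part~\eqref{eq:SummationIII}, where $p_1=p_2=p$, requires no reduction to characteristic functions: the splitting argument of~\eqref{eq:SummationI} applies verbatim with $\|f\|_p$ in place of $|E|^{1/p}$ and gives the $L^p\to L^{q,\infty}$ bound directly. The principal obstacle is the algebraic bookkeeping around the balancing step; one must verify that the intricate exponent relations of the statement combine precisely to yield the correct Lorentz-scale bound, which is a routine but somewhat tedious exercise in linear algebra in the parameters.
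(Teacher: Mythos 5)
The paper does not actually prove Lemma~\ref{lem:SummationLemma}; it cites it from \cite[Lemma~2.5]{ChoKimLeeShim2005} and points to \cite[Lemma~2.3]{Lee2003} for an elementary proof. Your argument reconstructs the classical Bourgain summation device used in those references, and the steps for \eqref{eq:SummationI} and \eqref{eq:SummationIII} are sound: reduction to $f=\chi_E$ via the atomic structure of $L^{p,1}$ (respectively, applying the split directly to general $f$ when $p_1=p_2$), the dyadic decomposition at an optimized threshold $j_0$, the geometric-series summation, the Chebyshev bound for each half, and the verification that the exponent algebra with $\theta=\varepsilon_2/(\varepsilon_1+\varepsilon_2)$ collapses to $\bigl(\lambda^{-1}M_1^\theta M_2^{1-\theta}\|f\|\bigr)^q$ are all correct.

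Your proposed derivation of \eqref{eq:SummationII}, however, has a genuine gap. You suggest producing $L^{p^\pm,1}\to L^{q,\infty}$ bounds with $p^-<p<p^+$ and then invoking Marcinkiewicz interpolation ``with common target exponent $q$'' to upgrade to the strong bound $L^{p,1}\to L^q$. This fails precisely because the two target spaces coincide: real interpolation does not improve the second Lorentz index when the endpoint spaces are equal, since $(A,A)_{\theta,r}=A$ for any $\theta\in(0,1)$ and $r\in[1,\infty]$ (the $K$-functional is $\min(1,t)\|a\|_A$), so $(L^{q,\infty},L^{q,\infty})_{\theta,r}=L^{q,\infty}$. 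The Marcinkiewicz upgrade from weak to strong type requires distinct target exponents, which is exactly what is absent in case \eqref{eq:SummationII}. The correct and in fact simpler route is to bypass Chebyshev entirely: since $q_1=q_2=q$ one can apply the $L^q$ triangle inequality to both halves, $\|\sum_j\mathcal T_j\chi_E\|_q\le\|S_-\|_q+\|S_+\|_q\lesssim M_1 2^{\varepsilon_1 j_0}|E|^{1/p_1}+M_2 2^{-\varepsilon_2 j_0}|E|^{1/p_2}$, optimize $j_0$ exactly as in \eqref{eq:SummationI} to obtain $\lesssim M_1^\theta M_2^{1-\theta}|E|^{1/p}$, and then promote this restricted strong-type bound to the full $L^{p,1}\to L^q$ estimate by linearity of $\sum_j\mathcal T_j$ and the layer-cake representation of $L^{p,1}$.
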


\noindent
\textit{Proof of Theorem~\ref{thm:GeneralizedBochnerRieszEstimates}~(i): $\frac{1}{2} < \alpha <
\frac{k+2}{2}$.}\; Interpolating the estimates at the points $(\frac{1}{2},\frac{1}{q_1})$, $\frac{1}{q_1} \in \big[
0,\frac{k}{2(k+2)} \big]$ from~\eqref{eq:RestrictionBound} and $A:=(1,0)$ from~\eqref{eq:LinftyEstimate} gives
\begin{equation*}
  2^{j\alpha}\| T_{2^{-j}} f \|_{L^q(\R^d)} \lesssim 2^{j(\alpha+\frac{k}{2} - \frac{k+1 }{p})} \| f\|_{L^p(\R^d)}
\end{equation*}
for $\frac{1}{p} \in [\frac{1}{2},1]$ and $\frac{1}{q} \leq \frac{k}{k+2} \big( 1 - \frac{1}{p} \big)$. 
We use this bound for $p_1,p_2,q_1,q_2$ given by
$$
  \alpha+\frac{k}{2} - \frac{k+1 }{p_1}=\eps,\quad \alpha+\frac{k}{2} - \frac{k+1 }{p_2}=-\eps,\quad
  \frac{1}{q_i} = \frac{k}{k+2} \big( 1 - \frac{1}{p_i} \big) \quad (i=1,2).
$$ 
Here, $\eps>0$ is chosen so small that $\frac{1}{p_1},\frac{1}{p_2}\in [\frac{1}{2},1]$ holds, which is  
possible thanks to our assumption $\frac{1}{2}<\alpha<\frac{k+2}{2}$. So~\eqref{eq:SummationI} from
Lemma~\ref{lem:SummationLemma} gives  
\begin{equation}\label{eq:BoundIB}
  \| T^\alpha f \|_{L^{q,\infty}(\R^d)} 
  \lesssim  \|f\|_{L^{p,1}(\R^d)} \;\text{where } (\frac{1}{p},\frac{1}{q})
  = \left( \frac{k+2\alpha}{2(k+1)}, \frac{k(k+2-2\alpha)}{2(k+1)(k+2)} \right) =:B_{\alpha,k}.   
\end{equation} 
Furthermore, since $T_{2^{-j}}$ coincides with its dual, we have under the same conditions on $p,q$ as above:
\begin{equation*}
  2^{j\alpha}\| (T_{2^{-j}})^* g \|_{L^{p'}(\R^d)} \lesssim 2^{j(\alpha+\frac{k}{2} - \frac{k+1 }{p})} \|
  g\|_{L^{q'}(\R^d)}.
\end{equation*}
So~\eqref{eq:SummationIII} gives for $p_1=p_2=1,q_1=q_2=
\frac{2(k+1)}{k+2\alpha}$ the estimate 
$$
  \| (T^\alpha)^* g \|_{L^{ (\frac{2(k+1)}{k+2\alpha})',\infty}(\R^d)} 
  \lesssim  \|g\|_{L^1(\R^d)} 
$$
and hence, by duality,
\begin{equation}\label{eq:BoundIC}
  \| T^\alpha f \|_{L^q(\R^d)} 
  \lesssim  \|f\|_{L^{p,1}(\R^d)} \;\text{where } (\frac{1}{p},\frac{1}{q})
  =  \left( \frac{k+2\alpha}{2(k+1)}, 0 \right)=:C_{\alpha,k}.   
\end{equation} 
Since $T^\alpha$ coincides with its dual, the estimates \eqref{eq:BoundIB},\eqref{eq:BoundIC} imply 
\begin{align} \label{eq:BoundIBCprime}
  \begin{aligned}
  \| T^\alpha f \|_{L^{q,\infty}(\R^d)} 
  &\lesssim  \|f\|_{L^{p,1}(\R^d)} \quad\text{where } (\frac{1}{p},\frac{1}{q})
  =  \left( \frac{ k^2 + 2(2+\alpha)k +4}{2(k+1)(k+2)}, \frac{k+2-2\alpha}{2(k+1)} \right) =:B_{\alpha,k}', \\
  \| T^\alpha f \|_{L^{q,\infty}(\R^d)} 
  &\lesssim  \|f\|_{L^{p}(\R^d)} \quad\text{where } (\frac{1}{p},\frac{1}{q})
  =  \left(1, \frac{k+2-2\alpha}{2(k+1)}\right) =:C_{\alpha,k}'.
  \end{aligned}
\end{align} 
Finally, we have the trivial strong estimate 
\begin{equation}\label{eq:BoundIA}
  \|T^\alpha f\|_{L^q(\R^d)}\les \|f\|_{L^p(\R^d)} \quad\text{for }(\frac{1}{p},\frac{1}{q})=(1,0)=:A.
\end{equation} 
We refer to
Figure~\ref{fig:Rieszdiagram1} for a visualization of the situation. 
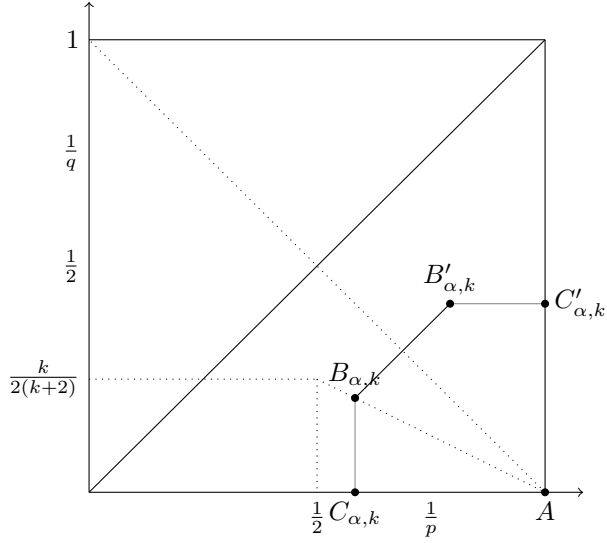
\begin{figure} 
  \centering
\begin{tikzpicture}[scale=0.5] 
\draw[->] (0,0) -- (13,0); \draw[->] (0,0) -- (0,13);
\draw (0,0) --(12,12); \draw(0,12) -- (12,12); \draw (12,12) -- (12,0);
\coordinate (E) at (6,3);
\coordinate [label=left:$\frac{k}{2(k+2)}$] (EX) at (0,3);
\coordinate [label=above:$B_{\alpha,k}$] (B) at (7,2.5);
\coordinate [label=above:$B_{\alpha,k}'$] (B') at (9.5,5);
\coordinate [label=below:$C_{\alpha,k}$] (C) at (7,0);
\coordinate [label=right:$C_{\alpha,k}'$] (C') at (12,5);
\coordinate [label=left:$\frac{1}{2}$] (Y) at (0,6);
\coordinate [label=left:$1$] (YY) at (0,12);
\coordinate [label=below:$\frac{1}{2}$] (X) at (6,0);
\coordinate [label=below:$A$] (XX) at (12,0);
\coordinate [label=left:$\frac{1}{q}$] (YC) at (0,9);
\coordinate [label=below:$\frac{1}{p}$] (XC) at (9,0);
\draw [dotted] (EX) -- (E); \draw [dotted] (E) -- (X); \draw [dotted] (E) -- (XX); \draw [dotted] (XX) -- (YY);
\draw [help lines] (C) -- (B); \draw (B) -- (B');
\draw [help lines] (B') -- (C');
\foreach \point in {(C),(C'),(B),(B'),(XX)}
\fill [black, opacity = 1] \point circle (3pt);
\end{tikzpicture} 
  \caption{Riesz diagram for $T^\alpha$ with $\frac{1}{2}<\alpha<\frac{k+2}{2}$.}
   \label{fig:Rieszdiagram1}
\end{figure}
 From the estimates~\eqref{eq:BoundIB}-\eqref{eq:BoundIA}
we now derive our claim using the real interpolation identity (cf. \cite[Theorem~5.3.1]{BerghLoefstrom1976})
\begin{equation*}
(L^{p_1,q_1}(\R^d),L^{p_2,q_2}(\R^d))_{\theta,q} = L^{p,q}(\R^d) \quad\text{ for } \frac{1}{p} =
\frac{\theta}{p_1} + \frac{1-\theta}{p_2}, \; \theta \in (0,1)
\end{equation*}
as well as the Lorentz space embeddings $L^{\tilde{p}}(\R^d) = L^{\tilde{p},\tilde p}(\R^d) \hookrightarrow
L^{\tilde{p},\tilde{q}}(\R^d)$ for $\tilde{q} \geq \tilde{p}$. In this way, 
we obtain strong estimates for the operator $T^\alpha$ in the interior of the pentagon 
$\text{conv}(A,C_{\alpha,k},B_{\alpha,k},{B'}_{\alpha,k},{C'}_{\alpha,k})$ as well on
$(B_{\alpha,k},{B'}_{\alpha,k})$:
Real interpolation with parameters $(\theta,\tilde{q})$ gives the estimate
\begin{equation*}
\| T^\alpha f \|_{L^{\tilde{q},\tilde{q}}(\R^d)} \lesssim \| f \|_{L^{\tilde{p},\tilde{q}}(\R^d)} \lesssim \|
f \|_{L^{\tilde{p}}(\R^d)}
\end{equation*}
for $(1/\tilde{p},1/\tilde{q}) \in (B_{\alpha,k},B'_{\alpha,k})$.

We have shown strong bounds for $p$, $q$ such that
\begin{equation*}
\frac{1}{p} > \frac{k+2\alpha}{2(k+1)}, \qquad \frac{1}{q} < \frac{k+2-2\alpha}{2(k+1)}, \qquad \frac{1}{p} -
\frac{1}{q} \geq \frac{2\alpha}{k+2}.
\end{equation*}
All these estimates are valid for $\alpha>\frac{1}{2}$. The strong bounds for $\alpha=\frac{1}{2}$ 
can be obtained using Stein's interpolation theorem for analytic families of operators and the estimates for
$\alpha>\frac{1}{2}$ just proved and for $\alpha<\frac{1}{2}$ that we prove below.

\medskip
 
\noindent 
\textit{Proof of Theorem~\ref{thm:GeneralizedBochnerRieszEstimates}~(ii): $0< \alpha <\frac{1}{2}$.}\;

We use the estimates from~\eqref{eq:RestrictionBound} and the same interpolation procedure as above to find
\begin{align*}
  \begin{aligned}
  \| T^\alpha f \|_{L^{q,\infty}(\R^d)} 
  &\lesssim  \|f\|_{L^{p,1}(\R^d)}, \;\text{where } (\frac{1}{p},\frac{1}{q})
  = \left(  \frac{d-1+2\alpha}{2d}, \frac{k}{2(2+k)}   \right) =:B_{\alpha,k}, \\
  \| T^\alpha f \|_{L^q(\R^d)} 
  &\lesssim  \|f\|_{L^{p,1}(\R^d)}, \;\text{where } (\frac{1}{p},\frac{1}{q})
  =  \left( \frac{d-1+2\alpha}{2d}, 0 \right)=:C_{\alpha,k}.
  \end{aligned}   
\end{align*} 
By duality,
\begin{align*}
  \begin{aligned}
  \| T^\alpha f \|_{L^{q,\infty}(\R^d)} 
  &\lesssim  \|f\|_{L^{p,1}(\R^d)}, \;\text{where } (\frac{1}{p},\frac{1}{q})
  = \left(  \frac{4+k}{2(2+k)}, \frac{d+1-2\alpha}{2d} \right) =:B_{\alpha,k}', \\
  \| T^\alpha f \|_{L^{q,\infty}(\R^d)} 
  &\lesssim  \|f\|_{L^p(\R^d)}, \;\text{where } (\frac{1}{p},\frac{1}{q})
  =  \left(1, \frac{d+1-2\alpha}{2d} \right)=:C_{\alpha,k}'.
  \end{aligned}   
\end{align*}

Again we have the trivial strong estimate~\eqref{eq:BoundIA}. Interpolating these estimates as above, we get
strong bounds precisely for $p,q$ such that 
 \begin{equation*}
\frac{1}{p} > \frac{d-1+2\alpha}{2d}, \qquad \frac{1}{q} < \frac{d+1-2 \alpha}{2d}, \qquad \frac{1}{p} -
\frac{1}{q} \geq \frac{2(d-1+2\alpha) + k(2\alpha -1)}{2d(2+k)}.
\end{equation*}
\begin{figure} 
  \centering
\begin{tikzpicture}[scale=0.5]
\draw[->] (0,0) -- (13,0); \draw[->] (0,0) -- (0,13);
\draw (0,0) --(12,12); \draw(0,12) -- (12,12); \draw (12,12) -- (12,0);
\coordinate (E) at (6,3);
\coordinate [label=left:$\frac{k}{2(k+2)}$] (EX) at (0,3);
\coordinate [label=above:$B_{\alpha,k}$] (B) at (5,3);
\coordinate [label=above:$B_{\alpha,k}'$] (B') at (9,7);
\coordinate [label=below:$C_{\alpha,k}$] (C) at (5,0);
\coordinate [label=right:$C_{\alpha,k}'$] (C') at (12,7);
\coordinate [label=left:$\frac{1}{2}$] (Y) at (0,6);
\coordinate [label=left:$1$] (YY) at (0,12);
\coordinate [label=below:$\frac{1}{2}$] (X) at (6,0);
\coordinate [label=below:$A$] (XX) at (12,0);
\coordinate [label=left:$\frac{1}{q}$] (YC) at (0,9);
\coordinate [label=below:$\frac{1}{p}$] (XC) at (9,0);
\draw [dotted] (EX) -- (E); \draw [dotted] (E) -- (X); \draw [dotted] (XX) -- (YY);
\draw [help lines] (C) -- (B); \draw (B) -- (B');
\draw [help lines] (B') -- (C');
\foreach \point in {(C),(C'),(B),(B'),(XX)}
\fill [black, opacity = 1] \point circle (3pt);
\end{tikzpicture}
 \caption{Riesz diagram for $T^\alpha$ with $0<\alpha<\frac{1}{2}$.}
  \label{fig:Rieszdiagram2}
\end{figure}
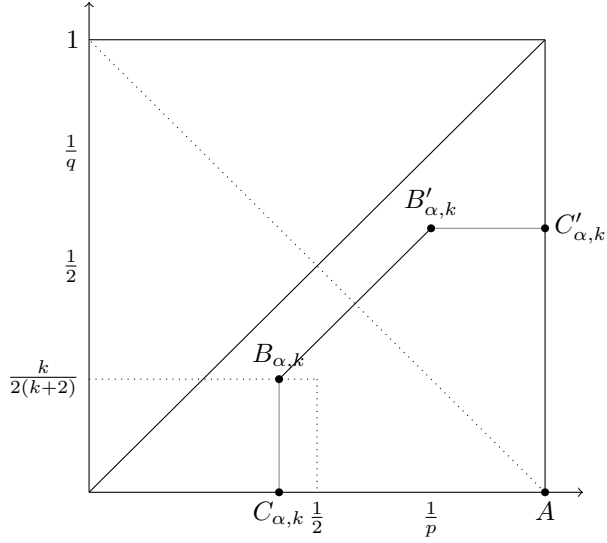

This finishes the proof of Theorem~\ref{thm:GeneralizedBochnerRieszEstimates}. \qed

\subsection{Necessary conditions for generalized Bochner-Riesz estimates with negative index}
In this subsection we discuss necessary conditions for estimates
\begin{equation}
\label{eq:GeneralizedBochnerRieszNecessary}
\| T^\alpha f \|_{L^q(\R^d)} \lesssim \| f \|_{L^p(\R^d)}.
\end{equation}
We shall see that for $\alpha \geq 1/2$ the established strong estimates are sharp, but for $0<\alpha<1/2$
these are in general not. For this purpose, we compare to the estimates for elliptic surfaces in lower
dimensions where the bounds are known to be sharp, see~\cite[p.1419]{KwonLee2020}.
 
Suppose that for $d \geq 3$, there is $1 \leq k \leq d-1$ and $(\tilde{p},\tilde{q})$ such that
\eqref{eq:GeneralizedBochnerRieszNecessary} holds true for all regular hypersurfaces with $k$ non-vanishing
principal curvatures.
Then, let $d_1:=k+1$ and let $S= \{(\xi', \psi(\xi')) \in \R^{d_1} \, : \xi' \in B(0,c) \}$ be an elliptic
surface with $k= d_1-1$ positive principal curvatures. This can be trivially embedded into $\R^d$ considering $S' = \{
(\xi',\xi'', \psi(\xi')) \in \R^{d_1+d_2} \, : \xi' \in B(0,c) \}$.
We consider the operator
\begin{equation*}
(T^\alpha f) \widehat (\xi) = \frac{1}{\Gamma(1-\alpha)} \frac{\chi(\xi')}{(\xi_d - \psi(\xi'))_+^\alpha} \hat{f}(\xi).
\end{equation*}
Apparently,
\begin{equation*}
K^\alpha(x) = \frac{1}{(2 \pi)^d} \int_{\R^d} e^{ix.\xi} \frac{1}{\Gamma(1-\alpha)} \frac{\chi(\xi')}{(\xi_d - \psi(\xi'))_+^\alpha} d\xi = L^\alpha(x') \delta(x''),
\end{equation*}
where $x'=(x_1,\ldots,x_{d_1-1},x_{d_1+d_2})$, $x'' = (x_{d_1}, \ldots, x_{d_1+d_2-1})$, and
\begin{equation*}
L^\alpha(x) = \frac{1}{(2 \pi)^{d_1}} \int_{\R^{d_1}} e^{ix'.(\xi',\xi_{d})} \frac{1}{\Gamma(1-\alpha)} \frac{\chi(\xi')}{(\xi_d - \psi(\xi'))_+^\alpha} d\xi' d\xi_d. 
\end{equation*}
As $L^\alpha$ is the kernel of a Bochner-Riesz operator with negative index for an elliptic surface in $\R^{k+1}$, we know that for $\frac{1}{2} \leq \alpha < \frac{k+2}{2}$ the corresponding operator $R^\alpha f = L^\alpha * f: L^p(\R^{k+1}) \to L^q(\R^{k+1})$ is bounded if and only if $(1/p,1/q) \in \mathcal{P}_\alpha(k)$.
For $f \in L^p(\R^{k+1})$ consider $\tilde{f}(x) = f(x') \phi(x'')$ with $\phi \in C^\infty_c$. Using that
$T^\alpha:L^p(\R^d)\to L^q(\R^d)$ is bounded, we find
\begin{equation*}
\| R^\alpha f \|_{L^{\tilde{q}}(\R^{d_1})} \| \phi \|_{L^{\tilde{q}}(\R^{d_2})} = \| T^\alpha \tilde{f}
\|_{L^{\tilde{q}}(\R^d)} \lesssim \| \tilde{f} \|_{L^{\tilde{p}}(\R^d)} \lesssim \| f
\|_{L^{\tilde{p}}(\R^{d_1})} \| \phi \|_{L^{\tilde{p}}(\R^{d_2})}.
\end{equation*}
Hence, $R^\alpha:L^{\tilde p}(\R^{d_1})\to L^{\tilde q}(\R^{d_1})$ is bounded. By the sharpness of our
conditions for elliptic surfaces we infer $(1/\tilde{p},1/\tilde{q}) \in \mathcal
P_{\alpha}(d_1-1)=\mathcal{P}_\alpha(k)$, which is all we had to show. 

\medskip

On the other hand, we see that the estimates proved in Theorem \ref{thm:GeneralizedBochnerRieszEstimates} are not sharp for $ 0 <\alpha < 1/2$ as in the elliptic case better estimates are known to hold. Apparently, for $0<\alpha<1/2$ the geometry of the surface becomes more important. We believe that the optimal estimates will also depend on the difference between positive and negative curvatures as for oscillatory integral operators (cf. \cite{Wisewell2005,BourgainGuth2011,GuthHickmanIliopoulou2019}).

\section{Estimates for the regular part}
\label{section:CurvatureBoundedBelow}

In this section we estimate the contribution of $(E,H)$  with Fourier support close to 
smooth and regular component of the Fresnel surface by proving
Proposition~\ref{prop:PositiveCurvatureEstimate} and Proposition~\ref{prop:HamiltonEstimates}.
We recall that the first proposition deals with those parts where two principal curvatures are non-zero,
whereas the latter proposition deals with frequencies close to the Hamiltonian circles where only one
principal curvature is bounded away from zero. As explained in
the Introduction, our estimates result from uniform estimates for the Fourier multipliers
$(P(\xi)+i\delta)^{-1}$ as $\delta\to 0$ with $P(\xi)=p(\omega,\xi)$. We stress
that $\omega\in\R\sm\{0\}$ is fixed from now on.

\medskip

We first use our estimates for the Bochner-Riesz operator $T^\alpha$ from the previous section to
prove a Fourier restriction-extension estimate related to the two parts of the Fresnel surface mentioned
above. To carry out the estimates for both parts, we change from implicit to graph representation and apply
Theorem~\ref{thm:GeneralizedBochnerRieszEstimates} for $(\alpha,k)=(1,2)$, respectively $(\alpha,k)=(1,1)$.
The $L^p$-$L^q$-estimates are not affected by this change of representation, see
Corollary~\ref{cor:RestrictionEstimate}.  Then we use this result to prove uniform estimates for
$(P(D)+i\delta)^{-1}$ by a foliation with level sets of $P$ and
the Fourier restriction-extension theorem for the single layer.

\subsection{Parametric representation}
\label{subsection:ParametricRepresentation}
Already in \cite[p.~152]{ChoKimLeeShim2005} it was stated that a compact convex surface with curvature bounded
from below can be written locally as the graph  of an elliptic function. Moreover, it was stated that these
parametrizations do not affect Bochner-Riesz estimates. To see that this is also true in the non-elliptic
case, we explain this in a nutshell. 

\medskip

So let $M\subset\R^d$ be a compact part of a smooth regular hypersurface with
$k$ non-vanishing curvatures where $k\in\{1,\ldots,d-1\}$. After finite decompositions and rigid motions, which leave the $L^p-L^q$-estimates invariant, we find finitely many local graph representations of
$M$ of the form
\begin{equation*}
M_{loc} 
  = \{\xi=(\xi',\xi_d):\, p^{loc}(\xi) =0, \xi'\in B(0,c)\} 
  = \{(\xi', \psi(\xi')) : \, \xi' \in B(0,c) \},
\end{equation*}
where at least $k$ eigenvalues of the
Hessian matrices $\partial^2 \psi(x), x\in B(0,c)$ are bounded away from zero.
Taylor's formula gives for $\Delta:=\xi_d - \psi(\xi')$ 
\begin{equation*}
\begin{split}
p^{loc}(\xi) &= p^{loc}(\xi',\psi(\xi') + \Delta) \\
&= \int_0^1 \partial_d p^{loc}(\xi',\psi(\xi') + t \Delta) dt \cdot (\xi_d - \psi(\xi')) \\
&= m(\xi) (\xi_d - \psi(\xi')) \quad \text{ for } \xi \in B(0,c) \times (-c',c') =: B'.
\end{split}
\end{equation*}
By the properties of $p^{loc}$, we find $m \in C^\infty(B')$ with the properties
\begin{equation*}
0< c_1 \leq m \leq c_2 \;\text{ and }\; | \partial^\gamma m| \lesssim_\gamma 1 \text{ for } \gamma \in
\mathbb{N}_0^2.
\end{equation*}
The Fourier multiplier $\mathfrak{m}_\alpha$ defined by
\begin{equation*}
(\mathfrak{m}_\alpha f) \widehat (\xi) = \beta(\xi) m^{-\alpha}(\xi) \hat{f}(\xi), \; \alpha \in \R,
\end{equation*}
for a suitable cutoff $\beta \in C^\infty_c(B')$, defines a bounded mapping $L^p(\R^d)\to L^p(\R^d)$, $1\leq
p\leq \infty$ via Young's convolution inequality. Real interpolation of these estimates also yields the
boundedness $L^{p,r}(\R^d) \to L^{p,r}(\R^d)$ for $1<p<\infty,1 \leq r \leq \infty$. 
Accordingly, choosing a suitable finite partition of unity 
we find that the operators 
$$
  (\mathcal T^\alpha f) \widehat (\xi) := \frac{P(\xi)^{-\alpha}}{\Gamma(1-\alpha)} \hat f(\xi)
$$ 
are well-defined for $0<\alpha<\frac{k+2}{2}$ through analytic continuation and satisfy the same (weak)
$L^p-L^q$-estimates as the Bochner-Riesz operators that we analyzed in
Theorem~\ref{thm:GeneralizedBochnerRieszEstimates}. For $\alpha = 1$ this gives the following:
  
\begin{corollary} \label{cor:RestrictionEstimate}
  Let $K\subset\R^d$ be compact, $P\in C^\infty(K)$ such that  $\nabla P\neq 0$ on the hypersurface
  $M:=\{\xi\in K: P(\xi)=0\}$. Assume that in each point of $M$ at least $k$ principal curvatures are
  non-zero where $k\in\{1,\ldots,d-1\}$. Then, there is $t_0>0$ such that  
  $$
    \sup_{|t|<t_0} \left\|\int_{M_t} e^{ix.\xi} \hat g(\xi)\,d\sigma_t(\xi)
    \right\|_{L^{q,\infty}(\R^d)} \les \|g\|_{L^{p,1}(\R^d)}
  $$
  for $M_t:=\{\xi\in K:P(\xi)=t\}$ and $(\frac{1}{p},\frac{1}{q})\in\{B_{1,k},B_{1,k}'\}$.
  We have $(L^{p,1}(\R^d),L^q(\R^d))$-bounds for $(\frac{1}{p},\frac{1}{q})\in (B_{1,k},C_{1,k}]$,
  $(L^p(\R^d),L^{q,\infty}(\R^d))$-bounds for $(\frac{1}{p},\frac{1}{q})\in (B_{1,k}',C_{1,k}']$ and strong
  $(L^p(\R^d),L^{q}(\R^d))$-bounds for $(\frac{1}{p},\frac{1}{q})\in \mathcal P_1(k)$. 
\end{corollary}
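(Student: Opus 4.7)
The plan is to reduce the assertion to the Fourier restriction--extension case $\alpha=1$ of Theorem~\ref{thm:GeneralizedBochnerRieszEstimates}. By compactness of $K$ together with $\nabla P\neq 0$ on $M$, one first shrinks $K$ to a tubular neighbourhood of $M$ on which $|\nabla P|\geq d>0$. A finite open cover $\{U_j\}$ of $M$ can then be chosen such that, after a rotation aligning $\nabla P$ with $e_d$, on each $U_j$ the level sets $M_t\cap U_j$ are smooth graphs
\[
  \{(\xi',\psi_j(\xi',t)):\,\xi'\in B(0,c_j)\}
\]
for all $|t|<t_0$, with $\psi_j$ depending smoothly on $(\xi',t)$ by the implicit function theorem. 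A subordinate partition of unity $\{\chi_j\}$ along the tube reduces the claim to estimating each local piece $R_{j,t}g(x)=\int_{M_t\cap U_j} e^{ix\cdot\xi}\chi_j(\xi)\hat g(\xi)\,d\sigma_t(\xi)$.

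Parametrizing $M_t\cap U_j$ by its graph yields
\[
  R_{j,t}g(x)=\int_{B(0,c_j)} e^{ix\cdot(\xi',\psi_j(\xi',t))}\,a_{j,t}(\xi')\,\hat g(\xi',\psi_j(\xi',t))\,d\xi',
\]
with $a_{j,t}(\xi')=\chi_j(\xi',\psi_j(\xi',t))\sqrt{1+|\nabla_{\xi'}\psi_j(\xi',t)|^2}$. This is exactly the operator $T^1$ of Section~\ref{section:BochnerRieszEstimates} associated with the graph of $\psi_j(\cdot,t)$ and cutoff $a_{j,t}$, since $T^1$ is the analytic continuation of $T^\alpha$ at $\alpha=1$ and integrates $\hat g$ against the graph surface measure weighted by the cutoff. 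No extra Fourier multiplier appears; if one prefers, the factorization $P(\xi)-t=m_{j,t}(\xi)(\xi_d-\psi_j(\xi',t))$ obtained from Taylor expansion in $\xi_d$ (with $m_{j,t}\in C^\infty$ bounded above and below uniformly in $t$ since $|\nabla P|\geq d>0$) shows that the restriction onto $M_t$ in implicit form and in graph form differ only by such a smooth bounded factor.

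It remains to invoke Theorem~\ref{thm:GeneralizedBochnerRieszEstimates}(i) with $\alpha=1$ and the given $k$, which supplies the strong bounds on $\mathcal P_1(k)$, the restricted weak-type bounds at $B_{1,k},B_{1,k}'$, the $(L^{p,1},L^q)$-bounds on $(B_{1,k},C_{1,k}]$, and the $(L^p,L^{q,\infty})$-bounds on $(B_{1,k}',C_{1,k}']$. Uniformity over $|t|<t_0$ is precisely the content of the stability clause at the end of Theorem~\ref{thm:GeneralizedBochnerRieszEstimates}: the constants depend only on the lower bound for the $k$ non-vanishing principal curvatures and on finitely many $C^N$-norms of $\psi_j$ and $a_{j,t}$, all of which depend continuously on $t$. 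The main (and essentially only) technical obstacle is propagating the principal curvature bound from $M$ to the nearby $M_t$; this follows from the implicit formulas for the Gaussian and mean curvature of level sets of $P$ recorded at the end of Section~\ref{section:FresnelSurface}, which are continuous functions of $\xi$ on the tubular neighbourhood where $|\nabla P|$ is bounded below, so the bound on $M$ persists on $M_t$ after choosing $t_0$ sufficiently small. This closes the argument.
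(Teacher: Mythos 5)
Your proof is correct and follows essentially the same route as the paper: localize and rotate to graph form, factor out the smooth bounded multiplier via $P(\xi)-t=m_{j,t}(\xi)(\xi_d-\psi_j(\xi',t))$, invoke Theorem~\ref{thm:GeneralizedBochnerRieszEstimates}~(i) with $\alpha=1$, and obtain uniformity in $t$ from the stability clause together with the continuity of the principal curvatures across the level sets. The only cosmetic difference is that the paper sets up the family $\mathcal{T}^\alpha$ for general $0<\alpha<\frac{k+2}{2}$ by analytic continuation and then specializes to $\alpha=1$, whereas you work directly with the restriction--extension operator at $\alpha=1$.
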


As described at the end of Section \ref{section:FresnelSurface}, the principal curvatures of $M_t$ vary
continuously with respect to $t$ so that the curvature properties of $M_t$ for small $|t|$ are inherited from
those for $t=0$. 
The estimates leading to the proof of 
Proposition~\ref{prop:PositiveCurvatureEstimate} will result from 
an application of Corollary~\ref{cor:RestrictionEstimate}
for $d=3,K=\text{supp}(\beta_{11}),k=2$ whereas Proposition~\ref{prop:HamiltonEstimates} corresponds to the
choice $d=3,K=\text{supp}(\beta_{12}),k=1$.
To prove both results simultaneously, we therefore assume that $K\subset\R^3$ and $k\in\{1,2\}$ satisfy the
conditions of the corollary.

\subsection{Uniform estimates for the singular multiplier}
\label{subsection:UniformEstimatesSingularMultiplierI}
To prove the desired uniform resolvent estimates for $(P(\xi)+i\delta)^{-1}$, we consider
\begin{equation*}
\label{eq:Operator}
A_\delta f(x) = \int_{\R^d} \frac{\hat{f}(\xi) \beta(\xi)}{P(\xi) + i \delta} e^{ix.\xi} d\xi.
\end{equation*}
It is actually enough to show the restricted weak type bound
\begin{equation}
\label{eq:RestrictedBound}
\| A_\delta f \|_{L^{q_0,\infty}(\R^d)} \lesssim \| f \|_{L^{p_0,1}(\R^d)}
\end{equation}
for $(1/p_0,1/q_0) = (\frac{2(k+1)(k+2)}{k^2+6k+4},\frac{k}{2(k+1)})=B'$ and   
\begin{equation*}
\label{eq:RestrictedBoundII}
\| A_\delta f \|_{L^{q}(\R^d)} \lesssim \| f \|_{L^{p,1}(\R^d)}
\end{equation*}
for the remaining tuples $(1/p,1/q)\in (B',C']$ where  $C'=(1,\frac{k}{2(k+1)})$.
\begin{center}
\begin{figure}
\begin{tikzpicture}[scale=0.5]
\draw[->] (0,0) -- (13,0); \draw[->] (0,0) -- (0,13);
\draw (0,0) --(12,12); \draw(0,12) -- (12,12); \draw (12,12) -- (12,0);
\coordinate [label=above:$B$] (B) at (7,2.5);
\coordinate [label=above:$B'$] (B') at (9.5,5);
\coordinate [label=below:$C$] (C) at (7,0);
\coordinate [label=right:$C'$] (C') at (12,5);
\coordinate [label=left:$\frac{1}{2}$] (Y) at (0,6);
\coordinate [label=left:$1$] (YY) at (0,12);
\coordinate [label=below:$\frac{1}{2}$] (X) at (6,0);
\coordinate [label=below:$1$] (XX) at (12,0);
\coordinate [label=left:$\frac{1}{q}$] (YC) at (0,9);
\coordinate [label=below:$\frac{1}{p}$] (XC) at (9,0);
 
\draw [dotted] (XX) -- (YY);
\draw [help lines] (C) -- (B); \draw (B) -- (B');
\draw [help lines] (B') -- (C'); 
 
\foreach \point in {(C),(C'),(B),(B')}
\fill [black, opacity = 1] \point circle (3pt);
 
\end{tikzpicture}
\caption{All other claimed estimates result
from real interpolation with the corresponding dual estimates or with the trivial bound for
$(\frac{1}{p},\frac{1}{q})=(1,0)$.}
\end{figure}
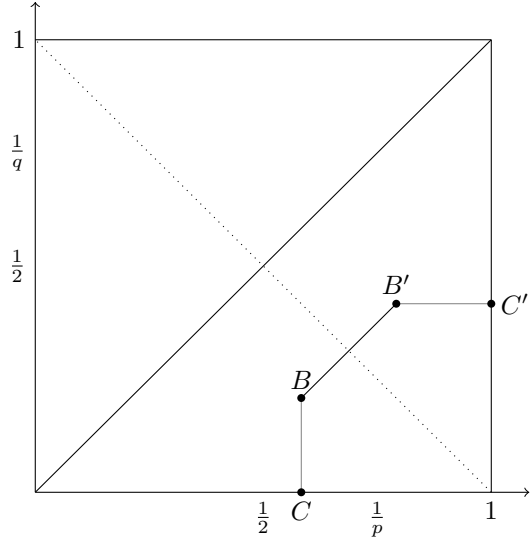
\end{center}



\medskip

We focus on \eqref{eq:RestrictedBound} in the following. To reduce our analysis to the region
$\{\xi\in K:|P(\xi)|<t_0\}$ for $t_0$ as in Corollary~\ref{cor:RestrictionEstimate}, 
we introduce a cut-off function $\chi\in C_c^\infty(\R^d)$ such that
$|P(\xi)|<t_0$ for $\chi(\xi)\neq 0$ and 
$P(\xi)>t_0/2$ for $\chi(\xi)\neq 1$. 
We then have 
\begin{equation*}
A_\delta f(x) = \int_{\R^d} e^{ix.\xi}  \frac{\chi(\xi) \beta(\xi)}{P(\xi) + i \delta} \hat{f}(\xi) \,d\xi +
\int_{\R^d} e^{ix.\xi} \frac{ (1-\chi(\xi)) \beta(\xi) }{P(\xi) + i \delta}\hat{f}(\xi)\, d\xi.
\end{equation*}
Since $P$ is smooth and bounded away from zero on $\text{supp}(1-\chi)$, the Fourier multiplier in the latter
expression is Schwartz and the claimed estimates (in fact even much stronger ones) hold for this second part.
For this reason we may from now on concentrate on the first part. We change to
generalized polar coordinates via the coarea formula:
\begin{align*}
\int_{\R^d} \frac{e^{ix.\xi}  \chi(\xi)\beta(\xi) \hat{f}(\xi)}{P(\xi) + i \delta} d\xi 
 &= (\mathfrak R(D)f)(x)+ i (\mathfrak I(D)f)(x) \\
 &= \int_{-t_0}^{t_0} 
  \frac{t}{t^2+ \delta^2}
 \left( \int_{M_t} e^{ix.\xi}\chi(\xi)\beta(\xi)|\nabla P(\xi)|^{-1} \hat{f}(\xi) \,d\sigma_t(\xi)\right) dt\\
 &\quad + i \int_{-t_0}^{t_0} 
  \frac{\delta}{t^2+ \delta^2}
 \left( \int_{M_t} e^{ix.\xi}\chi(\xi)\beta(\xi)|\nabla P(\xi)|^{-1} \hat{f}(\xi) \,d\sigma_t(\xi)\right) dt,
\end{align*}
where
\begin{equation*}
\frac{\chi(\xi)\beta(\xi)}{P(\xi) + i \delta} = \frac{\chi(\xi)\beta(\xi) P(\xi)}{P(\xi)^2 + \delta^2} + i
\frac{\chi(\xi)\beta(\xi) \delta}{P(\xi)^2+\delta^2} =: \mathfrak{R}(\xi) + i \mathfrak{I}(\xi).
\end{equation*}
 In the following we estimate this expression with the aid of Corollary~\ref{cor:RestrictionEstimate} 
 by decomposition in Fourier space as in~\cite[p.346]{JeongKwonLee2016}. 
 
 \medskip

 The estimate  for $\mathfrak{I}(D)$ is based on the coarea formula, Corollary~\ref{cor:RestrictionEstimate},
	 and Young's inequality in Lorentz spaces.   
\begin{equation} \label{eq:estimatesI(D)}
\begin{split}
  \|\mathfrak{I}(D)f\|_{L^{q_0,\infty}(\R^d)}
  &\les \int_{\R} \frac{\delta}{t^2 + \delta^2} \left\| \int_{M_t} e^{ix.\xi}
 \chi(\xi)\beta(\xi)|\nabla P(\xi)|^{-1} \hat{f}(\xi) d\sigma_t(\xi) \right\|_{L^{q_0,\infty}(\R^d)} \,dt \\
  &\lesssim \int_{\R} \frac{\delta}{t^2 + \delta^2}\| \mathcal
  F^{-1}(\chi(\xi)\beta(\xi)|\nabla P(\xi)|^{-1}\hat f(\xi)) \|_{L^{p_0,1}(\R^d)}\,dt \\
  &\lesssim \int_{\R} \frac{\delta}{t^2 + \delta^2} \|f \|_{L^{p_0,1}(\R^d)}\,dt \\
  &\lesssim \| f \|_{L^{p_0,1}(\R^d)}.
\end{split}
\end{equation}

\medskip

We turn to the estimate of $\mathfrak{R}(D)$, which requires an additional decomposition: Let $\phi \in
\mathcal{S}(\R)$ be such that $ \text{supp} (\hat\phi)  \subseteq [-2,-1/2] \cup [1/2,2]$ with $\tilde\phi(t):=t\phi(t)$ and
\begin{equation*}
\sum_{j=-\infty}^\infty  \tilde\phi(2^{-j} t) = 1 \qquad (t\in\R\sm\{0\}).
\end{equation*}
For the existence of $\phi$ we refer to the proof of~\cite[Lemma~2.2]{JeongKwonLee2016}, where it is denoted
by $\psi$.
 We split
\begin{align*}
A_j(\xi) &= \mathfrak{R}(\xi) \tilde{\phi}(2^{-j} P(\xi)) &&(2^j < |\delta|), \\ 
B_j(\xi) &= \left(\mathfrak{R}(\xi) - \frac{\chi(\xi)\beta(\xi)}{P(\xi)} \right) \tilde{\phi}(2^{-j}
P(\xi)) &&(2^j \geq \delta|), \\
C_j(\xi) &=  \frac{\chi(\xi)\beta(\xi)}{P(\xi)} \tilde{\phi}(2^{-j} P(\xi)) &&(2^j \geq |\delta|).
\end{align*}
The coarea formula, Minkowski's inequality, and Corollary~\ref{cor:RestrictionEstimate} yield as above
\begin{align} \label{eq:EstimatesAj}
  \begin{aligned}
&\quad \left\| \mathcal{F}^{-1} \left( \sum_{2^j < |\delta|} A_j(\xi) \hat{f}(\xi)
\right)\right\|_{L^{q_0,\infty}(\R^d)}\\
&\leq \sum_{2^j < |\delta|} \left\| \int_{-t_0}^{t_0}  \frac{t\tilde{\phi}(2^{-j} t)}{t^2 +
\delta^2} \left(\int_{M_t} e^{ix.\xi}  \chi(\xi)\beta(\xi) |\nabla P(\xi)|^{-1}
\hat{f}(\xi) \,d\sigma_t(\xi)\right)  \,dt \right\|_{L^{q_0,\infty}(\R^d)} \\
&\lesssim  \sum_{2^j < |\delta|}  \int_{\R} \frac{|t\tilde\phi(2^{-j}t)|}{t^2 + \delta^2}
\left\| \int_{M_t} e^{ix.\xi}  \chi(\xi)\beta(\xi) |\nabla P(\xi)|^{-1} \hat{f}(\xi)
\,d\sigma_t(\xi)  \right\|_{L^{q_0,\infty}(\R^d)} \,dt \\
&\lesssim  \sum_{2^j < |\delta|}  \int_{\R} \frac{2^j}{t^2 + \delta^2}  \|f\|_{L^{p_0,1}(\R^d)}
\,dt \\
&\lesssim    \int_{\R} \frac{\delta}{t^2 + \delta^2}
\,dt \, \|f\|_{L^{p_0,1}(\R^d)}  \\
 &\lesssim \|f\|_{L^{p_0,1}(\R^d)}. 
  \end{aligned}
\end{align}
Here we used the estimate $|\tilde \phi(s)|\les s^{-1}$, which holds because $\tilde\phi$ is a Schwartz
function. By similar means, we find
\begin{align} \label{eq:EstimateBj}
  \begin{aligned}
&\quad \left\| \mathcal{F}^{-1} \left( \sum_{2^j \geq |\delta|} B_j(\xi) \hat{f}(\xi)
\right)\right\|_{L^{q_0,\infty}(\R^d)}\\
&= \left\| \mathcal{F}^{-1} \left( \sum_{2^j \geq |\delta|}
\frac{\delta^2\tilde{\phi}(2^{-j}
P(\xi))}{P(\xi)(P(\xi)^2+\delta^2)} \chi(\xi)\beta(\xi) \hat{f}(\xi)
\right)\right\|_{L^{q_0,\infty}(\R^d)}\\
&\lesssim \sum_{2^j \geq |\delta|} \int_{\R} \frac{\delta^2 |\tilde{\phi}(2^{-j}t)|}{|t|(t^2
+ \delta^2)} \left\| \int_{M_t} e^{ix.\xi}  \chi(\xi)\beta(\xi) 
|\nabla P(\xi)|^{-1} \hat{f}(\xi) \,d\sigma_t(\xi) \right\|_{L^{q_0,\infty}(\R^d)} \,dt  \\
&\lesssim \sum_{2^j \geq |\delta|} \int_{\R}  \frac{\delta^2 2^{-j}}{t^2 + \delta^2} \|
f \|_{L^{p_0,1}(\R^d)} \,dt\\
&\lesssim   \int_{\R}  \frac{\delta}{t^2 + \delta^2} \|f \|_{L^{p_0,1}(\R^d)} \,dt\\
&\lesssim \| f \|_{L^{p_0,1}(\R^d)}.
\end{aligned}
\end{align}
Here, the estimate from the third to the fourth line uses
$|\tilde\phi(2^{-j}t)|
  = |\phi(2^{-j}t)| 2^{-j}t
  \les 2^{-j}t$.  
For the most involved estimate of $C_j$, we need the following lemma:
  
\begin{lemma}
\label{lem:CSum}
 Let  $\chi \in C^\infty_c(\R^d)$. Suppose $\phi \in \mathcal{S}(\R)$ with $\text{supp}
(\hat{\phi}) \subseteq [-2,-\frac{1}{2}] \cup [ \frac{1}{2}, 2]$ and that 
the level sets $\{\xi\in \text{supp}(\chi):P(\xi)=t\}$ have $k$ principal curvatures uniformly bounded from
below in modulus for all $|t|\leq t_0$.
  Then, for $1 \leq p,q \leq \infty$ with $q \geq 2$ and $\frac{1}{q} \geq \frac{k+2}{k} \big(1-\frac{1}{p}
\big)$, we find the following estimate to hold for all $\lambda>0$:
\begin{equation*}
\| \mathcal{F}^{-1} \big( \phi(\lambda^{-1} P(\xi)) \chi(\xi) \hat{f}(\xi) \big) \|_{L^{q}(\R^d)} 
\lesssim \lambda^{\frac{k+2}{2} - \frac{k+1}{q} } \| f \|_{L^{p}(\R^d)}.
\end{equation*}
\end{lemma}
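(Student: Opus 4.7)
The plan is to establish two endpoint estimates and combine them by Riesz--Thorin interpolation, then cover the full range using a smooth frequency cutoff. The endpoints are (a) $\|S_\lambda f\|_{L^{q_1}(\R^d)}\les \lambda^{1/2}\|f\|_{L^2(\R^d)}$ with $q_1:=\frac{2(k+2)}{k}$, and (b) $\|S_\lambda f\|_{L^\infty(\R^d)}\les \lambda^{(k+2)/2}\|f\|_{L^1(\R^d)}$, where $S_\lambda$ denotes the operator on the left-hand side of the claimed inequality.

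For (a), I foliate by level sets via the coarea formula, writing $S_\lambda f(x)=\int_\R \phi(\lambda^{-1}t)\,E_t(\chi|\nabla P|^{-1}\hat f|_{M_t})(x)\,dt$ with $E_t g(x)=\int_{M_t}e^{ix.\xi}g(\xi)\,d\sigma_t(\xi)$ the extension operator on $M_t=\{P=t\}$. The hypothesis that every $M_t$ with $|t|<t_0$ carries $k$ non-vanishing principal curvatures yields the surface-measure decay $|\widehat{d\sigma_t}(x)|\les (1+|x|)^{-k/2}$, and Theorem~\ref{thm:KeelTao} applied with $\sigma=k/2$ (as in Lemma~\ref{lem:MultiplierBound}) gives $\|E_t g\|_{L^{q_1}(\R^d)}\les \|g\|_{L^2(M_t)}$ uniformly in $|t|\le t_0$. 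Minkowski's inequality together with Cauchy--Schwarz in $t$ and the coarea identity $\int \|\hat f|_{M_t}\|_{L^2(M_t)}^2\,dt=\int|\hat f|^2|\nabla P|\,d\xi \les \|f\|_{L^2}^2$ then yields (a), with the factor $\lambda^{1/2}$ arising as $\|\phi(\lambda^{-1}\cdot)\|_{L^2_t}$.

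For (b), I use the parametric representation of Subsection~\ref{subsection:ParametricRepresentation}: after a finite partition of unity, $P(\xi)=m(\xi)(\xi_d-\psi(\xi'))$ on each chart, with $m$ smooth and bounded away from zero and $\psi$ having $k$ non-vanishing principal curvatures. Substituting $y=\xi_d-\psi(\xi')$ and then $y=\lambda u$ in the inner integration produces, to leading order as $\lambda\to 0$, the factor $\lambda\cdot m(\xi',\psi(\xi'))^{-1}\hat\phi\bigl(-\lambda x_d/m(\xi',\psi(\xi'))\bigr)$; since $\hat\phi$ is supported in $\{|\tau|\sim 1\}$, this localizes $|x_d|\sim 1/\lambda$. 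On that range the remaining $\xi'$-integration carries the phase $x'.\xi'+x_d\psi(\xi')$ with large parameter $|x_d|\sim 1/\lambda$, so the curvature hypothesis on $\psi$ combined with stationary phase gives decay $(1+|x_d|)^{-k/2}\les \lambda^{k/2}$. Hence $|K_\lambda(x)|\les \lambda\cdot \lambda^{k/2}=\lambda^{(k+2)/2}$, which is (b); corrections from Taylor-expanding $m$ and $\chi$ in $\lambda u$ contribute extra positive powers of $\lambda$ and are harmless.

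Dualizing (a) gives $\|S_\lambda f\|_{L^2(\R^d)}\les \lambda^{1/2}\|f\|_{L^{q_1'}(\R^d)}$ with $q_1':=\frac{2(k+2)}{k+4}$, and Riesz--Thorin interpolation between this bound and (b) with parameter $\theta=2/q\in [0,1]$ reproduces the claimed exponent $\lambda^{(k+2)/2-(k+1)/q}$ along the boundary line $\frac{1}{p}=1-\frac{k}{q(k+2)}$ for $q\ge 2$. To reach the full region $\frac{1}{q}\ge \frac{k+2}{k}(1-\frac{1}{p})$, I exploit that $(S_\lambda f)^\wedge$ is supported in $\text{supp}(\chi)$: for any $\tilde\chi\in C_c^\infty(\R^d)$ with $\tilde\chi\equiv 1$ on $\text{supp}(\chi)$ one has $S_\lambda=S_\lambda\tilde\chi(D)$, and since $\mathcal F^{-1}\tilde\chi\in\mathcal{S}(\R^d)$, Young's convolution inequality yields $\|\tilde\chi(D)f\|_{L^{p_0}(\R^d)}\les \|f\|_{L^p(\R^d)}$ for every $p\le p_0$, which is exactly what is needed. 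The main obstacle is the kernel estimate (b) in the general setting: the coupling between $\xi'$ and $y$ introduced by the factor $m(\xi)$ spoils the clean factorization available in Lemma~\ref{lem:KernelEstimate}, and some care is needed to control the errors produced by freezing $m$ at $y=0$ in each chart.
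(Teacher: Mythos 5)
Your endpoint (a) and the interpolation scheme are sound but take a slightly different route from the paper. For (a) the paper applies Plancherel to $\|S_\lambda f\|_{L^2}$ and uses the $L^{q_1'}\to L^2$ Fourier restriction estimate (with a separate third endpoint $(p,q)=(1,2)$ obtained by replacing it with the trivial bound $|\hat f|\leq\|f\|_{L^1}$), while you bound $\|S_\lambda f\|_{L^{q_1}}$ via Minkowski, the $L^2\to L^{q_1}$ extension estimate and Cauchy--Schwarz; these are dual formulations, and your later passage to $L^{q_1'}\to L^2$ by duality is legitimate because $S_\lambda^*$ has a multiplier of the same type (the support of $\widehat{\bar\phi}$ is again $[-2,-\tfrac12]\cup[\tfrac12,2]$). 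Your way of reaching the interior of the region — Riesz--Thorin along the boundary line and then lowering $p$ by $S_\lambda=S_\lambda\tilde\chi(D)$ with Young's inequality — is a mild variant of the paper's three-endpoint interpolation and is equally valid.

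The kernel estimate (b) is where there is a genuine gap, and you correctly identify it yourself at the end. The paper's proof of (b) does \emph{not} pass through the factorization $P(\xi)=m(\xi)(\xi_d-\psi(\xi'))$ and so never encounters the coupling between $\xi'$ and $\xi_d$ introduced by $m$. Instead it writes $\phi(\lambda^{-1}P(\xi))=\tfrac{1}{2\pi}\int\hat\phi(r)e^{ir\lambda^{-1}P(\xi)}\,dr$ and the coarea formula to express
$K(x)=c\int_{|r|\sim1}\hat\phi(r)\bigl(\int_{-t_0}^{t_0}e^{ir\lambda^{-1}t}a(t,x)\,dt\bigr)\,dr$
with $a(t,x)=\int_{M_t}e^{ix.\xi}\chi(\xi)|\nabla P(\xi)|^{-1}\,d\sigma_t(\xi)$. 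For $|x|\lesssim\lambda^{-1}$ the full phase $x.\xi+r\lambda^{-1}P(\xi)$ has gradient of size $\gtrsim\lambda^{-1}$ (since $|\nabla P|\gtrsim 1$ on $\text{supp}(\chi)\cap\{|P|\leq t_0\}$ and $|r|\sim1$), and non-stationary phase gives $|K(x)|\lesssim_N\lambda^N$; for $|x|\gtrsim\lambda^{-1}$, the surface-measure decay $|a(t,x)|\lesssim(1+|x|)^{-k/2}$ combined with $\int|\phi(\lambda^{-1}t)|\,dt\lesssim\lambda$ gives $|K(x)|\lesssim\lambda(1+|x|)^{-k/2}\lesssim\lambda^{(k+2)/2}$. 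Your graph-parametrization route can probably be made to work: the coefficients of the $\xi_d$-Taylor expansion of $\phi(m(\xi',\xi_d)u)\chi(\xi',\xi_d)$ at $\xi_d=\psi(\xi')$ are linear combinations of $u^a\phi^{(b)}(m_0(\xi')u)$ times smooth functions of $\xi'$, whose Fourier transforms in $u$ still have compact support in $\{|\tau|\sim1\}$, and the remainder after $N$ terms is $O(\lambda^N)$ in sup norm, which is acceptable for $N>\tfrac{k+2}{2}$. But asserting that the corrections ``contribute extra positive powers of $\lambda$ and are harmless'' is precisely the nontrivial localization claim one has to prove; as written, this step of your argument is incomplete, and the paper sidesteps it entirely.
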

\begin{proof}
 By interpolation, it suffices to prove the endpoint estimates for 
$(p,q) = (\frac{2(k+2)}{k+4},2)$ and $(p,q)=(1,\infty),(p,q)=(1,2)$.  
Since the multiplier is regular for $\lambda \geq 1$,
we may henceforth suppose $\lambda \leq 1$. For $q=2$ we use Plancherel's theorem,
the coarea formula and the $L^{\frac{2(k+2)}{k+4}}$-$L^2$ restriction-extension estimate from
Corollary~\ref{cor:RestrictionEstimate}:
\begin{align*}
&\quad \| \mathcal{F}^{-1} \big( \phi(\lambda^{-1} P(\xi)) \chi(\xi) \hat{f}(\xi) \big)\|_{L^2(\R^d)}^2 \\
&= \|  \phi(\lambda^{-1} P(\xi)) \chi(\xi) \hat{f}(\xi) \|_{L^2(\R^d)}^2 \\  
&= \int_{-t_0}^{t_0} | \phi(\lambda^{-1} t)|^2 \left(\int_{M_t} |\hat{f}(\xi)|^2
|\chi(\xi)|^2 |\nabla P(\xi)|^{-1} \,d\sigma_t(\xi) \right) \,dt \\
&\lesssim  \int_{-t_0}^{t_0} | \phi(\lambda^{-1} t)|^2
\|f\|_{L^{\frac{2(k+2)}{k+4}}(\R^d)}^2 \,dt \\
&\lesssim \lambda \| f \|^2_{L^{\frac{2(k+2)}{k+4}}(\R^d)}.
\end{align*}
Using the trivial estimate $|\hat f(\xi)|\leq \|f\|_{L^1(\R^d)}$ instead (from the third to the fourth line),
we find the endpoint estimate for $(p,q)=(1,2)$.

\medskip

For the endpoint $(p,q) = (1,\infty)$ it suffices to show the kernel estimate
\begin{equation*}
|K(x)| \lesssim \lambda^{\frac{k+2}{2}}
\end{equation*}
for
\begin{align*}
K(x)
 &:=  \mathcal{F}^{-1} \big( \phi(\lambda^{-1} P(\xi)) \chi(\xi)\big)(x) \\
 &= \frac{1}{(2\pi)^{3/2}} \int_{\R^d} e^{ix.\xi} \phi(\lambda^{-1} P(\xi)) \chi(\xi) \,d\xi \\   
 &= \frac{1}{2\pi} \int_{\R^d} e^{ix.\xi} \chi(\xi) \int_{\frac{1}{2}}^2 e^{ir \lambda^{-1} P(\xi)}
 \hat{\phi}(r)\,dr\,d\xi \\
  &= \frac{1}{2\pi} \int_{\frac{1}{2}}^2 \hat{\phi}(r) \big( \int_{-t_0}^{t_0} 
  e^{ir\lambda^{-1}t} \underbrace{\big( \int_{M_t} e^{ix.\xi}
  \chi(\xi)|\nabla P(\xi)|^{-1} \,d\sigma_t(\xi)\big)}_{=:a(t,x)} \,dt \big) \,dr  
\end{align*}
The function $a$ is smooth, all its derivatives are bounded functions and its support is bounded
with respect to $t$. So the principle of non-stationary phase yields for $|x| \ll \lambda^{-1}$ and all
$M\in\N$
\begin{align*}
  |K(x)| 
  \les_M \int_{\frac{1}{2}}^2 |\hat{\phi}(r)| |r\lambda^{-1}|^{-M}   \,dr 
  \les_M \lambda^M.
\end{align*}
In particular, this holds for $M=\frac{k+2}{2}$.
For $|x| \gtrsim \lambda^{-1}$ we can use the dispersive estimate  $|a(t,x)|\les (1+|x|)^{-k/2}$, which holds
due to method of stationary phase and the presence of $k$ non-vanishing principal curvatures. We thus get for
$|x| \gtrsim \lambda^{-1}$
\begin{align*}
|K(x)| 
&= \frac{1}{2\pi} \left| \int_{-t_0}^{t_0} \phi(\lambda^{-1}t)  a(t,x) \,dt  \right| \\ 
&\lesssim  \int_{-t_0}^{t_0} |\phi(\lambda^{-1}t)| (1+|x|)^{-\frac{k}{2}} \,dt \\ 
&\lesssim \lambda (1+|x|)^{-\frac{k}{2}}  \\
&\lesssim \lambda^{\frac{k+2}{2}}.
\end{align*}
The proof is complete. 
\end{proof}

The lemma allows to bound the $C_j$-terms as follows:
\begin{align*} 
\| C_j(D) f \|_{L^\sigma(\R^d)} 
 &=  \| \mathcal F^{-1}\left(\frac{\chi(\xi)\beta(\xi)}{P(\xi)}  \tilde{\phi}(2^{-j} P(\xi)) \hat
  f(\xi)\right)\|_{L^\sigma(\R^d)} \\
 &=  2^{-j} \| \mathcal F^{-1}\left(\chi(\xi)\beta(\xi) \phi(2^{-j} P(\xi)) \hat f(\xi) \right)\|_{L^\sigma(\R^d)} \\ 
 & \lesssim 2^{j \big( \frac{k}{2} - \frac{k+1}{\sigma} \big)} \| f \|_{L^r(\R^d)}
\end{align*}
for $2 \leq \sigma \leq \infty$, $\frac{1}{\sigma} \geq \frac{k+2}{k}\big( 1- \frac{1}{r} \big)$.
Using Lemma~\ref{lem:SummationLemma}, ~\eqref{eq:SummationI} for $q_1,q_2,p_1,p_2$ defined as
$$
  \frac{k}{2}-\frac{k+1}{q_1} = \eps,\quad
  \frac{k}{2}-\frac{k+1}{q_2} = -\eps,\quad
  \frac{1}{q_i} =: \frac{k+2}{k}\big( 1- \frac{1}{p_i} \big)
$$  
for small $\eps>0$, we finally get due to $\frac{1}{q_0}=\frac{k}{2(k+1)}=\frac{1}{2q_1}+\frac{1}{2q_2} =
\frac{k+2}{k}\big( 1- \frac{1}{p_0}\big)$
\begin{align} \label{eq:EstimatesCj}
   \left\| \mathcal{F}^{-1} \left( \sum_{2^j \geq  |\delta|} C_j(\xi) \hat{f}(\xi)
   \right)\right\|_{L^{q_0,\infty}(\R^d)}
   \les \|f\|_{L^{p_0,1}(\R^d)}. 
\end{align}
 
\medskip

Combining the
estimates~\eqref{eq:estimatesI(D)}-\eqref{eq:EstimatesCj}, 
we get the claimed estimate 
$$
  \| A_\delta f \|_{L^{q_0,\infty}(\R^d)} \lesssim \| f \|_{L^{p_0,1}(\R^d)}.
$$
This proves Proposition~\ref{prop:PositiveCurvatureEstimate} ($k=2$) and
Proposition~\ref{prop:HamiltonEstimates} ($k=1$). \hfill $\Box$

\subsection{An improved Fourier restriction--extension estimate for the Fresnel surface close to Hamiltonian
circles}

The purpose of this section is to point out how the special degeneracy along the Hamiltonian circles might
allow for improved estimates in Proposition~\ref{prop:HamiltonEstimates}. In our proof in the previous
section we exploited that one principal curvature is bounded away from zero close to these circles. But actually we have
more: The other principal curvature does not vanish identically in that region, but only vanishes at the
Hamiltonian circle, which is a curve on the Fresnel surface. We refer to Figure~\ref{fig:Fresnel} for an
illustration of the situation. 

For surfaces with vanishing Gaussian curvature, but no flat points, improved results were
established in special cases. For in a sense generic surfaces in $\R^3$ with Gaussian curvature vanishing along a
one-dimensional sub-manifold, the decay
\begin{equation*}
\label{eq:ImprovedDecay}
|\hat{\mu}(\xi)| \lesssim \langle \xi \rangle^{-\frac{3}{4}}
\end{equation*}
was shown by Erd\H{o}s--Salmhofer \cite{ErdosSalmhofer2007}. 
(In our proof we used Bochner-Riesz estimates resulting from the weaker
bound $|\hat{\mu}(\xi)| \lesssim \langle \xi \rangle^{-\frac{1}{2}}$, which
is~\eqref{eq:DecayFourierTransform} for $k=1$.) We shall show the corresponding $L^p$--$L^q$ estimates for
these surfaces in future work. However, these results are not applicable in our case. Indeed, one can still
show that the gradient of the curvature $\nabla_{\Sigma} K(p) \neq 0$ does not vanish along the Hamiltonian
circles (cf. \cite[Assumption~2]{ErdosSalmhofer2007}) and any unit vector has only finitely many preimages
under the normal $\nu: \Sigma \to \mathbb{S}^{2}$ (cf. \cite[Assumption~3]{ErdosSalmhofer2007}).
It turns out that the transversality assumption \cite[Assumption~4]{ErdosSalmhofer2007} regarding the
Hamiltonian circle and the direction of the non-vanishing principal curvature fails: On the curves $\Gamma =
\{ K = 0 \}$, exactly one of the principal curvatures vanishes. We define a (local) unit vectorfield $Z \in T
\Sigma$ along $\Gamma$ in the tangent plane of $\Sigma$. On $\Gamma$, $Z$ is supposed to point into the direction of the vanishing principal curvature, and $Z$ can be extended to a neighbourhood of $\Gamma$
as the direction of the principal curvature that is small and vanishes on $\Gamma$. To apply the arguments
from \cite{ErdosSalmhofer2007}, it is required that $Z$ is transversal to $\Gamma$ up to finitely many
points, and the angle between $Z$ and $\Gamma$ increases linearly.

 But along the Hamiltonian circles, we find, assuming w.l.o.g. $\eps_1<\eps_2<\eps_3$,
\begin{equation*}
\alpha(s,t) = \varepsilon_2.
\end{equation*}
This allows to solve for $t=t(s)$ (cf. \eqref{eq:ParametrizationHamiltonianCircles}):
\begin{equation*}
t = \frac{\varepsilon_1 \varepsilon_3 (\varepsilon_2 -s) }{s^2 - (\varepsilon_1 + \varepsilon_2 + \varepsilon_3) s + (\varepsilon_1 \varepsilon_2 + \varepsilon_1 \varepsilon_3 + \varepsilon_2 \varepsilon_3) - \varepsilon_1 \varepsilon_3} = \frac{\varepsilon_1 \varepsilon_3}{\varepsilon_1 + \varepsilon_3 -s}.
\end{equation*}
and further,
\begin{equation*}
\frac{\partial t}{\partial s} = \frac{\varepsilon_1 \varepsilon_3}{(\varepsilon_1 + \varepsilon_3 -s)^2}.
\end{equation*}
In the following we shall see that the direction of the vanishing principal curvature is tangential to the
Hamiltonian circles. This violates the transversality assumption.\\
For this purpose, consider $\xi = \xi(s,t)$ with $t=\frac{\varepsilon_1 \varepsilon_3}{\varepsilon_1 +
\varepsilon_3 -s}$. This yields a parametrization of the Hamiltonian circles. For a tangent vector we find
\begin{equation*}
\frac{d \xi}{d s} = \frac{\partial \xi}{\partial s} + \frac{\partial \xi}{\partial t} \frac{\partial t}{\partial s} = e_s + \frac{\partial t}{\partial s} e_t.
\end{equation*}
A straight-forward computation shows that this is in the kernel of the second fundamental form II, which was computed in $(s,t)$ coordinates in Proposition \ref{prop:SecondFundamentalForm}:
\begin{equation*}
\text{II} (1, \frac{\partial t}{\partial s}) = 0.
\end{equation*}
Note that
\begin{align*}
t-\varepsilon_1 = \frac{t (s-\varepsilon_1)}{\varepsilon_3}, \quad t- \varepsilon_3 = \frac{t(s-\varepsilon_3)}{\varepsilon_1}.
\end{align*}
Consequently,
\begin{equation*}
\frac{t^2}{\varepsilon_1 \varepsilon_3} (s-\varepsilon_1) (s-\varepsilon_2) (s- \varepsilon_3) = (t-\varepsilon_1) (s-\varepsilon_2) (t-\varepsilon_3).
\end{equation*}
From this follows
\begin{equation*}
\frac{P_L}{(s-\varepsilon_1)(s-\varepsilon_2)(s-\varepsilon_3)} = - \frac{t^2}{\varepsilon_1 \varepsilon_3},
\end{equation*}
and thus,
\begin{equation*}
\frac{\partial t}{\partial s} + \frac{P_L(s,t)}{(s-\varepsilon_1)(s-\varepsilon_2)(s-\varepsilon_3)} = 0,
\end{equation*}
with $P_L$ defined in Section \ref{section:FresnelSurface}.\\
Furthermore,
\begin{equation*}
2t-s = t \big( 1 + \frac{(s-\varepsilon_1)(s-\varepsilon_3) }{\varepsilon_1 \varepsilon_3} \big),
\end{equation*}
and therefore, by plugging the definition of $\alpha$ into $P_N$,
\begin{equation*}
\frac{P_N}{t} = (s-t) \varepsilon_1 \varepsilon_3 + (s-\varepsilon_1) (s-\varepsilon_3) \varepsilon_2 = -(s-\varepsilon_1) (t-\varepsilon_2) (s-\varepsilon_3).
\end{equation*}
Thus,
\begin{equation*}
\frac{P_N}{t(t-\varepsilon_1)(t-\varepsilon_2)(t-\varepsilon_3)} = - \frac{(s-\varepsilon_1)(s-\varepsilon_3)}{(t-\varepsilon_1)(t-\varepsilon_3)} = - \frac{\varepsilon_1 \varepsilon_3}{t^2}.
\end{equation*}

Still, there is hope that one can show better decay
\begin{equation*}
|\hat{\mu}(\xi)| \lesssim \langle \xi \rangle^{-(\frac{1}{2} + \delta)}
\end{equation*}
for some $\delta > 0$ using stationary phase estimates for functions with degenerate
Hessian as in Ikromov--M\"uller~\cite[Corollary~1.6]{IkromovMueller2011} applied by
Greenblatt~\cite{Greenblatt2016}; see also~\cite{Varchenko1976, PhongStein1997, Greenblatt2009,
IkromovKempeMueller2010, IkromovMueller2016} and references therein.  Since the singular points of our
Fresnel surface (to be discussed in the following section) give rise to the worse total decay 
$|\hat{\mu}(\xi)| \lesssim \langle \xi \rangle^{-\frac{1}{2}}$ of the Fourier
transform, the analysis is not detailed here.

\section{Estimates for neighbourhoods of the singular points}
\label{section:SingularPoints}
The purpose of this section is to prove the estimate
\begin{equation*}
\| \beta_{13}(D) (E,H) \|_{L^q(\R^3)} \lesssim \| \beta_{13}(D) (J_e,J_m) \|_{L^p(\R^3)}
\end{equation*}
with $\beta_{13}$ defined in Section \ref{section:Reductions} as smooth cutoff localizing to a neighbourhood
of the singular points. We shall also take the opportunity to derive estimates for perturbed cone multipliers
in $\R^d$. These naturally arise for surfaces $S= \{ \xi \in \R^d : p(\xi) = 0 \}$ at singular points $\xi \in
S$ with $\nabla p(\xi) = 0$, and $\partial^2 p$ with signature $(1,d-1)$. \\
In the first step, to clarify the nature of $S$, we shall change to parametric representation in Section
\ref{subsection:ParametricRepresentationSingularPoints}. We will see that it suffices to analyze two perturbed half-cones
\begin{equation*}
\{ \xi_d = \pm |\xi'| + O(|\xi'|^2) \}, \quad i=1,2.
\end{equation*}
This yields that for a small, but fixed distance from the origin, we have the curvature properties of the
cone and can apply Theorem \ref{thm:GeneralizedBochnerRieszEstimates} with $\alpha =1$, $k=d-2$ to derive
Fourier restriction-extension estimates for the layers. Then, the arguments of Section
\ref{subsection:UniformEstimatesSingularMultiplierI} apply again.
We derive the estimates for the generalized cone multiplier and \eqref{eq:UniformEstimateSingular} by an
additional Littlewood-Paley decomposition and a scaling argument in
Subsection~\ref{subsection:PerturbedConeMultiplier}.

\medskip

Coming back to Fresnel's surface, we first prove that $S$ looks like a cone around the singular points. We
recall that we assumed without loss of generality $\mu_1=\mu_2=\mu_3=\omega=1$ so that the results from
Section~\ref{section:FresnelSurface} apply for $S=S^*$.
\begin{proposition}
  Set $\zeta\in S$ be one of the four singular points given by Proposition~\ref{prop:singularpoints}. Then 
  $$
    p(\omega,\xi) =  \frac{1}{2}(\xi-\zeta)^T  D^2p(\omega,\zeta)(\xi-\zeta) +
    O(|\xi-\zeta|^3) \quad\text{as }\xi\to\zeta
  $$
  and $D^2p(\omega,\zeta)$ has two positive and one negative eigenvalue.
\end{proposition}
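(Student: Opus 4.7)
The first assertion is immediate: by the very definition of singular point $\nabla p(\omega,\zeta)=0$, and since $\zeta\in S$ we have $p(\omega,\zeta)=0$, so the Taylor expansion of $p(\omega,\cdot)$ around $\zeta$ starts at second order. The content of the proposition lies in the signature statement, for which I plan to compute $D^2 p(\omega,\zeta)$ explicitly. By the reduction carried out at the beginning of Section \ref{section:FresnelSurface} I may assume $\omega=\mu_1=\mu_2=\mu_3=1$, so that $p(1,\xi)$ is a nonzero constant multiple of $\mathcal N(\xi)=1-q_0^*(\xi)+q_1^*(\xi)$. The signature of $D^2 p(1,\zeta)$ therefore equals the signature of $D^2\mathcal N(\zeta)$ up to a possible global sign flip.

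Because $\mathcal N$ is even in each coordinate, the sign symmetries $\sigma_i\in\{-1,+1\}$ from Proposition \ref{prop:singularpoints} reduce the analysis to a single singular point. Without loss of generality I assume $\varepsilon_1<\varepsilon_2<\varepsilon_3$ and work with $\zeta=(\zeta_1,0,\zeta_3)$, $\zeta_1,\zeta_3>0$. From the explicit formulas for $\zeta_1^2,\zeta_3^2$ one deduces the two identities
\[
\zeta_1^2+\zeta_3^2=\varepsilon_2,\qquad \varepsilon_1\zeta_1^2+\varepsilon_3\zeta_3^2=\varepsilon_1\varepsilon_3,
\]
which will be essential to keep the subsequent calculation manageable.

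Because $\zeta_2=0$, the mixed Hessian entries $\partial_{12}\mathcal N(\zeta)$ and $\partial_{23}\mathcal N(\zeta)$ both vanish, so the Hessian decouples into a $1\times 1$ block in direction $e_2$ and a $2\times 2$ block in the $(e_1,e_3)$-plane. Direct differentiation, combined with the singular-point relations $t_1(\zeta)=t_3(\zeta)=0$ (which cancel the bulk of the terms), yields
\[
\partial_{11}\mathcal N(\zeta)=\frac{8\zeta_1^2}{\varepsilon_2\varepsilon_3},\quad \partial_{33}\mathcal N(\zeta)=\frac{8\zeta_3^2}{\varepsilon_1\varepsilon_2},\quad \partial_{13}\mathcal N(\zeta)=\frac{4(\varepsilon_1+\varepsilon_3)\zeta_1\zeta_3}{\varepsilon_1\varepsilon_2\varepsilon_3}.
\]
The $(e_1,e_3)$-block has positive trace, and strict AM–GM for $\varepsilon_1\neq\varepsilon_3$ gives $(\varepsilon_1+\varepsilon_3)^2>4\varepsilon_1\varepsilon_3$, which after cancellation forces its determinant to be negative; it therefore contributes exactly one positive and one negative eigenvalue.

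Finally, using the two identities displayed above, a short computation gives
\[
\partial_{22}\mathcal N(\zeta)=\frac{2(\varepsilon_2-\varepsilon_1)(\varepsilon_2-\varepsilon_3)}{\varepsilon_1\varepsilon_2\varepsilon_3}<0
\]
under the assumed ordering. Hence $D^2\mathcal N(\zeta)$ has signature $(1,2)$, and checking that $p(\omega,\cdot)$ is a negative constant multiple of $\mathcal N$ in the chosen normalization turns this into the desired signature $(2,1)$ for $D^2 p(\omega,\zeta)$. The main obstacle is purely algebraic: the simplifications of $\partial_{11}\mathcal N(\zeta)$ and $\partial_{22}\mathcal N(\zeta)$ rely on nontrivial cancellations enforced by the singular-point relations together with the two identities above, and these are well-suited to cross-checking via the MAPLE sheet mentioned in the Introduction.
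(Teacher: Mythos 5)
Your proposal is correct and follows essentially the same route as the paper's proof: compute the Hessian of the symbol at the singular point, exploit the block decoupling forced by $\zeta_2=0$, and establish indefiniteness of the $2\times 2$ block in the $(e_1,e_3)$-plane (negative determinant) together with the complementary sign of the $1\times 1$ block in the $e_2$-direction. Your formulas for $D^2\mathcal N(\zeta)$ agree with the paper's entries $D_{ij}$ of $D^2p(\omega,\zeta)$ after the global sign flip $p(1,\cdot)=-\mathcal N(\cdot)$ in the normalization $\omega=\mu_1=\mu_2=\mu_3=1$, and your sign-bookkeeping (signature $(1,2)$ for $\mathcal N$, hence $(2,1)$ for $p$) is handled correctly.
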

\begin{proof}
  By Taylor's theorem, $p(\omega,\zeta)=0$ (because $\zeta\in S$), and $\nabla p(\omega,\zeta)=0$ (because
  $\zeta$ is singular), it suffices to prove that $D^2p(\omega,\zeta)$ has two positive  and one
  negative eigenvalue.
   For notational convenience we assume $\eps_1<\eps_2<\eps_3$ and concentrate on the singular point 
  $\zeta=(\zeta_1,\zeta_3,\zeta_3)\in S$ given by 
  $$
    \zeta_1 = \sqrt{\frac{\varepsilon_3 (\varepsilon_1-\varepsilon_2)}{\varepsilon_1 -
    \varepsilon_3}}, 
    \qquad \zeta_2=0,
    \qquad 
    \zeta_3 = \sqrt{\frac{\varepsilon_1(\varepsilon_3 -\varepsilon_2)}{\varepsilon_3
    -\varepsilon_1}},
  $$
  Then we find  
  $$
    D^2p(\omega,\zeta) = \matIII{D_{11}}{0}{D_{13}}{0}{D_{22}}{0}{D_{13}}{0}{D_{33}},
$$
where (cf. \cite[pp.~74-75]{Liess1991})
$$
  D_{22} 
  = \frac{2}{\eps_1}+\frac{2}{\eps_3}-\frac{2(\eps_1+\eps_2)}{\eps_1\eps_2\eps_3}\zeta_1^2
  -\frac{2(\eps_2+\eps_3)}{\eps_1\eps_2\eps_3}\zeta_3^2 
  = \frac{2}{\eps_1\eps_2\eps_3} (\eps_1 - \eps_2)( \eps_2- \eps_3 )>0 
$$
and
\begin{align*}
  D_{11}
  &= \frac{2}{\eps_2}+\frac{2}{\eps_3}-\frac{12}{\eps_2\eps_3}\zeta_1^2 -
  \frac{2(\eps_1+\eps_3)}{\eps_1\eps_2\eps_3}\zeta_3^2
  =  -\frac{8(\eps_2 - \eps_1) }{\eps_2(\eps_3 - \eps_1)}<0,  \\ 
  D_{33} 
  &= -\frac{8(\eps_2 - \eps_3)}{\eps_2(\eps_1- \eps_3)},   \\
  D_{13}
  &=  -\frac{4(\eps_1+\eps_3)}{\eps_1\eps_2\eps_3} \zeta_1\zeta_3
   =  -\frac{4(\eps_1+\eps_3)\sqrt{(\eps_2-\eps_1)(\eps_3-\eps_2)}}{\eps_2\sqrt{\eps_1\eps_3}(\eps_3-\eps_1)},
   \\
  D_{11}D_{33} - D_{13}^2
  &= - \frac{16}{\eps_1\eps_2^2\eps_3}
  (\eps_2 - \eps_1)( \eps_3 - \eps_2) < 0.
\end{align*}
  So the symmetric $2\times 2$-submatrix with entries $D_{11},D_{13},D_{13},D_{33}$
  is indefinite and hence posseses one positive and one negative eigenvalue. This yields the claim.
\end{proof}

Accordingly, after suitable rotations, translations and multiplication by $-1$, we may suppose in the
following that the analyzed singular point lies in the origin and that the Taylor expansion of the Fourier
symbol around the singular point is given by
\begin{equation*}
  \tilde{p}(\xi) =  \xi_3^2 -  |\xi'|^2 + g(\xi), \quad |\partial^\alpha g(\xi)| \les_\alpha
  |\xi|^{3-|\alpha|} \quad (\alpha\in\N_0^3).
\end{equation*}
We will discuss the corresponding Fourier multiplier given by
\begin{equation*}
A_\delta f(x) =  \int_{\R^3} \frac{e^{ix.\xi} \beta(\xi) \hat{f}(\xi) }{\tilde{p}(\xi) + i \delta} d\xi,
\end{equation*}
where $\beta \in C^\infty_c(\R^3)$. The support of $\beta$ will later be assumed to be close to zero so that
the mapping properties of $A_\delta$ are determined by the Taylor expansion of $\tilde p$ around zero.
The aim is to show estimates
\begin{equation}
\label{eq:UniformEstimateSingular}
\| A_\delta f \|_{L^q(\R^3)} \lesssim \| f \|_{L^p(\R^3)}
\end{equation} 
for $p$,$q$ as in Proposition \ref{prop:SingularEstimate} as previously independent of $\delta$. 
This will be proved in Subsection~\ref{subsection:ApproximateSolutionsSingularPoints}.

\subsection{Parametric representation around the singular points}
\label{subsection:ParametricRepresentationSingularPoints} 
In this subsection we change to a parametric representation.  This requires additional arguments as $\tilde p$
vanishes of second order at the origin. We find the following:

\begin{proposition}\label{prop:ParametrizedCone}
Let $\tilde{p}: \R^d \to \R$ be a smooth function with $\tilde{p}(0) = 0$ and $\nabla \tilde{p}(0)=0$,
$\partial^2 \tilde{p} = \text{diag}(-1,\ldots,-1,1)$. Then, there is $c>0$ such that  
\begin{equation}
\label{eq:Factorization}
\tilde{p}(\xi) = (\xi_d - |\xi'|+r_1(\xi'))(\xi_d + |\xi'|+r_2(\xi')) m(\xi) \text{ for } \xi = (\xi',\xi_d)\in B(0,c)
\end{equation}
with $m \in C^\infty(\R^d \backslash \{0\})$, $|m| \gtrsim 1$, $|\partial^\alpha m(\xi)| \lesssim_\alpha
|\xi|^{-|\alpha|}$ for $\alpha \in \mathbb{N}_0^d$ and $r_i \in C^\infty(\R^{d-1} \backslash \{0\})$, 
$|\partial^\alpha r_i(\xi')| \lesssim |\xi'|^{2-|\alpha|}$. 
\end{proposition}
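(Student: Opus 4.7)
My plan is to invoke the smooth ($C^\infty$) preparation theorem of Malgrange to factor $\tilde p$ into a Weierstrass polynomial of degree two in $\xi_d$ times a smooth unit, and then to factor the quadratic polynomial by the quadratic formula, isolating the singular behaviour of the roots in the correction terms $r_1, r_2$.

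First, I would observe that $\tilde p(0,\xi_d) = \xi_d^2 + O(\xi_d^3)$ has a zero of order exactly two at $\xi_d=0$. Malgrange's preparation theorem therefore yields $c>0$, smooth coefficients $a_0,a_1\in C^\infty(B_{\R^{d-1}}(0,c))$, and a smooth function $m\in C^\infty(B(0,c))$ with $m(0)\neq 0$ such that
\begin{equation*}
  \tilde p(\xi) = \bigl(\xi_d^2 + a_1(\xi')\xi_d + a_0(\xi')\bigr)\,m(\xi) \qquad (\xi\in B(0,c)).
\end{equation*}
After normalising $m(0)=1$, comparison of the Taylor expansions at the origin using $\tilde p(\xi) = \xi_d^2-|\xi'|^2+O(|\xi|^3)$ forces
$a_0(0)=0,\nabla a_0(0)=0, \tfrac12 D^2 a_0(0) = -I_{d-1}$ and $a_1(0)=0,\nabla a_1(0)=0$, so that $a_1(\xi') = O(|\xi'|^2)$ and $a_0(\xi')= -|\xi'|^2 + O(|\xi'|^3)$. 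Since $m$ is smooth on the whole ball $B(0,c)$, it is in particular smooth and bounded away from zero on $B(0,c)\setminus\{0\}$, and the crude bound $|\partial^\alpha m(\xi)|\lesssim_\alpha 1\lesssim |\xi|^{-|\alpha|}$ holds trivially on the bounded neighbourhood.

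Next, I would solve the quadratic in $\xi_d$. Its discriminant $\Delta(\xi') := a_1(\xi')^2 - 4a_0(\xi') = 4|\xi'|^2 + O(|\xi'|^3)$ is strictly positive on a punctured neighbourhood of the origin, so that the two roots
\begin{equation*}
  h_\pm(\xi') = \tfrac12\bigl(-a_1(\xi') \pm \sqrt{\Delta(\xi')}\bigr)
\end{equation*}
are well-defined and $C^\infty$ on $\R^{d-1}\setminus\{0\}$ (small), with $h_\pm(\xi') = \pm|\xi'| + O(|\xi'|^2)$. Setting $r_1(\xi') := |\xi'|-h_+(\xi')$ and $r_2(\xi') := -|\xi'|-h_-(\xi')$, a short computation using Vieta's formulas (so that $h_++h_-=-a_1$ and $h_+h_-=a_0$) shows
\begin{equation*}
  (\xi_d - |\xi'|+r_1(\xi'))(\xi_d + |\xi'|+r_2(\xi'))
  = \xi_d^2 + a_1(\xi')\xi_d + a_0(\xi'),
\end{equation*}
yielding the claimed factorisation \eqref{eq:Factorization}.

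The main technical obstacle is the uniform derivative estimate $|\partial^\alpha r_i(\xi')|\lesssim|\xi'|^{2-|\alpha|}$. The point is that $\sqrt{\Delta(\xi')}$ is not smooth at the origin, and each derivative worsens its behaviour there; but since $\Delta(\xi') = 4|\xi'|^2 + b(\xi')$ with smooth $b$ satisfying $b=O(|\xi'|^3)$, one has $\sqrt{\Delta(\xi')} = 2|\xi'|\sqrt{1 + b(\xi')/(4|\xi'|^2)}$. I would handle this by a dyadic rescaling argument: set $\xi' = \lambda\eta$ with $|\eta|=1$ and $\lambda>0$ small, and observe that $\lambda^{-2}\Delta(\lambda\eta) \to 4$ uniformly in $\eta$, with smooth dependence away from $\lambda=0$. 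Applying $\partial^\alpha_{\xi'}$, chain-ruling in $(\lambda,\eta)$, and using that $a_1(\xi')$ vanishes to order two while $a_0(\xi')+|\xi'|^2$ vanishes to order three, the derivatives of $r_i$ inherit the claimed $|\xi'|^{2-|\alpha|}$ decay. This completes the proof.
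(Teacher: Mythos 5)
Your argument is correct but takes a genuinely different route from the paper's. The paper works directly from the Taylor expansion $\tilde p(\xi)=\xi_d^2-|\xi'|^2+g(\xi)$: it determines the sign of $\tilde p$ in the regions $|\xi_d|\leq|\xi'|/2$ and $|\xi_d|\geq2|\xi'|$, shows $|\partial_d\tilde p|\gtrsim|\xi_d|$ on the intermediate cone $|\xi_d|\sim|\xi'|$, and then uses monotonicity with the implicit function theorem to produce the two smooth branches $\xi_d=\psi_1(\xi')$, $\xi_d=-\psi_2(\xi')$; the unit $m$ is built by hand via the Hadamard-type identity involving $\int_0^1\partial_d\tilde p(\xi',\psi_1(\xi')+t(\xi_d-\psi_1(\xi')))\,dt$, and its bounds are checked region by region. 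You instead invoke Malgrange's $C^\infty$ preparation theorem up front to obtain $\tilde p=(\xi_d^2+a_1(\xi')\xi_d+a_0(\xi'))\,m(\xi)$ with a smooth nonvanishing unit $m$ and smooth Weierstrass coefficients, after which the two branches fall out of the quadratic formula. Your route makes the smoothness and the bounds on $m$ completely trivial (in fact $m$ is smooth even through the origin), at the cost of relying on a deeper structural theorem. In either approach the substantive work is the estimate $|\partial^\alpha r_i(\xi')|\lesssim|\xi'|^{2-|\alpha|}$, and both proofs give only sketches there. Your rescaling sketch is sound; to make it airtight one would record that $a_1(\lambda\zeta)=\lambda^2\tilde a_1(\lambda,\zeta)$ and $\Delta(\lambda\zeta)-4\lambda^2|\zeta|^2=\lambda^3\tilde b(\lambda,\zeta)$ for smooth $\tilde a_1,\tilde b$ (Taylor with integral remainder applied to $a_1$ and to $a_0+|\xi'|^2$), and then use the algebraic identity $\sqrt{\Delta}-2|\xi'|=(\Delta-4|\xi'|^2)/(\sqrt{\Delta}+2|\xi'|)$ to conclude $r_1(\lambda\zeta)=\lambda^2R_1(\lambda,\zeta)$ with $R_1$ smooth up to $\lambda=0$ and uniformly bounded in every $C^N$ on $\{|\zeta|\sim1\}$; undoing the scaling $\xi'=\lambda\zeta$ then yields the derivative bounds. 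The minor imprecision in constraining $|\eta|=1$ while ``chain-ruling in $(\lambda,\eta)$'' is avoided by working with $\zeta$ ranging over an annulus rather than the sphere.
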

\begin{proof}
From the Taylor expansion we get $\tilde p(\xi)=\xi_d^2-|\xi'|^2+g(\xi)$ with 
$|\partial^\alpha g(\xi)| \leq C_\alpha |\xi|^{3-|\alpha|}$ for
$\alpha\in\N_0^d$.  Choose $c:=\min\{\frac{1}{10C_0},\frac{1}{5C_1}\}$  and we consider $|\xi|\leq c$ from
now on.
In the first step, we find zeros for fixed $\xi'$ by monotonicity with respect to $\xi_d$. 
For $|\xi_d| \leq \frac{|\xi'|}{2}$, we find $\xi_d^2 - |\xi'|^2 \leq -\frac{3|\xi'|^2}{4} \leq
-\frac{3|\xi|^2}{5}$ and hence  $\tilde{p}(\xi) \leq -\frac{|\xi|^2}{2}$. 
In the latter estimate we used $|\xi|\leq c\leq \frac{1}{10C_0}$. 
Similarly, we find 
$\xi_d^2 - |\xi'|^2 \geq |\xi'|^2 \geq \frac{3|\xi|^2}{5}$ and hence 
$\tilde{p}(\xi)
\geq \frac{|\xi|^2}{2}$ whenever $|\xi_d| \geq 2|\xi'|$. Both inequalities together imply that for $|\xi|\leq
c$ we have $\tilde{p}(0,\xi_d)=0$ if and only if $\xi_d=0$ as well as 
\begin{equation*}
\tilde{p}(\xi',z_1) < 0 < \tilde{p}(\xi',z_2) \quad\text{if }\xi'\neq 0,\;|z_1|\leq \frac{|\xi'|}{2},\;
|z_2|\geq 2|\xi'|.
\end{equation*} 
Furthermore, 
$$
   |\partial_d\tilde{p}(\xi',\xi_d)| 
   \geq 2|\xi_d|  - C_1|\xi|^2
   \geq |\xi_d|
   \qquad\text{if } \frac{|\xi'|}{2}\leq |\xi_d|\leq 2|\xi'|.
$$ 
In the last inequality we used $|\xi_d|\leq |\xi|\leq c\leq \frac{1}{5C_1}$.
Hence, by strict monotonicity, all solutions of $\tilde p(\xi)=0$ in $B_c(0)$ are given by
$\xi_d=\psi_1(\xi')$ or $\xi_d=-\psi_2(\xi')$ for positive functions $\psi_1,\psi_2$ that, by the implicit function theorem, are
even smooth away from the origin. Taking the gradients on each part of the equations $\tilde
p(\xi',\psi_1(\xi'))=0$ and $\tilde p(\xi',-\psi_2(\xi'))=0$  we find the claimed properties
\begin{equation*}
  \psi_1(\xi')=|\xi'|-r_1(\xi'),\quad
  \psi_2(\xi')=|\xi'|+r_2(\xi')\quad\text{with }
| \partial^\alpha r_i(\xi')| \lesssim_\alpha |\xi'|^{2-|\alpha|}.
\end{equation*}
 It remains to check the validity of \eqref{eq:Factorization}. This is straight-forward for
 $|\xi_d|\leq \frac{|\xi'|}{2}$ or  $|\xi_d|\geq 2|\xi'|$ where the factor 
  $(\xi_d - |\xi'|+r_1(\xi'))(\xi_d + |\xi'|+r_2(\xi'))$ does not vanish.   In the case $\frac{|\xi'|}{2}\leq
  |\xi_d| \leq  2|\xi'|$ and $\xi_d>0$ we obtain by the same arguments as in Subsection
  \ref{subsection:ParametricRepresentation}  
\begin{equation*}
\frac{\tilde{p}(\xi)}{(\xi_d - \psi_1(\xi'))(\xi_d + \psi_2(\xi'))} 
= \frac{\int_0^1 \partial_d \tilde{p}(\xi',\psi_1(\xi') + t(\xi_d -\psi_1(\xi')) dt}{\xi_d + \psi_2(\xi')}
\end{equation*}
 with $\int_0^1 \partial_d \tilde{p}(\xi',\psi_1(\xi') + t(\xi_d
-\psi_1(\xi'))\,dt = O(|\xi_d|)$, and the claim follows by $|\xi_d - \psi_2(\xi')| \gtrsim |\xi|$. The claim
for the derivatives follows from the above display by induction.
 The case $\xi_d < 0$ is treated analogously. 
\end{proof}

\subsection{Estimates for perturbed cone multiplier}
\label{subsection:PerturbedConeMultiplier}
With $(\mathfrak{m}_\alpha f) \widehat (\xi) = m^{-\alpha}(\xi) \hat{f}(\xi)$ a Fourier multiplier in $L^p(\R^d)$ for $1 < p < \infty$ by Mikhlin's theorem, the above parametric representation suggests to analyze the generalized cone multiplier
\begin{equation*}
\label{eq:ConeMultiplier}
(\mathcal C^\alpha f) \widehat (\xi) = \frac{1}{\Gamma(1-\alpha)} \frac{\beta(\xi) \hat{f}(\xi)}{((\xi_d -
|\xi'| + r_1(\xi'))(\xi_d + |\xi'| + r_2(\xi'))^{\alpha}_+},
\end{equation*}
which is again defined by analytic continuation for $\alpha \geq 1$. As provided in
Subsection~\ref{subsection:ParametricRepresentationSingularPoints} for singular non-degenerate points, we suppose that
\begin{equation*}
r_i \in C^\infty(\R^{d-1} \backslash \{0\}), \quad |\partial^\alpha r_i(\xi')| \lesssim_\alpha
|\xi'|^{2-|\alpha|} \qquad (i=1,2, \alpha\in\N_0^d)
\end{equation*}
  and $\beta \in C^\infty_c(B(0,c))$ satisfies $\beta(\xi) = 1$ for $|\xi| \leq \frac{c}{2}$ for $c$ as in
  Proposition~\ref{prop:ParametrizedCone}.  
   We suppose that $c=1$ to lighten the notation.
The aim of this section is to show that $\mathcal C^\alpha: L^p(\R^d) \to L^q(\R^d)$ is bounded for exponents
$p,q$ as described below.
To explain $\mathcal C^{\alpha}$ \emph{a priori} in the distributional sense, we suppose that $f \in
\mathcal{S}$ with $0 \notin \text{supp} (\hat{f})$. As we prove estimates independent of the Fourier support, $\mathcal C^\alpha$ extends by density.
%

\begin{proposition}
\label{prop:GeneralizedConeMultiplier}
Let $1/2 < \alpha < d/2$. Then $\mathcal C^\alpha$ has the same mapping properties as the Bochner-Riesz
operator $T^\alpha$ from Theorem~\ref{thm:GeneralizedBochnerRieszEstimates}~(i) for $k=d-2$.
\end{proposition}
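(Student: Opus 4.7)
The plan is to combine a dyadic Littlewood--Paley decomposition in Fourier space with a rescaling argument that reduces the analysis to Theorem~\ref{thm:GeneralizedBochnerRieszEstimates}~(i) applied at unit scale with $k=d-2$. The crucial observation is that the two perturbed half-cones $\{\xi_d\mp|\xi'|\pm r_i(\xi')=0\}$ arising from the factorization~\eqref{eq:Factorization} are smooth regular hypersurfaces carrying exactly $d-2$ non-vanishing principal curvatures away from the apex, which matches the hypothesis of Theorem~\ref{thm:GeneralizedBochnerRieszEstimates}~(i) for $k=d-2$.

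First I would fix $\phi\in C^\infty_c(\R^d\sm\{0\})$ supported in $\{|\xi|\sim 1\}$ with $\sum_{j\geq 0}\phi(2^j\xi)=1$ on $\supp(\beta)\sm\{0\}$ and write $\mathcal C^\alpha=\sum_{j\geq 0}\mathcal C^\alpha_j$ with each piece Fourier localized to $|\xi|\sim 2^{-j}$. In a neighbourhood of the sheet $\xi_d=\epsilon_i|\xi'|-r_i(\xi')$, where $\epsilon_i\in\{+1,-1\}$, the complementary factor $\xi_d+\epsilon_i|\xi'|+r_{3-i}(\xi')$ is comparable to $|\xi|\sim 2^{-j}$ and defines a smooth Mikhlin-type symbol of order one. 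Using a smooth partition of unity separating the two sheets, I would absorb this non-vanishing factor and reduce each piece to the model form
\begin{equation*}
(\mathcal C^\alpha_{j,i} f)\widehat{\,}(\xi)=\frac{2^{j\alpha}\,\tilde a(2^j\xi)}{(\xi_d-\epsilon_i|\xi'|+r_i(\xi'))^\alpha_+}\,\hat f(\xi),
\end{equation*}
with $\tilde a\in C_c^\infty(\{|\eta|\sim 1\})$ satisfying $C^N$-bounds uniform in $j$. Rescaling $\xi=2^{-j}\eta$ and setting $\tilde r_i(\eta'):=2^j r_i(2^{-j}\eta')$, the hypothesis $|\partial^\gamma r_i(\xi')|\lesssim_\gamma|\xi'|^{2-|\gamma|}$ gives $\|\tilde r_i\|_{C^N}\lesssim 2^{-j}$, so the rescaled surface $\{\eta_d=\epsilon_i|\eta'|-\tilde r_i(\eta')\}$ is a small $C^N$-perturbation of the unit half-cone and therefore carries $d-2$ non-vanishing principal curvatures on the relevant compact piece. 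The stability claim in Theorem~\ref{thm:GeneralizedBochnerRieszEstimates}~(i) for $k=d-2$ then produces the full range of Bochner--Riesz weak and strong bounds for the rescaled operator uniformly in $j$.

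Tracking scaling factors yields
\begin{equation*}
\|\mathcal C^\alpha_{j,i}f\|_{L^q(\R^d)}\lesssim 2^{-j(d(1/p-1/q)-2\alpha)}\,\|f\|_{L^p(\R^d)}\quad\text{for }(1/p,1/q)\in\mathcal P_\alpha(d-2),
\end{equation*}
and analogous Lorentz bounds on the pentagon's boundary. Throughout the interior of $\mathcal P_\alpha(d-2)$ the exponent $d(1/p-1/q)-2\alpha$ is strictly positive, so the geometric series in $j\geq 0$ converges; combined with uniform $L^p$-boundedness of the Littlewood--Paley projections, this delivers the strong $L^p\to L^q$ estimates. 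The main obstacle is the summation on the closed segment $[B_{\alpha,d-2},B'_{\alpha,d-2}]$, on which $d(1/p-1/q)=2\alpha$ and direct summation fails. On this critical line I would invoke Lemma~\ref{lem:SummationLemma} exactly as in the proof of Theorem~\ref{thm:GeneralizedBochnerRieszEstimates}~(i): the dyadic pieces $\mathcal C^\alpha_{j,i}$ admit rescaled restriction and $L^1$-$L^\infty$ kernel estimates with opposite growth/decay in $j$, and Bourgain's summation interpolates these to produce the restricted weak type bounds at $B_{\alpha,d-2}$ and $B'_{\alpha,d-2}$. The remaining weak type estimates on $(B_{\alpha,d-2},C_{\alpha,d-2}]$ and $(B'_{\alpha,d-2},C'_{\alpha,d-2}]$ then follow from the same interpolation scheme with different choices of endpoints, in perfect parallel to the proof of Theorem~\ref{thm:GeneralizedBochnerRieszEstimates}~(i).
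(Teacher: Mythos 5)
Your overall architecture---a dyadic Littlewood--Paley decomposition of frequency annuli, rescaling each piece to unit scale, absorbing the non-vanishing complementary factor via Mikhlin, and invoking the stability claim of Theorem~\ref{thm:GeneralizedBochnerRieszEstimates}~(i) with $k=d-2$ for the rescaled surfaces converging to the unit half-cone---matches the paper's proof, and your scaling computation producing the factor $2^{-j(d(1/p-1/q)-2\alpha)}$ is correct. Direct geometric summation does work wherever this exponent is strictly positive.

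The gap is in the summation on the critical segment $[B_{\alpha,d-2},B'_{\alpha,d-2}]$, where $d(1/p-1/q)=2\alpha$ exactly and the per-scale factor is $1$. You propose to invoke Lemma~\ref{lem:SummationLemma} "exactly as in the proof of Theorem~\ref{thm:GeneralizedBochnerRieszEstimates}~(i)", asserting that the dyadic pieces admit estimates with opposite growth/decay in $j$. This is not the case: the scaling exponent $2\alpha-d(1/p-1/q)$ is $\leq 0$ for every pair $(1/p,1/q)$ at which the rescaled unit-scale operator is known to be bounded---namely whenever $1/p-1/q\geq \tfrac{2\alpha}{d}$---and it vanishes precisely on $[B,B']$. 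A positive $j$-exponent would force $1/p_1-1/q_1<\tfrac{2\alpha}{d}$, which lies outside the mapping range of Theorem~\ref{thm:GeneralizedBochnerRieszEstimates}, so no unit-scale bound is available to rescale. (Bernstein using the frequency localization does not rescue this: it moves exponents toward larger $q$ and smaller $p$, keeping the exponent $\leq 0$.) You appear to be conflating the Littlewood--Paley index $j$ with the distance-to-surface index that appears inside the proof of Theorem~\ref{thm:GeneralizedBochnerRieszEstimates}: the latter genuinely runs over $\Z$ and produces both signs, whereas the LP pieces here carry only nonpositive exponents, so Lemma~\ref{lem:SummationLemma}, which requires $\varepsilon_1,\varepsilon_2>0$, cannot be applied. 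The paper resolves the critical segment via Lemma~\ref{lem:LittlewoodPaley}: the mutual Fourier orthogonality of the dyadic pieces is exploited through a Lorentz-space square-function estimate (valid since $1<p<2<q<\infty$ holds automatically throughout the pentagon), turning uniform single-scale bounds into a bound for the full sum. This square-function summation, not Bourgain's summation, is the missing ingredient in your argument.
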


The proposition generalizes Lee's result~\cite[Theorem~1.1]{Lee2003} for $\alpha > 1/2$: Fourier supports and
perturbations of the cone including the singular point are covered and the space dimension is not restricted
to $d=3$. As we obtain the same conditions on $(p,q)$ as Lee, which he showed to be sharp in the case $d=3$,
the conditions in Proposition~\ref{prop:GeneralizedConeMultiplier} are clearly sharp. It seems likely that by
bilinear restriction the result can be improved as in~\cite{Lee2003} for $\alpha < \frac{1}{2}$.

To reduce the estimates to Theorem \ref{thm:GeneralizedBochnerRieszEstimates}, we apply a Littlewood-Paley decomposition. Let 
$\beta_l(\xi) = \beta_0(2^l \xi)$ with $\text{supp} (\beta_0) \subseteq B(0,2) \backslash B(0,1/2)$ and
\begin{equation*}
\sum_{l \geq 0} \beta_l \cdot \beta = \beta.
\end{equation*}
We define
\begin{equation*}
(\mathcal C_l^{\alpha} f) \widehat(\xi)= \frac{1}{\Gamma(1-\alpha)}\frac{ \beta_l(\xi) \beta(\xi) 
\hat{f}(\xi)}{((\xi_d - |\xi'| + r_1(\xi'))(\xi_d + |\xi'| + r_2(\xi'))^{\alpha}_+}.
\end{equation*}

We have the following consequence of Littlewood-Paley theory:
\begin{lemma}
\label{lem:LittlewoodPaley}
Assume that there are $1< p < 2< q < \infty$ and $r_1 \in \{1,p \}$ and $r_2 \in \{q,\infty\}$ such that 
\begin{equation*}
\label{eq:DyadicEstimate}
\| C_l^\alpha f \|_{L^{q,r_2}(\R^d)} \leq C \| f \|_{L^{p,r_1}(\R^d)}
\end{equation*}
holds for all $l\in\N_0$. Then 
\begin{equation*}
\| \mathcal C^\alpha f \|_{L^{q,r_2}(\R^d)} \lesssim C \| f \|_{L^{p,r_1}(\R^d)}.
\end{equation*}
\end{lemma}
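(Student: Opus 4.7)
The plan is to apply Littlewood--Paley theory in a fairly standard way, exploiting that each $\mathcal{C}_l^\alpha$ has its Fourier symbol supported in the dyadic annulus $\{|\xi| \sim 2^{-l}\}$ (inherited from $\beta_l$). In particular, if $\tilde P_l$ denotes a slightly fattened smooth frequency projector onto this annulus, then $\mathcal{C}_l^\alpha f = \mathcal{C}_l^\alpha \tilde P_l f$, so both the input and the output of $\mathcal{C}_l^\alpha$ are frequency-localized at the same dyadic scale. The assumption $1 < p < 2 < q < \infty$ is precisely what makes the resulting dyadic summation succeed through Minkowski-type inequalities applied to the $\ell^2$ square function.

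For the model case $r_1 = p$, $r_2 = q$, the argument proceeds as follows. Writing $\mathcal{C}^\alpha f = \sum_l \mathcal{C}_l^\alpha f$ and noting that the summands are frequency-localized at distinct dyadic scales, the Littlewood--Paley theorem yields
\[
  \|\mathcal{C}^\alpha f\|_{L^q(\R^d)}
  \lesssim \left\|\bigl(\textstyle\sum_l |\mathcal{C}_l^\alpha f|^2\bigr)^{1/2}\right\|_{L^q(\R^d)}.
\]
Since $q \geq 2$, Minkowski's inequality in $L^{q/2}$ gives the further bound $\bigl(\sum_l \|\mathcal{C}_l^\alpha \tilde P_l f\|_{L^q}^2\bigr)^{1/2}$, to which the hypothesis applies, producing $C \bigl(\sum_l \|\tilde P_l f\|_{L^p}^2\bigr)^{1/2}$. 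The reverse Minkowski inequality (using $p \leq 2$) combined with the Littlewood--Paley characterization of $L^p$ yields
\[
  \bigl(\textstyle\sum_l \|\tilde P_l f\|_{L^p}^2\bigr)^{1/2}
  \leq \left\|\bigl(\textstyle\sum_l |\tilde P_l f|^2\bigr)^{1/2}\right\|_{L^p}
  \lesssim \|f\|_{L^p},
\]
and chaining these three steps produces the desired estimate.

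For the Lorentz endpoints $(r_1, r_2) \in \{1,p\} \times \{q,\infty\}$ with $(r_1, r_2) \neq (p,q)$, the same scheme goes through once the Lorentz analogues of the Littlewood--Paley square-function inequality (valid on $L^{p,r}$ for $1 < p < \infty$, $1 \leq r \leq \infty$ by real interpolation from the classical $L^p$ statement) and of the Minkowski inequality for the $\ell^2$ square function on $L^{q, r_2}$ are invoked. A more painless route is to establish the strong-type bound above for an open neighbourhood of $(p, q)$ — which is possible thanks to the strict inequalities $p < 2 < q$ — and then recover the Lorentz endpoints by real interpolation.

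The main technical obstacle is the validity of the square-function machinery at the Lorentz endpoints $L^{p,1}$ and $L^{q,\infty}$, since these are only quasi-normed and the classical Khintchine/Calder\'on--Zygmund arguments require some adaptation. The interpolation detour in the preceding paragraph sidesteps this issue entirely, so no fundamentally new ideas are needed beyond the standard Littlewood--Paley machinery.
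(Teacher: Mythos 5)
Your primary route — square function estimate, Minkowski in $L^{q/2}$ using $q>2$, apply the dyadic hypothesis after fattening the frequency projector, then reverse-Minkowski plus Littlewood--Paley in $L^{p}$ using $p<2$ — is exactly the chain the paper runs, and for the Lorentz endpoints the paper likewise invokes a Lorentz-space square function estimate (citing [JeongKwonLee2016, Lemma 3.2]) together with the observation that $L^{q/2,\infty}$ is normable. One refinement worth noting: for the final step $\big(\sum_l \|\tilde P_l f\|_{L^{p,r_1}}^2\big)^{1/2} \lesssim \|f\|_{L^{p,r_1}}$, the paper does not attempt a direct reverse Minkowski inequality in $L^{p,1}$ (which would require superadditivity of the quasi-norm $L^{p/2,1/2}$ and is delicate); instead it observes that this estimate is \emph{dual} to the first chain, where the exponents are above $2$ and everything is a genuine normed-space Minkowski argument. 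Your wording ``reverse Minkowski combined with the Littlewood--Paley characterization'' leaves that gap open at the Lorentz endpoint.

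The ``more painless route'' via interpolation, however, does not work and should be discarded. The lemma's hypothesis supplies a bound for $\mathcal{C}_l^\alpha$ at a \emph{single} tuple $(p,q,r_1,r_2)$; there is no second estimate to interpolate against, and in particular you cannot bootstrap a dyadic restricted weak-type bound at one point into strong dyadic bounds on an open neighbourhood of $(p,q)$ (real interpolation at a single point gives you nothing new). If you instead mean to interpolate the conclusion for the \emph{full} operator, you would already need the conclusion of the lemma at nearby exponents, which is circular. So the Lorentz-space square-function argument is not an avoidable technicality here; it, or the dual formulation the paper uses, is genuinely needed.
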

\begin{proof}
We write by the square function estimate, which also holds in Lorentz spaces, see, e.g. \cite[Lemma~3.2]{JeongKwonLee2016}, and Minkowski's inequality (note that $L^{\frac{q}{2},\infty}$ is normable because $q>2$)
\begin{align*}
\| \mathcal C^\alpha f \|_{L^{q,r_2}(\R^d)} 
\lesssim \big\| \big( \sum_{l \geq 0} |\mathcal C^\alpha_l f |^2 \big)^{\frac{1}{2}} \big\|_{L^{q,r_2}(\R^d)} 
\lesssim \big( \sum_{l \geq 0} \| \mathcal C_l^\alpha f \|_{L^{q,r_2}(\R^d)}^2 \big)^{\frac{1}{2}}.
\end{align*}
By hypothesis and noting that $\mathcal C_l^\alpha f = \mathcal C_l^\alpha \big( \sum_{|l'-l| \leq 2}
\beta_{l'}(D) f \big)$, we find
\begin{align*}
\big( \sum_{l \geq 0} \| \mathcal C_l^\alpha f \|^2_{L^{q,r_2}(\R^d)} \big)^{\frac{1}{2}} 
\lesssim \big( \sum_{l' \geq 0} \| \beta_{l'}(D) f \|^2_{L^{p,r_1}(\R^d)} \big)^{\frac{1}{2}} 
\lesssim  \| f \|_{L^{p,r_1}(\R^d)}.
\end{align*}
Notice that the ultimate estimate is dual to the previous display.
\end{proof}
We are ready for the proof of Proposition \ref{prop:GeneralizedConeMultiplier}.
\begin{proof}[Proof of Proposition \ref{prop:GeneralizedConeMultiplier}]
We use scaling to reduce to unit frequencies:
\begin{align*}
\mathcal C_l^\alpha f(x) 
&= \frac{1}{\Gamma(1-\alpha)} \int_{\R^d}\frac{e^{ix.\xi} \beta_l(\xi) \hat{f}(\xi)}{((\xi_d -|\xi'| +
r_1(\xi'))(\xi_d+|\xi'| +r_2(\xi'))^\alpha_+} \,d\xi \\
&= \frac{2^{-dl}}{\Gamma(1-\alpha)} \int_{\R^d}\frac{e^{ix.2^{-l} \zeta} \beta_0(\zeta) \hat{f}(2^{-l}
\zeta)}{((2^{-l} \zeta_d - 2^{-l} |\zeta'| + r_1(2^{-l}(\zeta'))(2^{-l} \zeta_d + 2^{-l} |\zeta'| + r_2(2^{-l} \zeta'))^\alpha_+} d\zeta \\
&= \frac{2^{2\alpha l - dl}}{\Gamma(1-\alpha)} \int_{\R^d}\frac{e^{i 2^{-l} x.\zeta} \beta_0(\zeta) \hat{f}_l(\zeta)}{((\zeta_d - |\zeta'| + r_{1,l}(\zeta'))(\zeta_d + |\zeta'| + r_{2,l}(\zeta'))^\alpha_+} d\zeta,
\end{align*}
where $\hat{f}_l(\zeta) = \hat{f}(2^{-l} \zeta)$, $r_{i,l}(\zeta') = 2^{l} r_{i}(2^{-l} \zeta')$, $\xi = 2^{-l}
\zeta$. We therefore consider the operator
\begin{equation*}
S_l^\alpha g(y) = \frac{1}{\Gamma(1-\alpha)} \int_{\R^d}\frac{e^{iy.\xi} \beta_0(\xi) \hat{g}(\xi)}{((\xi_d - |\xi'|
+ r_{1,l}(\xi'))(\xi_d + |\xi'|  + r_{2,l}(\xi'))^\alpha_+} d\xi.
\end{equation*}
With $\text{supp} (\beta_0) \subseteq B(0,2) \backslash B(0,1/2)$, the subsets of $\text{supp}(\beta_0)$ where
the factors $\xi_d -|\xi'| + r_{1,l}(\xi')$ and $\xi_d + |\xi'| + r_{2,l}(\xi')$ vanish are separated.   We write
\begin{equation*}
\beta_0(\xi) = \beta_0(\xi) ( \gamma_0(\xi) + \gamma_1(\xi) + \gamma_2(\xi))
\end{equation*}
 with $\gamma_i \in C^\infty_c(\R^d)$ and $\text{supp}(\gamma_0) \subseteq \{ \xi \in \R^d : |\xi_d| \not\sim
 |\xi'| \}$, $\text{supp} (\gamma_i) \subseteq \{ \xi \in \R^d : (-1)^{i+1} \xi_d \sim |\xi'| \}$ for $i=1,2$. Correspondingly, we consider the operators $S_{l,i}^\alpha$ with
\begin{equation*}
\big( S^\alpha_{l,i} h\big) \widehat (\xi) = \frac{1}{\Gamma(1-\alpha)} \frac{\gamma_i(\xi) \beta_0(\xi)
\hat{h}(\xi)}{((\xi_d -|\xi'| + r_{1,l}(\xi'))(\xi_d+|\xi'| + r_{2,l}(\xi'))^\alpha_+}.
\end{equation*}
Clearly, $S_{l,0}^\alpha$ is bounded from $L^p(\R^d) \to L^q(\R^d)$ for $1 \leq p \leq q \leq \infty$ as the kernel is a Schwartz function. We shall only estimate $S_{l,1}^\alpha$ as $S_{l,2}^\alpha$ is treated \emph{mutatis mutandis}:
\begin{equation*}
(S_{l,1}^\alpha h) \widehat (\xi) 
= \frac{1}{\Gamma(1-\alpha)} \frac{\gamma_1(\xi) \beta_0(\xi) \hat{h}(\xi)}{((\xi_d - |\xi'| + r_{1,l}(\xi'))(\xi_d + |\xi'| + r_{2,l}(\xi'))^\alpha_+}.
\end{equation*}
With $m(\xi) = \xi_d + |\xi'| + r_{2,l}(\xi') \gtrsim \xi_d$  for $\xi \in \text{supp} (\beta_1) \cap \text{supp} (\beta)$ and $|\partial^\alpha m(\xi)| \lesssim 1$, by Young's inequality it is enough to consider $\tilde{S}^\alpha_{l,1}$ given by
\begin{equation*}
( \tilde{S}^\alpha_{l,1} g) \widehat (\xi) = \frac{1}{\Gamma(1-\alpha)} \frac{\gamma_1(\xi) \beta_0(\xi) \hat{g}(\xi)}{(\xi_d - |\xi'| + r_{1,l}(\xi'))^\alpha_+}.
\end{equation*}
To this operator, we can apply the estimates of Theorem~\ref{thm:GeneralizedBochnerRieszEstimates} for $k=d-2$
since in each point of the perturbed cone $d-2$ principal curvatures are bounded from below in modulus
uniformly with respect to $k$. Moreover, the rescaled surfaces $\{ \zeta_d = \mp |\zeta'| + r_{i,l}(\zeta') \}$ can be approximated with the cone in any $C^N$-norm. As a consequence, $S_k^\alpha$ has the mapping properties
described in Theorem~\ref{thm:GeneralizedBochnerRieszEstimates}~(i) for $\frac{1}{2}<\alpha<\frac{d}{2}$ with
a uniform mapping constant. From
\begin{equation*}
\mathcal C_l^\alpha f(x) = 2^{2 \alpha l - 3l} S_l^\alpha f_l(2^{-l} x)
\end{equation*}
we   conclude
\begin{align*}
\| \mathcal C_l^\alpha f \|_{L^q(\R^d)} 
&= 2^{2 \alpha l - dl} \| (S_l^\alpha f_l)(2^{-l} \cdot) \|_{L^q(\R^d)} \\
&\lesssim 2^{2 \alpha l - dl} 2^{\frac{dl}{q}} \| S_l^\alpha f_l \|_{L^q(\R^d)} \\
&\lesssim 2^{2 \alpha l -dl} 2^{\frac{dl}{q}} \| f_l \|_{L^p(\R^d)} \\
&= 2^{2\alpha l + \frac{dl}{q} -\frac{dl}{p}} \| f \|_{L^p(\R^d)}.
\end{align*}
Given that the conditions on $p,q$ imply $2 \alpha + \frac{d}{q} - \frac{d}{p}\leq 0$, we obtain the desired
uniform estimates for any fixed $l\in\N_0$. 
Hence, an application of Lemma \ref{lem:LittlewoodPaley} finishes the proof for $p \neq 1$, $q \neq \infty$
because of $p<2<q$. If $p>1,q= \infty$, we can find $q^*<\infty$ such that the conditions hold for
$(p,q^*)$. This is true because $\frac{1}{p}-\frac{1}{q}= 1-\frac{1}{q*} >\frac{2\alpha}{d}$ for large enough
$q^*$. Take $\chi$ a cut-off function with
$\chi=1$ on $\text{supp}(\beta)$. Then, by bounded frequencies and Young's inequality,
\begin{align*}
\| \mathcal C^\alpha f \|_{L^{\infty}(\R^d)} 
&\les \| \langle \mathcal C^\alpha f \|_{L^{q*}(\R^d)} \\
&=\| \mathcal C^\alpha ( \chi(D)f) \|_{L^{q*}(\R^d)}   \\
&\les\|  \chi(D) f \|_{L^p(\R^d)} \\
&\lesssim \|f \|_{L^p(\R^d)}
\end{align*}
The case $p=1,q<\infty$ is dual and thus proved as well. The proof is complete.

\end{proof}

\subsection{Estimates for approximate solutions close to the singular points}
\label{subsection:ApproximateSolutionsSingularPoints}
In this section we prove Proposition \ref{prop:SingularEstimate} by showing the corresponding $L^p$-$L^q$-bounds for
\begin{equation*}
\label{eq:MultiplierSingularPoints}
A_\delta f(x) = \int_{\R^3} \frac{e^{ix.\xi} \beta(\xi)}{\tilde p(\xi) + i\delta} \hat{f}(\xi)\,d\xi, 
\end{equation*}
where $\tilde p$, after some translation and dilation, has the form
\begin{equation}
\label{eq:ApproximateCone}
  \tilde p(\xi) = \xi_3^2 - \xi_1^2 - \xi_2^2 + g(\xi) \text{ with } |\partial^\alpha g(\xi)| \leq C_\alpha  |\xi|^{3-|\alpha|}
\end{equation} 
 and  $\text{supp}(\beta)\subset B(0,c)$ with $c$ as in Proposition~\ref{prop:ParametrizedCone}. Roughly
 speaking, this guarantees that the surface $\{ \tilde p(\xi) = 0 \}$ looks like a cone in $B(0,c)$. Due to
 the singularity at the origin, this seems problematic, but can be remedied by Littlewood-Paley
 decomposition.

We proceed similar as above.
  Let $\beta_0 \in C^\infty_c(\R^3)$ with $\text{supp }(\beta_0) \subseteq \{ c/2 \leq |\xi| \leq 2c \}$ and $\beta_{\ell}(\xi) = \beta_0(2^{\ell} \xi)$, $\ell \geq 1$, such that
\begin{equation*}
	\sum_{\ell \geq 0} \beta_\ell \cdot \beta_{13}(\xi) = \beta_{13}(\xi) \quad (\xi \neq 0).  
\end{equation*}
  We further set $\tilde{\beta}_\ell(\xi) = \beta_{\ell-1}(\xi) + \beta_{\ell}(\xi) + \beta_{\ell+1}(\xi)$.
As in the previous section, we have the following lemma by Littlewood-Paley theory and Minkowski's inequality:
\begin{lemma}  
\label{lem:LittlewoodPaleySingularMultiplier}
Let $1<p \leq 2 \leq q < \infty$, $r_1 \in \{1,p\}$, and $r_2 \in \{q,\infty\}$. Suppose that
\begin{equation}
\label{eq:UniformEstimateMultiplier}
 \left\| \int_{\R^3}\frac{\hat{f}(\xi) \beta_\ell(\xi) e^{ix.\xi}}{p(\xi) + i \delta} d \xi
 \right\|_{L^{q,r_2}(\R^3)} \leq C \| \tilde{\beta}_\ell(D) f \|_{L^{p,r_1}(\R^3)}
\end{equation}
holds for $C$ independent of $\ell$ and $\delta \neq 0$. Then we have
\begin{equation*}
\| A_\delta f \|_{L^{q,r_2}(\R^3)} \lesssim \| f \|_{L^{p,r_1}(\R^3)}.
\end{equation*}
 \end{lemma}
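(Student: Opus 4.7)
The plan is to mirror the strategy used in Lemma \ref{lem:LittlewoodPaley} in the previous subsection, exploiting the fact that the frequency-localized pieces $A_{\delta,\ell}f$ defined by $(A_{\delta,\ell}f)\widehat{\,}(\xi) = \beta_\ell(\xi)(\tilde p(\xi)+i\delta)^{-1}\hat f(\xi)$ have Fourier support in the dyadic annulus where $\tilde\beta_\ell\equiv 1$, so that $A_{\delta,\ell}f = A_{\delta,\ell}(\tilde\beta_\ell(D)f)$.

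First, I would express $A_\delta f = \sum_{\ell\geq 0} A_{\delta,\ell} f$ and apply the vector-valued Littlewood--Paley square function estimate on Lorentz spaces (see, e.g., \cite[Lemma~3.2]{JeongKwonLee2016}, already used in the proof of Lemma~\ref{lem:LittlewoodPaley}): since $q>1$ and the $A_{\delta,\ell}f$ are frequency-localized to almost-disjoint dyadic annuli,
\[
  \|A_\delta f\|_{L^{q,r_2}(\R^3)}
  \lesssim \Bigl\| \Bigl(\sum_{\ell\geq 0} |A_{\delta,\ell} f|^2\Bigr)^{1/2}\Bigr\|_{L^{q,r_2}(\R^3)}.
\]
Next, because $q\geq 2$ the space $L^{q/2,r_2/2}$ (interpreted suitably when $r_2=\infty$) is normable, so Minkowski's inequality for the $\ell^1$-norm in $L^{q/2,r_2/2}$ applied to $|A_{\delta,\ell}f|^2$ yields
\[
  \Bigl\| \Bigl(\sum_{\ell\geq 0} |A_{\delta,\ell} f|^2\Bigr)^{1/2}\Bigr\|_{L^{q,r_2}(\R^3)}
  \leq \Bigl(\sum_{\ell\geq 0} \|A_{\delta,\ell} f\|_{L^{q,r_2}(\R^3)}^2\Bigr)^{1/2}.
\]

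Then I would insert the hypothesis \eqref{eq:UniformEstimateMultiplier} together with $A_{\delta,\ell}f = A_{\delta,\ell}(\tilde\beta_\ell(D)f)$ to bound the right-hand side by $C\bigl(\sum_{\ell} \|\tilde\beta_\ell(D)f\|_{L^{p,r_1}(\R^3)}^2\bigr)^{1/2}$. Since $p\leq 2$, the dual square function/Minkowski step goes the other way:
\[
  \Bigl(\sum_{\ell\geq 0} \|\tilde\beta_\ell(D)f\|_{L^{p,r_1}(\R^3)}^2\Bigr)^{1/2}
  \lesssim \Bigl\|\Bigl(\sum_{\ell\geq 0}|\tilde\beta_\ell(D)f|^2\Bigr)^{1/2}\Bigr\|_{L^{p,r_1}(\R^3)}
  \lesssim \|f\|_{L^{p,r_1}(\R^3)},
\]
where the last inequality is again the Littlewood--Paley square function estimate on $L^{p,r_1}$ (in the reverse direction), which holds since $1<p<\infty$ and the $\tilde\beta_\ell$ form a Littlewood--Paley-type partition of unity (with finite overlap). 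Chaining the four estimates gives the claim.

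The only delicate point is justifying the two square function steps in Lorentz spaces and ensuring the Minkowski-in-$\ell^2$ step works at the endpoints $r_1=1$ and $r_2=\infty$; this is exactly the situation already handled by the reference \cite[Lemma~3.2]{JeongKwonLee2016} invoked in the proof of Lemma~\ref{lem:LittlewoodPaley}, and the restriction $1<p\leq 2\leq q<\infty$ in the statement guarantees normability of the relevant spaces. No new ideas beyond those appearing in Lemma~\ref{lem:LittlewoodPaley} are needed.
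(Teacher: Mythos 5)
Your proposal is correct and follows exactly the route the paper indicates: the paper states Lemma~\ref{lem:LittlewoodPaleySingularMultiplier} with the remark ``As in the previous section, we have the following lemma by Littlewood-Paley theory and Minkowski's inequality,'' i.e.\ by reproducing the argument of Lemma~\ref{lem:LittlewoodPaley}, which is precisely what you do. The chain of steps---Lorentz-space square function (cf.\ \cite[Lemma~3.2]{JeongKwonLee2016}), Minkowski in $L^{q/2,r_2/2}$ using $q\geq 2$, the frequency localization $A_{\delta,\ell}f=A_{\delta,\ell}(\tilde\beta_\ell(D)f)$ plus the hypothesis~\eqref{eq:UniformEstimateMultiplier}, then the dual Minkowski and square function estimates on the $L^{p,r_1}$ side using $p\leq 2$---matches the paper's proof of Lemma~\ref{lem:LittlewoodPaley} line for line. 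One small point worth flagging (present in the paper's statement as well, not just your write-up): the normability argument for Minkowski requires $q>2$ when $r_2=\infty$ (the paper's Lemma~\ref{lem:LittlewoodPaley} indeed assumes $q>2$ strictly), so at the boundary $q=2$, $r_2=\infty$ and dually $p=2$, $r_1=1$ the argument as given does not close; this is harmless in context since the lemma is only invoked for $p<2<q$.
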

  
 We prove \eqref{eq:UniformEstimateMultiplier} for $\ell=0$ and see how the remaining estimates follow by
 rescaling as in the previous subsection. In the first step we localize to the singular set: Let
\begin{equation*}
\beta_0 = \beta_0 ( \beta_{01} + \beta_{02}), \quad \beta_{0i} \in C^\infty_c(\R^3)
\end{equation*}
with 
\begin{align*}
\text{supp }(\beta_{01}) &\subseteq \{ \xi \in \R^3 : \, |\xi_3| \sim |\xi'| \}, \\
\text{supp }(\beta_{02}) &\subseteq \{ \xi \in \R^3 : \, |\xi_3| \ll |\xi'|, \, |\xi'| \ll |\xi_3| \}.  
\end{align*}
We start with noting that in the support of $\beta_0 \beta_{02}$ we find $|p(\xi)| \gtrsim c^2$ and uniform boundedness of
\begin{equation*}
\left\| \int_{\R^3}\frac{\hat{f}(\xi) e^{ix.\xi} \beta_0 \beta_{02} }{p(\xi) + i \delta} d\xi \right\|_{L^q(\R^3)} \lesssim \| f \|_{L^p(\R^3)}
\end{equation*}  
is immediate from Young's inequality as the kernel is a Schwartz function.\\
We turn to the estimate of the contribution close to the vanishing set of $p$: Let $\chi(\xi) = \beta_0(\xi) \beta_{01}(\xi)$. We follow the arguments of Section \ref{subsection:UniformEstimatesSingularMultiplierI}: We decompose
\begin{equation*}
\frac{\chi(\xi)}{p(\xi) + i \delta} = \mathfrak{R}(\xi) + i \mathfrak{I}(\xi).
\end{equation*}
These multipliers will be estimated by Fourier restriction-extension estimates for the level sets of $p$
given by Theorem~\ref{thm:GeneralizedBochnerRieszEstimates} for $k=d-2=1$, $\alpha=1$. To carry out the
program of Section \ref{subsection:UniformEstimatesSingularMultiplierI}, we need to change to generalized polar coordinates $\xi = \xi(p,q)$ in $\text{ supp }(\chi)$. We can suppose that this is possible as $|\nabla p(\xi)| \gtrsim c > 0$ for $p(\xi) = 0$, $|\xi| \sim c$, after making the support of $\beta_{01}$ closer to the characteristic set, if necessary.
Furthermore, with graph parametrizations $(\xi',\psi(\xi'))$ of $\{ \xi \in \text{supp}(\chi) \, : \, p(\xi) =
t\} $ uniform in $t \in (-t_0,t_0)$, $t_0$ chosen small enough, Theorem \ref{thm:GeneralizedBochnerRieszEstimates} yields uniform bounds. Also note that Lemma \ref{lem:CSum} applies with $k=1$. This finishes the proof of \eqref{eq:UniformEstimateSingular} for $\ell=0$. We show the bounds for $\ell \geq 1$ by rescaling. A change of variables gives
\begin{equation}
\label{eq:RescalingMultiplier}
\begin{split}
&\quad  \int_{\R^3}\frac{e^{ix.\xi} \beta_\ell(\xi) \hat{f}(\xi)}{p(\xi) + i \delta} d\xi \\
&= 2^{-3 \ell} \int_{\R^3}\frac{e^{i2^{-\ell} x.\zeta} \beta_0(\zeta) \hat{f}(2^{-\ell} \zeta)}{2^{-2\ell} \zeta_3^2 - 2^{-2\ell} \zeta_1^2 - 2^{-2\ell} \zeta_2^2 + g(2^{-\ell} \zeta) + i\delta} d\zeta \qquad (\zeta = 2^\ell \xi) \\
&= 2^{-\ell} \int_{\R^3}\frac{e^{i 2^{-\ell} x.\zeta} \beta_0(\zeta) \hat{f}_\ell(\zeta)}{\zeta_3^2 - \zeta_1^2 -\zeta_2^2 +2^{2\ell} g(2^{-\ell} \zeta) + i 2^{2\ell} \delta} d\zeta \qquad (\hat{f}_\ell(\zeta) = \hat{f}(2^{-\ell} \zeta)).
\end{split}
\end{equation}
Let $p_\ell(\zeta ) = \zeta_3^2 - \zeta_1^2 -\zeta_2^2 +2^{2\ell} g(2^{-\ell} \zeta)$, $\delta_{\ell} = 2^{2 \ell} \delta$. Recall that $| \partial^\alpha g(\xi) | \lesssim |\xi|^{3-|\alpha|}$, which previously allowed to carry out the proof for $\ell=0$ for $c$ chosen small enough depending on finitely many $C_\alpha$ in \eqref{eq:ApproximateCone}. Furthermore, we find
\begin{equation}
\label{eq:EstimateUniformK}
\| \int_{\R^3}\frac{e^{ix.\xi} \beta_0(\zeta) \hat{h}(\zeta)}{p_\ell(\zeta) + i \delta_\ell } d\zeta \|_{L^q(\R^3)} \lesssim \| h \|_{L^p(\R^3)}
\end{equation}  
with implicit constant independent of $\ell \geq 1$ choosing $c$ small enough depending only on finitely many $C_\alpha$. Hence, taking \eqref{eq:RescalingMultiplier} and \eqref{eq:EstimateUniformK} together, gives
\begin{equation*}
\begin{split}
\big\| \int_{\R^3}\frac{e^{ix.\xi} \beta_\ell(\xi) \hat{f}(\xi)}{p(\xi) + i \delta} d\xi \big\|_{L^q(\R^3)} 
&\lesssim 2^{\frac{3\ell}{q} -\ell} \big\| \int_{\R^3}\frac{e^{i x.\zeta} \beta_0(\zeta) \hat{f}_\ell(\zeta)}{p_\ell(\zeta) + i \delta_\ell} d\zeta \big\|_{L^q(\R^3)} \\
&\lesssim 2^{\frac{3\ell}{q} -\ell} \| f_\ell \|_{L^p(\R^3)} \\
&\lesssim 2^{2\ell - \frac{3\ell}{p} + \frac{3\ell}{q}} \| f \|_{L^p(\R^3)}.
\end{split}
\end{equation*}
Hence, Lemma \ref{lem:LittlewoodPaleySingularMultiplier} applies for $p \neq 1$ and $q \neq \infty$ because for our choice of $p$ and $q$ we find $\frac{2}{3} \leq \frac{1}{p} - \frac{1}{q}$. For $q = \infty$ or $p=1$, we use that frequencies are compactly supported to reduce to $p \neq 1$ and $q \neq \infty$ like at the end of the proof of Proposition \ref{prop:GeneralizedConeMultiplier}. The proof of Proposition \ref{prop:SingularEstimate} is complete. \qed

\section*{Acknowledgements}
Funded by the Deutsche Forschungsgemeinschaft (DFG, German Research Foundation) - Project-ID 258734477 - SFB 1173. We would like to thank Roland Schnaubelt (KIT) for useful remarks on the invariance of Maxwell's equations in media under change of basis. The involved computations for the Fresnel surface and its visualization in Figure \ref{fig:Fresnel} were performed by using MAPLE\textsuperscript{TM}.

\section*{Note on published version}

This version of the article has been accepted for publication, after peer review (when applicable)
but is not the Version of Record and does not reflect post-acceptance improvements, or any
corrections. The Version of Record is available online at: http://dx.doi.org/10.1007/s00023-021-01144-y.

\end{document}